\providecommand{\U}[1]{\protect\rule{.1in}{.1in}}
\let\oldmathbf\mathbf
\renewcommand{\mathbf}[1]{\boldsymbol{\oldmathbf{#1}}}
\newtheorem{theorem}{Theorem}
\newtheorem{corollary}[theorem]{Corollary}
\newtheorem{definition}[theorem]{Definition}
\newtheorem{lemma}[theorem]{Lemma}
\newtheorem{proposition}[theorem]{Proposition}
\newtheorem{remark}[theorem]{Remark}
\begin{document}
\title{Discrepancy for convex bodies with isolated flat points}
\author[L. Brandolini]{Luca Brandolini}
\address{Dipartimento di Ingegneria Gestionale, dell'Informazione e della Produzione,
Universit\`a degli Studi di Bergamo, Viale Marconi 5, Dalmine BG, Italy}
\email{luca.brandolini@unibg.it}
\author[L. Colzani]{Leonardo Colzani}
\address{Dipartimento di Matematica e Applicazioni, Universit\`a di Milano-Bicocca, Via
Cozzi 55, Milano, Italy}
\email{leonardo.colzani@unimib.it}
\author[B. Gariboldi]{Bianca Gariboldi}
\address{Dipartimento di Ingegneria Gestionale, dell'Informazione e della Produzione,
Universit\`a degli Studi di Bergamo, Viale Marconi 5, Dalmine BG, Italy}
\email{biancamaria.gariboldi@unibg.it}
\author[G. Gigante]{Giacomo Gigante}
\address{Dipartimento di Ingegneria Gestionale, dell'Informazione e della Produzione,
Universit\`a degli Studi di Bergamo, Viale Marconi 5, Dalmine BG, Italy}
\email{giacomo.gigante@unibg.it}
\author[G. Travaglini]{Giancarlo Travaglini}
\address{Dipartimento di Matematica e Applicazioni, Universit\`a di Milano-Bicocca, Via
Cozzi 55, Milano, Italy}
\email{giancarlo.travaglini@unimib.it}
\subjclass{11H06, 42B05}
\keywords{Discrepancy, Integer points, Fourier analysis}
\date{}

\begin{abstract}
We consider the discrepancy of the integer lattice with respect to the
collection of all translated copies of a dilated convex body having a finite
number of flat, possibly non-smooth, points in its boundary. We estimate the
$L^{p}$ norm of the discrepancy with respect to the translation variable as
the dilation parameter goes to infinity. If there is a single flat point with
normal in a rational direction we obtain an asymptotic expansion for this
norm. Anomalies may appear when two flat points have opposite normals. When
all the flat points have normals in generic irrational directions, we obtain a
smaller discrepancy. Our proofs depend on careful estimates for the Fourier
transform of the characteristic function of the convex body.

\end{abstract}
\maketitle

\section{Introduction}

Let $B$ be a convex body in $\mathbb{R}^{d}$, that is a convex bounded set
with nonempty interior, and for every $R>1$ and $\mathbf{z}\in\mathbb{R}^{d}$
let%
\[
D_{R}\left(  \mathbf{z}\right)  =-R^{d}\left\vert B\right\vert +\sum
_{\mathbf{m}\in\mathbb{Z}^{d}}\chi_{RB}\left(  \mathbf{z}+\mathbf{m}\right)
\]
be the discrepancy between the number of integer points inside a dilated and
translated copy of $B$ and its volume. The function $\mathbf{z\mapsto}%
D_{R}\left(  \mathbf{z}\right)  $ is periodic and a straightforward
computation shows that it has the Fourier expansion%
\begin{equation}
\sum_{\mathbf{m}\in\mathbb{Z}^{d}\setminus\left\{  \mathbf{0}\right\}  }%
R^{d}\widehat{\chi}_{B}\left(  R\mathbf{m}\right)  e^{2\pi i\mathbf{m}%
\cdot\mathbf{z}} \label{Fourier discrep}%
\end{equation}
where $\widehat{\chi}_{B}\left(  \mathbf{\zeta}\right)  $ denotes the Fourier
transform of $\chi_{B}\left(  \mathbf{z}\right)  $, that is%
\[
\widehat{\chi}_{B}\left(  \mathbf{\zeta}\right)  =\int_{B}e^{-2\pi
i\mathbf{\zeta}\cdot\mathbf{z}}d\mathbf{z}.
\]
The size of $D_{R}\left(  \mathbf{z}\right)  $ as $R\rightarrow+\infty$ is
therefore closely connected to the decay of $\widehat{\chi}_{B}\left(
\mathbf{\zeta}\right)  $ as $\left\vert \mathbf{\zeta}\right\vert
\rightarrow+\infty$. For example, if the boundary of $B$ is smooth and has
everywhere positive Gaussian curvature then $\widehat{\chi}_{B}\left(
\mathbf{\zeta}\right)  $ has the decay%
\begin{equation}
\left\vert \widehat{\chi}_{B}\left(  \mathbf{\zeta}\right)  \right\vert
\leqslant c\left\vert \mathbf{\zeta}\right\vert ^{-\frac{d+1}{2}}
\label{decay pos Gauss}%
\end{equation}
(see \cite[Chapter 8]{Stein}), and it can be shown that this rate of decay is
optimal. Under the assumption (\ref{decay pos Gauss}), in \cite[Corollary
3]{BCGT} the authors proved the following estimates for the $L^{p}$ norm of
the discrepancy function%
\begin{equation}
\left(  \int_{\mathbb{T}^{d}}\left\vert D_{R}\left(  \mathbf{z}\right)
\right\vert ^{p}d\mathbf{z}\right)  ^{1/p}\leqslant\left\{
\begin{array}
[c]{ll}%
cR^{\frac{d-1}{2}} & 1\leqslant p<2d/\left(  d-1\right)  ,\\
cR^{\frac{d-1}{2}}\log^{\frac{d-1}{2d}}\left(  R\right)  & p=2d/\left(
d-1\right)  ,\\
cR^{\frac{d\left(  d-1\right)  }{\left(  d+1\right)  }\left(  1-\frac{1}%
{p}\right)  } & p>2d/\left(  d-1\right)  .
\end{array}
\right.  \label{Lp-discrep}%
\end{equation}
In \cite[Theorem 5]{BCGT} it has also been shown that the above estimates are
sharp in the range $1\leqslant p<2d/\left(  d-1\right)  $. More precisely,
using the asymptotic expansion for $\widehat{\chi}_{B}\left(  \mathbf{\zeta
}\right)  $, it has been proved that when $B$ is not symmetric about a point
or $d\not \equiv 1\left(  \operatorname{mod}4\right)  $ one has, for every
$p\geqslant1$,%
\[
\left(  \int_{\mathbb{T}^{d}}\left\vert D_{R}\left(  \mathbf{z}\right)
\right\vert ^{p}d\mathbf{z}\right)  ^{1/p}\geqslant cR^{\frac{d-1}{2}}.
\]
On the other hand, when $B$ is symmetric about a point and $d\equiv1\left(
\operatorname{mod}4\right)  $,%
\begin{align*}
\limsup_{R\rightarrow+\infty}R^{-\frac{d-1}{2}}\left(  \int_{\mathbb{T}^{d}%
}\left\vert D_{R}\left(  \mathbf{z}\right)  \right\vert ^{p}d\mathbf{z}%
\right)  ^{1/p}  &  >0~~~~\text{for every }p\geqslant1,\\
\liminf_{R\rightarrow+\infty}R^{-\frac{d-1}{2}}\left(  \int_{\mathbb{T}^{d}%
}\left\vert D_{R}\left(  \mathbf{z}\right)  \right\vert ^{p}d\mathbf{z}%
\right)  ^{1/p}  &  =0~~~~\text{for every }p<\frac{2d}{d-1}.
\end{align*}

Up to now we have considered the case of positive Gaussian curvature.

When the Gaussian curvature of the boundary of $B$ vanishes at some point the
estimate (\ref{decay pos Gauss}) fails and the rate of decay depends on the direction.

More precisely the decay of the Fourier transform (\ref{decay pos Gauss})
holds in a given direction $\Theta$ if the Gaussian curvature does not vanish
at the points on the boundary of $B$ where the normal is $\pm\Theta$. When the
curvature vanishes the rate of decay of $\widehat{\chi}_{B}\left(  \rho
\Theta\right)  $ can be significantly smaller. We will see that in this case
the behavior of the $L^{p}$ norms of the discrepancy function may differ from
the case of positive Gaussian curvature.

To the authors' knowledge the discrepancy for convex bodies with vanishing
Gaussian curvature has been considered only for specific classes of convex
bodies and only for $L^{\infty}$ estimates. See e.g. \cite{CdV}, \cite{Guo1},
\cite{Guo2}, \cite{Guo3}, \cite{ISS}, \cite{K}, \cite{Kr}, \cite{R1},
\cite{R2}. See also \cite{K} for an estimate from below of the $L^{2}$
discrepancy associated to the curve $x^{2}+y^{4}=1$. \bigskip

Throughout the paper we will use bold symbols only for $d$-dimensional points
and non-bold symbol for lower dimensional points. Moreover when we write a
point $\mathbf{z}=(x,t)$ or $\mathbf{\zeta}=\left(  \xi,s\right)  $ we agree
that $x,\xi\in\mathbb{R}^{d-1}$ and $t,s\in\mathbb{R}$.\bigskip

We are happy to thank Gabriele Bianchi for some interesting remarks on the
geometric properties of the convex bodies considered in this paper (see
\cite{Bi}).

\section{Statements of the results}

In this paper we study the $L^{p}$ norms of the discrepancy function
associated to a convex body whose boundary has a finite number of isolated
flat points. The relevant example is a convex body $B$ such that $\partial B$
has everywhere positive Gaussian curvature except at the origin and such that,
in a neighborhood of the origin, $\partial B$ is the graph of the function
$t=\left\vert x\right\vert ^{\gamma}$, with $x\in\mathbb{R}^{d-1}$ and some
$\gamma>2$. This function is smooth at the origin only when $\gamma$ is a
positive even integer, and the geometric control of the Fourier transform in
\cite{BNW} does not apply directly.

We are actually interested in a larger class of convex bodies and this is why
we introduce the following definition.

\begin{definition}
Let $U$ be a bounded open neighborhood of the origin in $\mathbb{R}^{d-1}$,
let $\Phi\in C^{\infty}\left(  U\setminus\left\{  0\right\}  \right)  $ and
let $\gamma>1$. For every $x\in U\setminus\left\{  0\right\}  $ let $\mu
_{1}\left(  x\right)  ,\ldots,\mu_{d-1}\left(  x\right)  $ be the eigenvalues
of the Hessian matrix of $\Phi$. We say that $\Phi\in S_{\gamma}\left(
U\right)  $ if for $j=1,\ldots,d-1,$%
\begin{equation}
0<\inf_{x\in U\setminus\left\{  0\right\}  }\left\vert x\right\vert
^{2-\gamma}\mu_{j}\left(  x\right)  \label{Eigenvalues}%
\end{equation}
and, for every multi-index $\alpha,$%
\begin{equation}
\sup_{x\in U\setminus\left\{  0\right\}  }\left\vert x\right\vert ^{\left\vert
\alpha\right\vert -\gamma}\left\vert \frac{\partial^{\left\vert \alpha
\right\vert }\Phi}{\partial x^{\alpha}}\left(  x\right)  \right\vert <+\infty.
\label{Deriv-Phi}%
\end{equation}

\end{definition}

Observe that if $\Phi\in S_{\gamma}\left(  U\right)  $ then for some
$c_{1},c_{2}>0$,%
\[
c_{1}\left\vert x\right\vert ^{\gamma-2}\leqslant\mu_{j}\left(  x\right)
\leqslant c_{2}\left\vert x\right\vert ^{\gamma-2}.
\]

Moreover, since $\gamma>1,$ we have $\Phi\in C^{1}\left(  U\right)  $.

\begin{definition}
Let $B$ be a convex body in $\mathbb{R}^{d}$ and let $\mathbf{z}\in\partial B$
and let $\gamma>1$. We say that $\mathbf{z}$ is an isolated flat point of
order $\gamma$ if, in a neighborhood of $\mathbf{z}$ and in a suitable
Cartesian coordinate system with the origin in $\mathbf{z}$, $\partial B$ is
the graph of a function $\Phi\in S_{\gamma}\left(  U\right)  $, as in the
previous definition.
\end{definition}

Convex bodies with flat points can be easily constructed by taking powers of
strictly convex functions.

\begin{proposition}
\label{Prop H}Let $U$ be a bounded open neighborhood of the origin in
$\mathbb{R}^{d-1}$, let $H\in C^{\infty}\left(  U\right)  $ such that
$H\left(  0\right)  =0$, $\nabla H\left(  x\right)  =0$ and assume that its
Hessian matrix is positive definite at the origin. Let $\gamma>1$. Then the
function $\Phi\left(  x\right)  =$ $\left[  H\left(  x\right)  \right]
^{\gamma/2}\in S_{\gamma}\left(  U\right)  $.
\end{proposition}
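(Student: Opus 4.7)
Write $\Phi = f\circ H$ with $f(t)=t^{\gamma/2}$. Since $H(0)=0$, $\nabla H(0)=0$, and $A:=D^{2}H(0)$ is positive definite, Taylor's theorem with integral remainder shows that, on a possibly smaller neighborhood of the origin (still denoted $U$), one has
$c_{1}|x|^{2}\leq H(x)\leq c_{2}|x|^{2}$ and $|\partial^{\beta}H(x)|\leq C_{\beta}|x|^{(2-|\beta|)_{+}}$ for every multi-index $\beta$. In particular $H>0$ on $U\setminus\{0\}$, so $\Phi = H^{\gamma/2}$ is well defined and smooth there.

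To verify (\ref{Deriv-Phi}), I apply Fa\`a di Bruno's formula: every term of $\partial^{\alpha}\Phi$ is a constant multiple of $f^{(k)}(H)\prod_{j=1}^{k}\partial^{\beta_{j}}H$ with $\beta_{1}+\cdots+\beta_{k}=\alpha$ and each $|\beta_{j}|\geq 1$. The estimates $|f^{(k)}(H(x))|\leq C\,H(x)^{\gamma/2-k}\leq C|x|^{\gamma-2k}$ and $|\partial^{\beta_{j}}H(x)|\leq C|x|^{(2-|\beta_{j}|)_{+}}$ combine to a bound $C|x|^{\gamma-2k+\ell}$, where $\ell=\#\{j:|\beta_{j}|=1\}$. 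Since $|\alpha|=\sum|\beta_{j}|\geq \ell + 2(k-\ell) = 2k-\ell$, the exponent satisfies $\gamma-2k+\ell\geq \gamma-|\alpha|$; since $U$ is bounded, this yields $|\partial^{\alpha}\Phi(x)|\leq C|x|^{\gamma-|\alpha|}$ as required.

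For the eigenvalue condition (\ref{Eigenvalues}), direct computation gives, for any unit $v\in\mathbb{R}^{d-1}$,
\[
v^{T}D^{2}\Phi(x)\,v=\tfrac{\gamma}{2}\,H^{\gamma/2-2}\Bigl[\bigl(\tfrac{\gamma}{2}-1\bigr)(v\cdot\nabla H)^{2}+H\,v^{T}D^{2}H\,v\Bigr].
\]
If $\gamma\geq 2$ both terms in the bracket are nonnegative and the second is $\geq c|x|^{2}$, giving the required $\geq c|x|^{\gamma-2}$. The delicate case, and the main obstacle, is $1<\gamma<2$: the rank-one term is negative and of the same order in $|x|$ as the positive term. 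To close the estimate, I use the integral representations $\nabla H(x)=\bigl(\int_{0}^{1}D^{2}H(tx)\,dt\bigr)x$ and $H(x)=x^{T}\bigl(\int_{0}^{1}(1-t)D^{2}H(tx)\,dt\bigr)x$, combined with Cauchy--Schwarz in the inner product induced by $A$, to deduce
\[
(v\cdot\nabla H(x))^{2}\leq 2H(x)\,v^{T}D^{2}H(x)\,v\,(1+o(1))\qquad\text{as }|x|\to 0.
\]
Plugging this back yields a lower bound of $(\gamma-1+o(1))\,H\,v^{T}D^{2}H\,v$ for the bracket, which is strictly positive and of order $|x|^{2}$ precisely because $\gamma>1$. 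Thus $v^{T}D^{2}\Phi(x)\,v\geq c|x|^{\gamma-2}$ uniformly in $v$, and since the minimum eigenvalue of $D^{2}\Phi(x)$ equals $\inf_{|v|=1}v^{T}D^{2}\Phi(x)\,v$, this concludes the proof.
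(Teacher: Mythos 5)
Your proposal is correct and follows essentially the same route as the paper: a Fa\`a di Bruno--type expansion with the bounds $c_{1}|x|^{2}\leqslant H(x)\leqslant c_{2}|x|^{2}$ and $|\partial^{\beta}H(x)|\leqslant C|x|^{(2-|\beta|)_{+}}$ for condition (\ref{Deriv-Phi}), and for (\ref{Eigenvalues}) the same factorization of the Hessian quadratic form together with Cauchy--Schwarz in the inner product induced by $A=\operatorname{Hess}H(0)$, which produces the crucial factor $\gamma-1>0$ in the delicate range $1<\gamma<2$. No gaps.
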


We have already observed that some of the results in this paper for the
singularity $\left\vert x\right\vert ^{\gamma}$ with $\gamma$ even integer are
not new. However observe that $\left\vert x\right\vert ^{2n}$ is analytic,
while the above definition does not imply that the boundary is smooth. For
example, in dimension $2$ consider a singularity of the kind $\Phi
^{\prime\prime}\left(  x\right)  =2+\sin\left(  \log\left(  \left\vert
x\right\vert \right)  \right)  $.

Interestingly, in the following Proposition \ref{Prop Stima chi} concerning te
decay of the Fourier transform of a convex body with a flat point of order
$\gamma$, the case $\gamma=2$ with non smooth flat points requires some extra care.

The discrepancy for convex bodies with flat points in the above class is
described by the following theorem.

\begin{theorem}
\label{Theorem A}Let $B$ be a bounded convex body in $\mathbb{R}^{d}$. Assume
that $\partial B$ is smooth with everywhere positive Gaussian curvature except
for a finite number of isolated flat points of order at most $\gamma$.

1) For $1<\gamma\leqslant2$ we have%
\[
\left(  \int_{\mathbb{T}^{d}}\left\vert D_{R}\left(  \mathbf{z}\right)
\right\vert ^{p}d\mathbf{z}\right)  ^{1/p}\leqslant\left\{
\begin{array}
[c]{ll}%
cR^{\frac{d-1}{2}} & 1\leqslant p<2d/\left(  d-1\right)  ,\\
cR^{\frac{d-1}{2}}\log^{\frac{d-1}{2d}}\left(  R\right)  & p=2d/\left(
d-1\right)  ,\\
cR^{\frac{d\left(  d-1\right)  }{\left(  d+1\right)  }\left(  1-\frac{1}%
{p}\right)  } & p>2d/\left(  d-1\right)  .
\end{array}
\right.
\]

2) For $2<\gamma\leqslant d+1$ we have%
\[
\left(  \int_{\mathbb{T}^{d}}\left\vert D_{R}\left(  \mathbf{z}\right)
\right\vert ^{p}d\mathbf{z}\right)  ^{1/p}\leqslant\left\{
\begin{array}
[c]{cl}%
cR^{\left(  d-1\right)  \left(  1-\frac{1}{\gamma}\right)  } & 1\leqslant
p\leqslant\left(  2d\right)  /\left(  d+1-\gamma\right) \\
cR^{\frac{d\left(  d-1\right)  }{d+1}\left(  1-\frac{2}{\gamma p}\right)  } &
p>\left(  2d\right)  /\left(  d+1-\gamma\right)
\end{array}
\right.
\]

3) For $\gamma>d+1$ and every $p\geqslant1$ we have%
\[
\left(  \int_{\mathbb{T}^{d}}\left\vert D_{R}\left(  \mathbf{z}\right)
\right\vert ^{p}d\mathbf{z}\right)  ^{1/p}\ \leqslant cR^{\left(  d-1\right)
\left(  1-\frac{1}{\gamma}\right)  }.
\]

\end{theorem}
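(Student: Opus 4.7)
The proof rests on the Fourier expansion (\ref{Fourier discrep}) together with the directional decay of $\widehat{\chi}_B$ that the forthcoming Proposition~\ref{Prop Stima chi} establishes: outside a conical neighborhood of each outward normal $\Theta_i$ at the flat points one has the generic bound $|\widehat{\chi}_B(\zeta)|\le c|\zeta|^{-(d+1)/2}$, while inside, in coordinates $(\xi,s)$ adapted to $\Theta_i=e_d$,
\[
|\widehat{\chi}_B(\xi,s)|\le c(1+|s|)^{-1}(1+|\xi|+|s|^{1/\gamma})^{-(d-1)}.
\]
Along $\Theta_i$ itself this reads $\rho^{-1-(d-1)/\gamma}$, which is strictly worse than the generic $(d+1)/2$ rate precisely when $\gamma>2$. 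A smooth partition of unity on the frequency unit sphere splits
\[
D_R=D_R^{\mathrm{gen}}+\sum_i D_R^{\mathrm{flat},i}.
\]

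Since $D_R^{\mathrm{gen}}$ is built from frequencies satisfying (\ref{decay pos Gauss}), the bounds (\ref{Lp-discrep}) from \cite{BCGT} apply to it verbatim. This already finishes Case~1: for $1<\gamma\le 2$ the exponent $1+(d-1)/\gamma$ is at least $(d+1)/2$, so the bound (\ref{decay pos Gauss}) is available globally, the pieces $D_R^{\mathrm{flat},i}$ are absorbed into $D_R^{\mathrm{gen}}$, and the three-regime estimate of Case~1 coincides with (\ref{Lp-discrep}). In Cases~2 and~3, where $\gamma>2$, each $D_R^{\mathrm{flat},i}$ must be handled directly; a rotation reduces the task to a single flat point with $\Theta=e_d$ and lattice frequencies written as $\mathbf{m}=(n,k)\in\mathbb{Z}^{d-1}\times\mathbb{Z}$.

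A dyadic decomposition of the cone via the anisotropic scaling gives, on the shell $|\mathbf{m}|\sim 2^j$, roughly $2^{j(1+(d-1)/\gamma)}$ lattice contributions each of size $(R2^j)^{-2(1+(d-1)/\gamma)}$ to the Parseval sum; the resulting geometric series in $j$ telescopes to produce
\[
\|D_R^{\mathrm{flat}}\|_2\le cR^{(d-1)(1-1/\gamma)}.
\]
On the probability space $\mathbb{T}^d$, $\|D_R\|_p\le\|D_R\|_2$ for $1\le p\le 2$, which is already the small-$p$ bound in Case~2. For $p>2$ I combine the $L^2$ estimate with the pointwise Hlawka-type bound $\|D_R\|_\infty\le cR^{d(d-1)/(d+1)}$, valid throughout Case~2 (where $\gamma\le d+1$), and interpolate
\[
\|D_R\|_p\le\|D_R\|_{p_0}^{p_0/p}\|D_R\|_\infty^{1-p_0/p}
\]
at $p_0=2d/(d+1-\gamma)$; a short calculation identifies the resulting exponent with $d(d-1)(1-2/(\gamma p))/(d+1)$ and confirms that the two regimes of Case~2 meet at $p=p_0$. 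Case~3 ($\gamma>d+1$) is different because $(d-1)(1-1/\gamma)$ now exceeds $d(d-1)/(d+1)$; here a direct count of lattice points in a boundary layer near the flat point provides the $L^\infty$ bound $cR^{(d-1)(1-1/\gamma)}$, which dominates any interpolation and gives the uniform-in-$p$ estimate.

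The principal difficulty lies not in these interpolation steps but in Proposition~\ref{Prop Stima chi}. The class $S_\gamma(U)$ allows functions $\Phi$ that are only $C^1$ at the origin (for instance the $\Phi''(x)=2+\sin\log|x|$ example noted in the excerpt), so the stationary-phase estimates of \cite{BNW} cannot be invoked verbatim. Extracting the $1+(d-1)/\gamma$ decay requires a dyadic decomposition $|x|\sim 2^{-j}$ of a neighborhood of the flat point, van der Corput estimates on each annulus that exploit the scalings (\ref{Eigenvalues}) and (\ref{Deriv-Phi}) built into $S_\gamma(U)$, and a careful summation across the annuli.
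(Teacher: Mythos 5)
Your overall architecture (frequency split into a neighborhood of the flat normals plus a generic part, Case 1 absorbed into the positive-curvature estimates of \cite{BCGT}, the hard analysis deferred to the dyadic proof of Proposition \ref{Prop Stima chi}) matches the paper. The genuine gap is in Cases 2 and 3, in how you pass from $L^2$ to the intermediate exponents. Your $L^2$ Parseval bound only yields the estimate $cR^{(d-1)(1-1/\gamma)}$ for $1\leqslant p\leqslant 2$, not on the whole claimed range $1\leqslant p\leqslant p_0=2d/(d+1-\gamma)$. The interpolation inequality you then invoke, $\Vert D_R\Vert_p\leqslant\Vert D_R\Vert_{p_0}^{p_0/p}\Vert D_R\Vert_\infty^{1-p_0/p}$, presupposes exactly the endpoint bound $\Vert D_R\Vert_{p_0}\leqslant cR^{(d-1)(1-1/\gamma)}$ that is missing. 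If instead you interpolate from the endpoints you actually have, namely $L^2$ and $L^\infty$, the exponent you obtain is
\[
\frac{2}{p}\,(d-1)\Bigl(1-\frac{1}{\gamma}\Bigr)+\Bigl(1-\frac{2}{p}\Bigr)\frac{d(d-1)}{d+1}
=\frac{d(d-1)}{d+1}\Bigl(1-\frac{2}{\gamma p}\Bigr)+\frac{2(d-1)}{p(d+1)}\Bigl(1-\frac{1}{\gamma}\Bigr),
\]
which exceeds the target exponent by a strictly positive amount for every finite $p$ (e.g.\ for $d=3$, $\gamma=3$, $p=p_0=6$ it gives $R^{13/9}$ instead of $R^{4/3}$). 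So convexity of the $L^p$ norms cannot close the range $2<p\leqslant p_0$, and the second regime of Case 2 inherits the same loss.

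The paper closes this by proving the $L^{p_0}$ endpoint directly: it mollifies the discrepancy at scale $\varepsilon$, applies Hausdorff--Young with the dual exponent $q=p_0'<2d/(d+1)$ to the frequencies away from the tube around $\Theta$ (Lemma \ref{Lemma Stime Discrep}, part 1, which uses the second and third bounds of Proposition \ref{Prop Stima chi} and gives $cR^{(d-1)/2}\varepsilon^{-(d-1)/2+d/p}$), bounds the frequencies inside the tube pointwise by $cR^{(d-1)(1-1/\gamma)}$ (part 2), and optimizes $\varepsilon=R^{-(d-1)/(d+1-2d/p)}$; at $p=p_0$ the two contributions balance at $R^{(d-1)(1-1/\gamma)}$. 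Only then does one interpolate between $p_0$ and $\infty$. You would need to add this endpoint argument (or an equivalent $L^{p_0}$ estimate). A secondary concern: in Case 3 your proposed ``direct count of lattice points in a boundary layer'' does not obviously yield $\Vert D_R\Vert_\infty\leqslant cR^{(d-1)(1-1/\gamma)}$ --- counting lattice points near $\partial(RB)$ overshoots by $R^{d-1}$, and splitting off the cap destroys the convexity of the remainder --- whereas the paper obtains the $L^\infty$ bound from the same mollified Fourier scheme with $\varepsilon=R^{-(d-1)/(d+1)}$, the flat-direction term dominating when $\gamma>d+1$.
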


The picture summarizes our estimates for the discrepancy.

\begin{center}
\begin{tikzpicture}[framed,line cap=round,line join=round,>=triangle 45,x=2.2cm,y=5cm]
\draw[->,color=black] (0,0.) -- (5.,0.);
\draw[->,color=black] (0.,0) -- (0.,1.2);
\draw [line width=0.5pt] (1.,0.4)-- (2.,0.4);
\draw [line width=0.5pt] (2.,0.4)-- (4.,0.);
\draw [line width=0.5pt] (2.,0.4)-- (2.,0.);
\draw [line width=0.5pt] (1.,0.4)-- (1.,0.);1
\draw [line width=0.5pt] (1.,1.)-- (1.,0.4);
\draw [line width=0.5pt] (1.,1.)-- (5.,1.);
\draw [line width=0.5pt] (2.,1.)-- (2.,0.4);
\draw [line width=0.5pt,dotted] (0.,0.4)-- (1.,0.4);
\draw [line width=0.5pt,dotted] (0.,1.)-- (1.,1.);
\begin{scriptsize}
\draw (1.1,0.23) node[anchor=north west] {$R^{\frac{d(d-1)}{d+1}\left(1-\frac1p\right)}$};
\draw (1.25,0.8) node[anchor=north west] {$R^{\frac{d-1}2}$};
\draw (3.,0.8) node[anchor=north west] {$R^{(d-1)\left(1-\frac 1 \gamma\right)}$};
\draw (2,0.23) node[anchor=north west] {$R^{\frac{d(d-1)}{d+1}\left(1-\frac 2{\gamma p}\right)}$};
\draw (4.7,-0.05) node {$\gamma$};
\draw (-0.35,1.2) node[anchor=north west] {$1/p$};
\draw (1.,0.) circle (0.5pt);
\draw[color=black] (1,-0.1) node {$1$};
\draw [fill=black] (2.,0.) circle (0.5pt);
\draw[color=black] (2.,-0.1) node {$2$};
\draw [fill=black] (4.,0.) circle (2.0pt);
\draw[color=black] (4,-0.1) node {$d+1$};
\draw [fill=black] (0.,1.) circle (0.5pt);
\draw[color=black] (-0.15,1) node {$1$};
\draw [fill=black] (0.,0.4) circle (0.5pt);
\draw[color=black] (-0.2,0.4) node {$\frac{d-1}{2d}$};
\draw [fill=black] (1.,0.4) circle (2.0pt);
\draw [fill=black] (2.,0.4) circle (2.0pt);
\end{scriptsize}
\end{tikzpicture}

\end{center}

The proof of the above theorem relies on precise estimate for the Fourier
transform of $\chi_{B}\left(  \mathbf{z}\right)  $. See Proposition
\ref{Prop Stima chi} below. The estimates in point 1) are the same as in
\cite{BCGT} for the case of positive Gaussian curvature and are independent of
$\gamma$. On the contrary, as we will see from the proof, in the cases 2) and
3) the flat points give the main contribution. In the case 2) if $p=+\infty$
then%
\[
\frac{d\left(  d-1\right)  }{d+1}\left(  1-\frac{2}{p\gamma}\right)
=\frac{d\left(  d-1\right)  }{d+1}.
\]
Hence, when $\gamma\leqslant d+1$ the estimates for the $L^{\infty}$
discrepancy of $B$ match Landau's estimates for the $L^{\infty}$ discrepancy
of the ball. See e.g. \cite{PST}. For $p\geq2d/\left(  d-1\right)  $ the above
result extends a theorem of Colin de Verdiere \cite{CdV}.

In the next theorem we consider convex bodies with a flat point with normal
pointing in a rational direction. In this case, some of the previous estimates
can be improved to asymptotic estimates.

\begin{theorem}
\label{Theorem B}Let $B$ be a bounded convex body in $\mathbb{R}^{d}$. Assume
that $\partial B$ is smooth with everywhere positive Gaussian curvature except
at most at two points $P$ and $Q$ with outward unit normals $-\Theta$ and
$\Theta$ which are flat of order $\gamma_{P}$ and $\gamma_{Q}$ respectively.
Let%
\[
S\left(  t\right)  =\left\vert \left\{  \mathbf{z}\in B:\mathbf{z}\cdot
\Theta=t\right\}  \right\vert
\]
be $\left(  d-1\right)  $-dimensional measure of the slices of $B$ that are
orthogonal to $\Theta$. The function $S\left(  t\right)  $ is supported in
$P\cdot\Theta\leqslant t\leqslant Q\cdot\Theta$ and is known to be smooth in
$P\cdot\Theta<t<Q\cdot\Theta$. Assume that there exist two smooth functions
$G_{P}\left(  r\right)  $ and $G_{Q}\left(  r\right)  $ with $G_{P}\left(
0\right)  \neq0$ and $G_{Q}\left(  0\right)  \neq0$ such that, for
$u\geqslant0$ sufficiently small%
\[
S\left(  P\cdot\Theta+u\right)  =u^{\frac{d-1}{\gamma_{P}}}G_{P}\left(
u^{1/\gamma_{P}}\right)
\]
and%
\[
S\left(  Q\cdot\Theta-u\right)  =u^{\frac{d-1}{\gamma_{Q}}}G_{Q}\left(
u^{1/\gamma_{Q}}\right)  .
\]
Finally, assume that the direction $\Theta$ is rational, that is $\alpha
\Theta\in\mathbb{Z}^{d}$ for some $\alpha$, and denote by $\mathbf{m}_{0}$ the
first non-zero integer point in the direction $\Theta$. Define%
\[
A_{P}\left(  \mathbf{z}\right)  =\frac{2G_{P}\left(  0\right)  \Gamma\left(
\frac{d-1}{\gamma_{P}}+1\right)  }{\left(  2\pi\left\vert \mathbf{m}%
_{0}\right\vert \right)  ^{\frac{d-1}{\gamma_{P}}+1}}\sum_{k=1}^{+\infty
}k^{-1-\frac{d-1}{\gamma_{P}}}\sin\left(  2\pi k\mathbf{m}_{0}\cdot
\mathbf{z}-\frac{\pi}{2}\frac{d-1}{\gamma_{P}}\right)  ,
\]
and%
\[
A_{Q}\left(  \mathbf{z}\right)  =-\frac{2G_{Q}\left(  0\right)  \Gamma\left(
\frac{d-1}{\gamma_{Q}}+1\right)  }{\left(  2\pi\left\vert \mathbf{m}%
_{0}\right\vert \right)  ^{\frac{d-1}{\gamma_{Q}}+1}}\sum_{k=1}^{+\infty
}k^{-1-\frac{d-1}{\gamma_{Q}}}\sin\left(  2\pi k\mathbf{m}_{0}\cdot
\mathbf{z}+\frac{\pi}{2}\frac{d-1}{\gamma_{Q}}\right)  .
\]

1) Let $\gamma_{P}>\gamma_{Q}\geqslant2$ and assume that one of the two
alternatives holds:%
\[%
\begin{array}
[c]{c}%
2<\gamma_{P}\leqslant d+1\text{ and }p<\left(  2d\right)  /\left(
d+1-\gamma_{P}\right)  ,\\
\text{or}\\
\gamma_{P}>d+1~\text{and}~p\leqslant+\infty.
\end{array}
\]
Then there exist constants $\delta>0$ and $c>0$ such that for every
$R\geqslant1$,%
\[
\left(  \int_{\mathbb{T}^{d}}\left\vert D_{R}\left(  \mathbf{z}\right)
-R^{\left(  d-1\right)  \left(  1-1/\gamma_{P}\right)  }A_{P}\left(
\mathbf{z-}RP\right)  \right\vert ^{p}d\mathbf{z}\right)  ^{1/p}\ \leqslant
cR^{\left(  d-1\right)  \left(  1-1/\gamma_{P}\right)  -\delta}.
\]
In particular, as $R\rightarrow+\infty$, we have the following asymptotic%
\[
\left(  \int_{\mathbb{T}^{d}}\left\vert D_{R}\left(  \mathbf{z}\right)
\right\vert ^{p}d\mathbf{z}\right)  ^{1/p}\sim R^{\left(  d-1\right)  \left(
1-1/\gamma_{P}\right)  }\left(  \int_{\mathbb{T}^{d}}\left\vert A_{P}\left(
\mathbf{z}\right)  \right\vert ^{p}d\mathbf{z}\right)  ^{1/p}.
\]

2) Let $\gamma_{P}=\gamma_{Q}=\gamma$ and assume that one of the two
alternatives holds:%
\[%
\begin{array}
[c]{c}%
2<\gamma\leqslant d+1\text{ and }p<\left(  2d\right)  /\left(  d+1-\gamma
\right)  ,\\
\text{or}\\
\gamma>d+1~\text{and}~p\leqslant+\infty.
\end{array}
\]
Then there exist constants $\delta>0$ and $c>0$ such that for every
$R\geqslant1$,%
\begin{align*}
&  \left(  \int_{\mathbb{T}^{d}}\left\vert D_{R}\left(  \mathbf{z}\right)
-R^{\left(  d-1\right)  \left(  1-1/\gamma\right)  }\left(  A_{P}\left(
\mathbf{z-}RP\right)  +A_{Q}\left(  \mathbf{z}-RQ\right)  \right)  \right\vert
^{p}d\mathbf{z}\right)  ^{1/p}\ \\
&  \leqslant cR^{\left(  d-1\right)  \left(  1-1/\gamma\right)  -\delta}.
\end{align*}

\end{theorem}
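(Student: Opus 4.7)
The plan is to exploit the Fourier expansion (\ref{Fourier discrep}) by splitting the sum over $\mathbf{m}\in\mathbb{Z}^{d}\setminus\{0\}$ into the arithmetic progression $\{k\mathbf{m}_{0}:k\in\mathbb{Z}\setminus\{0\}\}$ of integer points lying along the rational direction $\Theta$, and everything else. On this line, $R\mathbf{m}$ points in the slow-decay direction $\pm\Theta$ produced by the flat points $P,Q$; off it, $\widehat{\chi}_{B}(R\mathbf{m})$ decays essentially as in the positive-curvature case treated in Theorem \ref{Theorem A}. Thus the strategy is (i) to compute a precise asymptotic expansion of $\widehat{\chi}_{B}(\rho\Theta)$, (ii) to sum its leading piece along the line and identify $A_{P}, A_{Q}$, and (iii) to control the remainder.

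For step (i) I would write $\widehat{\chi}_{B}(\rho\Theta)=\int S(t)e^{-2\pi i\rho t}\,dt$, insert a smooth partition of unity isolating the endpoints $t=P\cdot\Theta$ and $t=Q\cdot\Theta$, and apply the standard one-dimensional asymptotic $\int_{0}^{\infty}u^{\alpha}\psi(u)e^{-2\pi i\rho u}\,du\sim\psi(0)\Gamma(\alpha+1)(2\pi i\rho)^{-\alpha-1}+\cdots$ at each endpoint. Combined with the hypothesized forms $S(P\cdot\Theta+u)=u^{(d-1)/\gamma_{P}}G_{P}(u^{1/\gamma_{P}})$ and $S(Q\cdot\Theta-u)=u^{(d-1)/\gamma_{Q}}G_{Q}(u^{1/\gamma_{Q}})$ and Taylor expansion of $G_{P},G_{Q}$, this yields
\[
\widehat{\chi}_{B}(\rho\Theta)=e^{-2\pi i\rho P\cdot\Theta}M_{P}(\rho)+e^{-2\pi i\rho Q\cdot\Theta}M_{Q}(\rho)+O(|\rho|^{-N}),
\]
where $M_{P},M_{Q}$ are finite asymptotic series whose leading terms are $G_{j}(0)\Gamma((d-1)/\gamma_{j}+1)(\pm 2\pi i\rho)^{-(d-1)/\gamma_{j}-1}$. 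For step (ii), substituting $\rho=\pm Rk|\mathbf{m}_{0}|$ into these leading terms, summing over $k\in\mathbb{Z}\setminus\{0\}$, pairing $k$ with $-k$ via $\widehat{\chi}_{B}(-\zeta)=\overline{\widehat{\chi}_{B}(\zeta)}$, and simplifying complex exponentials into sines using $i^{-(d-1)/\gamma_{j}-1}=e^{-i\pi((d-1)/\gamma_{j}+1)/2}$ reproduces exactly $R^{(d-1)(1-1/\gamma_{P})}A_{P}(\mathbf{z}-RP)+R^{(d-1)(1-1/\gamma_{Q})}A_{Q}(\mathbf{z}-RQ)$. In part 1 the $Q$ summand is of strictly lower order and is absorbed into the remainder; in part 2 both summands are retained.

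The main obstacle is step (iii). The sub-leading on-line contributions produce Fourier series whose coefficients decay like $(Rk)^{-(d-1)/\gamma_{P}-1-1/\gamma_{P}}$, absolutely summable in $k$ and yielding an $L^{\infty}$ gain of at least $R^{-1/\gamma_{P}}$ over the main term. The delicate piece is the off-line sum $\sum_{\mathbf{m}\notin\mathbb{Z}\mathbf{m}_{0}}R^{d}\widehat{\chi}_{B}(R\mathbf{m})e^{2\pi i\mathbf{m}\cdot\mathbf{z}}$, which I would bound by a dyadic decomposition in $|\mathbf{m}|$ and in the angular distance from the line $\mathbb{R}\Theta$, using the angular-dependent decay of $\widehat{\chi}_{B}$ supplied by Proposition \ref{Prop Stima chi}. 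Parseval handles $p=2$ directly; other values of $p$ follow by interpolation and Hausdorff--Young, as in the proof of Theorem \ref{Theorem A}. The ranges of $p$ in the hypotheses are precisely those for which excising the entire line $\mathbb{Z}\mathbf{m}_{0}$ from the sum produces a quantitative gain of $R^{-\delta}$ over the bound $R^{(d-1)(1-1/\gamma_{P})}$ of Theorem \ref{Theorem A}, so that the rational-line leading part really is the asymptotic main term.
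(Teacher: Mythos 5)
Your overall strategy is the paper's: split the Fourier expansion into the lattice points on the line $\mathbb{Z}\mathbf{m}_{0}$ and the rest, obtain the one-dimensional asymptotics of $\widehat{\chi}_{B}(\rho\Theta)$ from the section function $S(t)$ via endpoint expansions (this is Proposition \ref{Prop Sezioni} together with Lemma \ref{Lemma alpha}), resum the leading terms into $A_{P}$ and $A_{Q}$, and control the off-line sum through the anisotropic decay of Proposition \ref{Prop Stima chi}. Steps (i) and (ii) are fine as sketched.

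The gap is in step (iii), in how you pass from $p=2$ to the full range of $p$. The off-line Fourier coefficients satisfy $\bigl(\sum_{\mathbf{m}_{1}\neq\mathbf{0}}|R^{d}\widehat{\chi}_{B}(R\mathbf{m})|^{q}\bigr)^{1/q}<\infty$ only for $q>2d/(d+1)$, i.e.\ $p<2d/(d-1)$; but the theorem's hypotheses allow $p$ up to $2d/(d+1-\gamma_{P})$, and even $p=\infty$ when $\gamma_{P}>d+1$. So Hausdorff--Young applied to the raw series diverges exactly in the regime where the statement is most interesting, and there is no pair of endpoints available for interpolation. The paper's resolution is to work with the mollified discrepancy $D_{\varepsilon,R}$ of Lemma \ref{Lemma Mollif}, which converts the off-line sum into the bound $cR^{(d-1)/2}\varepsilon^{-(d-1)/2+d/p}$ of Lemma \ref{Lemma Stime Discrep} and then optimizes $\varepsilon=R^{-(d-1)/(d+1-2d/p)}$. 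The point you would still need to supply is that mollification interacts with the main term: the on-line coefficients get multiplied by $\widehat{\varphi}(\varepsilon s\mathbf{m}_{0})$, and the dilation parameter shifts from $R$ to $R\pm\varepsilon$. To keep the error below $R^{(d-1)(1-1/\gamma_{P})-\delta}$ one needs (a) a cutoff $\varphi$ with $\widehat{\varphi}-1$ vanishing to high order at the origin (Lemma \ref{Vandermonde}), giving an on-line error $cR^{(d-1)(1-1/\gamma_{P})}\varepsilon^{(d-1)/\gamma_{P}}$, and (b) the H\"older-type modulus of continuity $|A_{P}(\mathbf{z}+h)-A_{P}(\mathbf{z})|\leqslant c|h|^{(d-1)/\gamma_{P}}$, which follows from the coefficient decay $k^{-1-(d-1)/\gamma_{P}}$ and controls $Y(\mathbf{z},R\pm\varepsilon)-Y(\mathbf{z},R)$. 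Neither point appears in your outline, and without them the claimed gain of $R^{-\delta}$ over the main term is not established for the stated range of $p$.
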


Note that the series that define $A_{P}\left(  \mathbf{z}\right)  $ and
$A_{Q}\left(  \mathbf{z}\right)  $ converge uniformly and absolutely. In
particular these functions are bounded and continuous.

Observe that the asymptotic estimate of point 1) includes the case of a single
flat point, that is $\gamma_{P}>\gamma_{Q}=2$. In point 2) it is not excluded
that for particular values of $P$, $Q$, $G_{P}\left(  0\right)  $,
$G_{Q}\left(  0\right)  $ and $R$, the terms $A_{P}\left(  \mathbf{z-}%
RP\right)  $ and $A_{Q}\left(  \mathbf{z}-RQ\right)  $ may cancel each other
and the discrepancy gets smaller.

\begin{corollary}
\label{Corollary}Under the assumptions in point 2 in the previous theorem
assume furthermore that $G_{P}\left(  0\right)  =G_{Q}\left(  0\right)  $ and
that $\left(  d-1\right)  /\gamma$ is an even integer. Then for every $R$ such
that $R\mathbf{m}_{0}\cdot\left(  P-Q\right)  $ is an integer we have
\[
A_{P}\left(  \mathbf{z-}RP\right)  +A_{Q}\left(  \mathbf{z}-RQ\right)  =0.
\]
In particular with this choice of the parameters%
\[
\left(  \int_{\mathbb{T}^{d}}\left\vert D_{R}\left(  \mathbf{z}\right)
\right\vert ^{p}d\mathbf{z}\right)  ^{1/p}\leqslant cR^{\left(  d-1\right)
\left(  1-1/\gamma\right)  -\delta}.
\]

\end{corollary}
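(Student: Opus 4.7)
The plan is to treat this as a direct algebraic consequence of Theorem \ref{Theorem B}, part 2). Once I show that the two building-block functions $A_P(\mathbf{z}-RP)$ and $A_Q(\mathbf{z}-RQ)$ cancel identically under the stated parameter conditions, the claimed $L^{p}$ bound is immediate from the bound on $D_R(\mathbf{z})-R^{(d-1)(1-1/\gamma)}\left(A_P(\mathbf{z}-RP)+A_Q(\mathbf{z}-RQ)\right)$ supplied by that theorem. So the entire content of the corollary is a termwise cancellation of two trigonometric series.

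First I set $\nu=(d-1)/\gamma$, an even integer by assumption. Under the hypothesis $\gamma_P=\gamma_Q=\gamma$ and $G_P(0)=G_Q(0)$, the two prefactors in the definitions of $A_P$ and $A_Q$ are equal in absolute value and opposite in sign. I then use the elementary identities
\[
\sin\!\left(\theta-\tfrac{\pi\nu}{2}\right)=(-1)^{\nu/2}\sin\theta,\qquad \sin\!\left(\theta+\tfrac{\pi\nu}{2}\right)=(-1)^{\nu/2}\sin\theta,
\]
which hold precisely because $\nu$ is an even integer, to rewrite both sums without phase shifts. Writing $C=\dfrac{2G_P(0)\Gamma(\nu+1)}{(2\pi|\mathbf{m}_0|)^{\nu+1}}(-1)^{\nu/2}$, this reduces the definitions to
\[
A_P(\mathbf{z})=C\sum_{k=1}^{\infty}k^{-1-\nu}\sin(2\pi k\mathbf{m}_0\cdot\mathbf{z}),\qquad A_Q(\mathbf{z})=-C\sum_{k=1}^{\infty}k^{-1-\nu}\sin(2\pi k\mathbf{m}_0\cdot\mathbf{z}).
\]

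Second, I substitute $\mathbf{z}-RP$ and $\mathbf{z}-RQ$ and compare term by term. The $k$-th terms differ by a sine whose arguments differ by $2\pi k R\mathbf{m}_0\cdot(P-Q)$. By hypothesis $R\mathbf{m}_0\cdot(P-Q)\in\mathbb{Z}$, so $kR\mathbf{m}_0\cdot(P-Q)\in\mathbb{Z}$ for every $k\in\mathbb{N}$, and hence the two sines coincide. Since the overall coefficients of $A_P$ and $A_Q$ are negatives of each other, this gives $A_P(\mathbf{z}-RP)+A_Q(\mathbf{z}-RQ)=0$ pointwise, as claimed.

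Finally, having established this identity, the second assertion follows by plugging $A_P(\mathbf{z}-RP)+A_Q(\mathbf{z}-RQ)=0$ into the inequality provided by part 2) of Theorem \ref{Theorem B}: the left-hand side becomes the $L^p$ norm of $D_R(\mathbf{z})$ itself, and the right-hand side is exactly $cR^{(d-1)(1-1/\gamma)-\delta}$. There is no real obstacle here; the only subtlety is verifying the trigonometric simplification in the even-integer case, which is why the assumption that $(d-1)/\gamma$ is an even integer is essential (for odd $\nu$ one would get cosines instead of sines and the two phase shifts would no longer align with opposite signs of the prefactors).
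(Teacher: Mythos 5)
Your proof is correct and follows essentially the same route as the paper's: a termwise trigonometric cancellation of the two series using the two integrality hypotheses ($\nu/2=(d-1)/(2\gamma)\in\mathbb{Z}$ and $kR\mathbf{m}_{0}\cdot(P-Q)\in\mathbb{Z}$), followed by substitution into the estimate of Theorem \ref{Theorem B}, part 2). The paper packages the computation via the sum-to-product identity $\sin A-\sin B=2\sin\left(\tfrac{A-B}{2}\right)\cos\left(\tfrac{A+B}{2}\right)$, whereas you first strip the phase shifts $\mp\pi\nu/2$ and then invoke $2\pi$-periodicity, but the content is identical.
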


The case $\gamma=2$ is not covered by the above corollary, however observe
that for $\gamma=\left(  d-1\right)  /\left(  2k\right)  =2$, that is
$d\equiv1\left(  \operatorname{mod}\text{ }4\right)  $, and $p<2d/\left(
d-1\right)  $, one formally would obtain%
\[
\liminf_{R\rightarrow+\infty}\left\{  R^{\frac{d-1}{2}}\left(  \int
_{\mathbb{T}^{d}}\left\vert D_{R}\left(  \mathbf{z}\right)  \right\vert
^{p}d\mathbf{z}\right)  ^{1/p}\right\}  =0.
\]
Actually this is true, even if the proof is more delicate. The case of a ball
and $p=2$ has been proved by L. Parnovski and A. Sobolev in \cite{PS}%
.\ Moreover, in \cite{BCGT} it is shown that this phenomenon also occurs for
convex smooth domains with positive Gaussian curvature and $p<2d/\left(
d-1\right)  $ if and only if the domains are symmetric and $d\equiv1\left(
\operatorname{mod}\text{ }4\right)  $.

As remarked by Kendal the above $L^{p}$ estimates for the discrepancy can be
turned into almost everywhere pointwise estimates using a Borel-Cantelli type
argument. See \cite[\S 3]{K} for the proof.

\begin{proposition}
\label{Kendall}Assume that for some $\beta>0$%
\[
\left(  \int_{\mathbb{T}^{d}}\left\vert D_{R}\left(  \mathbf{z}\right)
\right\vert ^{p}\right)  ^{1/p}\leqslant\kappa R^{\beta},
\]
let $\lambda\left(  t\right)  $ be an increasing function and let
$R_{n}\rightarrow+\infty$ such that%
\[
\sum_{n=1}^{+\infty}\lambda\left(  R_{n}\right)  ^{-p}<+\infty.
\]
Then for almost every $\mathbf{z}\in\mathbb{T}^{d}$ there exists $c>0$ such
that%
\[
\left\vert D_{R_{n}}\left(  \mathbf{z}\right)  \right\vert <cR_{n}^{\beta
}\lambda\left(  R_{n}\right)  .
\]

\end{proposition}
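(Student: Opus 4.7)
The plan is to use a standard Chebyshev plus Borel--Cantelli argument, following the scheme sketched in \cite{K}. For each $n$ I would introduce the exceptional set
\[
E_{n}=\left\{\mathbf{z}\in\mathbb{T}^{d}:\left\vert D_{R_{n}}(\mathbf{z})\right\vert \geqslant R_{n}^{\beta}\lambda(R_{n})\right\}.
\]

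The first step is to estimate the measure of $E_{n}$ using the hypothesis. By Chebyshev's inequality in $L^{p}$ and the assumed upper bound,
\[
\left\vert E_{n}\right\vert \leqslant \frac{1}{R_{n}^{\beta p}\lambda(R_{n})^{p}}\int_{\mathbb{T}^{d}}\left\vert D_{R_{n}}(\mathbf{z})\right\vert^{p}d\mathbf{z}\leqslant \frac{\kappa^{p}}{\lambda(R_{n})^{p}}.
\]
Summing over $n$ and using the convergence hypothesis $\sum_{n}\lambda(R_{n})^{-p}<+\infty$ gives $\sum_{n}\left\vert E_{n}\right\vert <+\infty$, so the Borel--Cantelli lemma applies and yields $\left\vert \limsup_{n} E_{n}\right\vert =0$.

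The second step is to translate this into the pointwise statement. For almost every $\mathbf{z}\in\mathbb{T}^{d}$ there is an integer $N(\mathbf{z})$ such that $\mathbf{z}\notin E_{n}$ for all $n\geqslant N(\mathbf{z})$, i.e.\ $\left\vert D_{R_{n}}(\mathbf{z})\right\vert <R_{n}^{\beta}\lambda(R_{n})$ for all such $n$. The remaining finitely many indices $n<N(\mathbf{z})$ are absorbed into a $\mathbf{z}$-dependent constant
\[
c(\mathbf{z})=1+\max_{1\leqslant n<N(\mathbf{z})}\frac{\left\vert D_{R_{n}}(\mathbf{z})\right\vert}{R_{n}^{\beta}\lambda(R_{n})},
\]
which is finite because it is a maximum over finitely many terms (note that $\lambda(R_{n})>0$ since $\lambda^{-p}$ is summable, and $R_{n}>0$). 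With this choice one obtains $\left\vert D_{R_{n}}(\mathbf{z})\right\vert <c(\mathbf{z})R_{n}^{\beta}\lambda(R_{n})$ for every $n$, which is exactly the desired conclusion.

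There is no real obstacle here: the argument is essentially a one-line application of Chebyshev followed by Borel--Cantelli, and the only mild care needed is in combining the finitely many initial terms with the tail into a single constant depending on $\mathbf{z}$. The hypothesis that $\lambda$ is increasing is not even used in the argument as written; it would only enter if one wanted to upgrade the discrete sequence $R_{n}$ to a continuous statement in $R$, which is not what is claimed.
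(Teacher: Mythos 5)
Your argument is correct and is exactly the Chebyshev--Borel--Cantelli scheme that the paper itself invokes (it gives no proof, referring instead to Kendall's argument in \cite{K}). Your observation that the monotonicity of $\lambda$ is not needed for the discrete statement is also accurate.
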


If the flat points on the boundary of the domain $B$ have \textquotedblleft
irrational\textquotedblright\ normals then the discrepancy can be smaller than
the one described in the above theorems. In particular, we have the following
result that applies to every convex body, without curvature or smoothness assumption.

\begin{theorem}
\label{Theorem C}Let $B$ be a bounded convex body in $\mathbb{R}^{d}$ and for
$\sigma\in SO\left(  d\right)  $ denote by $D_{R,\sigma}$ the discrepancy
associated to the rotated body $\sigma B$. Then we have the following mixed
norm inequalities.

1) If $1\leqslant p\leqslant2$, we have%
\[
\left(  \int_{SO\left(  d\right)  }\left(  \int_{\mathbb{T}^{d}}\left\vert
D_{R,\sigma}\left(  \mathbf{z}\right)  \right\vert ^{p}d\mathbf{z}\right)
^{2/p}d\sigma\right)  ^{1/2}\leqslant cR^{\frac{d-1}{2}}.
\]

2) If $2\leqslant p<2d/\left(  d-1\right)  $, we have
\[
\left(  \int_{SO\left(  d\right)  }\left(  \int_{\mathbb{T}^{d}}\left\vert
D_{R,\sigma}\left(  \mathbf{z}\right)  \right\vert ^{p}d\mathbf{z}\right)
^{1/\left(  p-1\right)  }d\sigma\right)  ^{\frac{p-1}{p}}\leqslant
cR^{\frac{d-1}{2}}.
\]

\end{theorem}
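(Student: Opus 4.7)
The plan is to apply the Fourier expansion (\ref{Fourier discrep}) of the discrepancy to the rotated body $\sigma B$, using the rotation invariance $\widehat{\chi}_{\sigma B}(\mathbf{\zeta})=\widehat{\chi}_{B}(\sigma^{-1}\mathbf{\zeta})$, and then to combine an elementary inequality on the torus (Parseval for Part~1, Hausdorff--Young for Part~2) with the classical Herz-type spherical $L^{2}$ average bound
$$\int_{S^{d-1}}|\widehat{\chi}_{B}(\rho\omega)|^{2}\,d\omega \leq c\rho^{-(d+1)},$$
which holds for \emph{every} convex body $B$, with no smoothness or curvature assumption. This is precisely why Theorem \ref{Theorem C} needs no regularity of $\partial B$: averaging over $SO(d)$ replaces the (possibly failing) pointwise decay of $\widehat{\chi}_{B}$ by an always-available $L^{2}$ decay in the directional variable.

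For Part~1, since $p\leq 2$ on the probability space $\mathbb{T}^{d}$ we have $\|D_{R,\sigma}\|_{L^{p}}\leq\|D_{R,\sigma}\|_{L^{2}}$, which reduces the left-hand side to $\bigl(\int_{SO(d)}\|D_{R,\sigma}\|_{L^{2}}^{2}\,d\sigma\bigr)^{1/2}$. Parseval yields
$$\|D_{R,\sigma}\|_{L^{2}}^{2}=\sum_{\mathbf{m}\neq\mathbf{0}}R^{2d}\,|\widehat{\chi}_{B}(R\sigma^{-1}\mathbf{m})|^{2},$$
and the invariance of Haar measure on $SO(d)$ identifies $\int_{SO(d)}|\widehat{\chi}_{B}(R\sigma^{-1}\mathbf{m})|^{2}\,d\sigma$ with the uniform spherical average of $|\widehat{\chi}_{B}(R|\mathbf{m}|\omega)|^{2}$ on $S^{d-1}$. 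The spherical bound produces $c(R|\mathbf{m}|)^{-(d+1)}$, and since the exponent $d+1$ exceeds $d$ the sum $\sum_{\mathbf{m}\neq\mathbf{0}}|\mathbf{m}|^{-(d+1)}$ converges, yielding $cR^{d-1}$ and hence the required $cR^{(d-1)/2}$.

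For Part~2, a direct rewrite shows that the mixed norm in the statement equals $\bigl(\int_{SO(d)}\|D_{R,\sigma}\|_{L^{p}}^{p'}\,d\sigma\bigr)^{1/p'}$ with $p'=p/(p-1)\in(2d/(d+1),2]$, because $(p-1)/p=1/p'$ and $p\cdot 1/(p-1)=p'$. Hausdorff--Young on $\mathbb{T}^{d}$ (valid since $p\geq 2$) gives
$$\|D_{R,\sigma}\|_{L^{p}}^{p'}\leq\sum_{\mathbf{m}\neq\mathbf{0}}R^{dp'}\,|\widehat{\chi}_{B}(R\sigma^{-1}\mathbf{m})|^{p'}.$$
Integrating in $\sigma$, passing to the sphere as before, and using Jensen's inequality on the normalized sphere (permissible because $p'\leq 2$),
$$\int_{S^{d-1}}|\widehat{\chi}_{B}(\rho\omega)|^{p'}\,d\omega\leq c\Bigl(\int_{S^{d-1}}|\widehat{\chi}_{B}(\rho\omega)|^{2}\,d\omega\Bigr)^{p'/2}\leq c\rho^{-(d+1)p'/2}.$$
Collecting everything produces $cR^{p'(d-1)/2}\sum_{\mathbf{m}\neq\mathbf{0}}|\mathbf{m}|^{-(d+1)p'/2}$, and the sum converges precisely when $(d+1)p'/2>d$, i.e.\ $p<2d/(d-1)$, exactly matching the hypothesis. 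Extracting the $1/p'$ power gives $cR^{(d-1)/2}$. The entire argument is routine once the spherical $L^{2}$ average bound is granted; that bound is the single substantive tool, and it is the only step requiring genuine care (it can be proved for arbitrary convex bodies via polar coordinates and Cauchy--Schwarz, as in Herz's work).
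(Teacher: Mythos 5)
Your proposal is correct and follows essentially the same route as the paper: the paper also reduces Part~1 to the $L^{2}$ case by monotonicity of norms plus Parseval, handles Part~2 via Hausdorff--Young with $q=p/(p-1)$ followed by the $L^{q}\leq L^{2}$ average comparison (your Jensen step), and in both cases invokes the rotational $L^{2}$ average decay $\bigl(\int_{SO(d)}|\widehat{\chi}_{B}(R\sigma\mathbf{m})|^{2}d\sigma\bigr)^{1/2}\leq c(R|\mathbf{m}|)^{-(d+1)/2}$ for arbitrary convex bodies, which the paper cites from Brandolini--Hofmann--Iosevich rather than reproving. The exponent bookkeeping, including the convergence condition $q(d+1)/2>d\iff p<2d/(d-1)$, matches the paper exactly.
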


For the planar case $d=2$ we can state a slightly more precise result.

\begin{theorem}
\label{Theorem D}Let $B$ be a bounded convex body in $\mathbb{R}^{2}$. Assume
that $\partial B$ is smooth with everywhere positive curvature except a single
flat point of order $\gamma>2$. Let $\left(  \alpha,\beta\right)  $ be the
unit outward normal at the flat point and assume the Diophantine property that
for some $\delta<2/\left(  \gamma-2\right)  $ there exists $c>0$ such that for
every $n\in\mathbb{Z}$%
\[
\left\Vert n\frac{\alpha}{\beta}\right\Vert \geqslant\frac{c}{\left\vert
n\right\vert ^{1+\delta}}.
\]
Here $\left\Vert x\right\Vert $ denotes the distance of $x$ from the closest
integer. Then%
\[
\left(  \int_{\mathbb{T}^{d}}\left\vert D_{R}\left(  \mathbf{z}\right)
\right\vert ^{2}d\mathbf{z}\right)  ^{1/2}\leqslant cR^{\frac{1}{2}}.
\]

\end{theorem}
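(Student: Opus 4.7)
The natural starting point is Parseval's identity applied to the Fourier expansion \eqref{Fourier discrep},
\[
\int_{\mathbb{T}^{2}}|D_{R}(\mathbf{z})|^{2}\,d\mathbf{z}
= R^{4}\sum_{\mathbf{m}\in\mathbb{Z}^{2}\setminus\{\mathbf{0}\}}|\widehat{\chi}_{B}(R\mathbf{m})|^{2},
\]
so the theorem reduces to showing that this sum is $\leqslant cR$.

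The precise Fourier-transform estimates provided by Proposition~\ref{Prop Stima chi} are the key input. Set $\Theta=(\alpha,\beta)$ and $\Theta^{\perp}=(-\beta,\alpha)$, and write $\xi_{\parallel}=\xi\cdot\Theta$, $\xi_{\perp}=\xi\cdot\Theta^{\perp}$. Outside a thin cone around $\pm\Theta$, the smooth-boundary decay $|\widehat{\chi}_{B}(\xi)|\leqslant c|\xi|^{-3/2}$ still holds; inside that cone the decay slows down, becoming as weak as $|\xi_{\parallel}|^{-1-1/\gamma}$ in the narrowest sub-cone $|\xi_{\perp}|\lesssim|\xi_{\parallel}|^{1/\gamma}$, with a smoother stationary-phase interpolation in between. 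Lattice points outside the cone contribute
\[
R^{4}\sum_{\mathbf{m}\neq\mathbf{0}}\bigl(R|\mathbf{m}|\bigr)^{-3}=R\sum_{\mathbf{m}\neq\mathbf{0}}|\mathbf{m}|^{-3}=O(R),
\]
which is already within budget, so everything reduces to controlling the contribution of the lattice points $\mathbf{m}$ whose direction lies close to $\pm\Theta$.

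Here the Diophantine hypothesis enters through the inequality
\[
|\mathbf{m}\cdot\Theta^{\perp}|=|\alpha m_{2}-\beta m_{1}|\geqslant c\,|\mathbf{m}|^{-1-\delta},\qquad \mathbf{m}\in\mathbb{Z}^{2}\setminus\{\mathbf{0}\},
\]
which forbids lattice points arbitrarily close to the ray through the origin in direction $\Theta$. I would then perform a double dyadic decomposition indexed by $|\mathbf{m}|\sim 2^{k}$ and $|\mathbf{m}\cdot\Theta^{\perp}|\sim 2^{-j}$. Since the values $\mathbf{m}\cdot\Theta^{\perp}$ for lattice points with $|\mathbf{m}|\leqslant N$ are separated by at least $c/N^{1+\delta}$, the number of lattice points in a given double-dyadic cell is bounded by $C\bigl(2^{k(1+\delta)-j}+1\bigr)$. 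Multiplying this count by the appropriate squared Fourier bound (the stationary-phase estimate in the intermediate sub-cone, the van der Corput estimate in the very narrow one) and summing geometrically over $k$ and $j$ should give the desired $O(R)$ bound.

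The main obstacle is the geometric bookkeeping of this sum: it splits into several sub-sums according to whether the Diophantine count $2^{k(1+\delta)-j}$ or the trivial bound $2^{2k}$ is the binding one, and whether the cell lies in the very narrow sub-cone $|\xi_{\perp}|\lesssim|\xi_{\parallel}|^{1/\gamma}$ or in the intermediate one, and for each combination one must balance the resulting exponents of $R$. I expect the Diophantine threshold $\delta<2/(\gamma-2)$ to emerge exactly as the critical value at which the largest of these sub-sums stops being (poly-logarithmically) divergent; at the borderline $\delta=2/(\gamma-2)$ one would recover only the unimproved bound $R^{1-1/\gamma}$ of Theorem~\ref{Theorem A}.
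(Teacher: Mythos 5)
Your proposal is correct and follows essentially the same route as the paper: Parseval's identity, the three directional bounds of Proposition \ref{Prop Stima chi} applied according to the angle between $\mathbf{m}$ and $\pm\Theta$, and the Diophantine hypothesis to control the frequencies nearly parallel to $\Theta$. The paper's bookkeeping is simpler than your double dyadic count: in the strip $0<\left\vert -\beta m+\alpha n\right\vert <1/2$ each $n$ admits only $O(1)$ values of $m$, so one substitutes the worst-case bound $\left\Vert n\alpha/\beta\right\Vert \geqslant c\left\vert n\right\vert ^{-1-\delta}$ directly into the second estimate of Proposition \ref{Prop Stima chi} and sums over $n$, the hypothesis $\delta<2/\left(  \gamma-2\right)  $ being exactly what makes the resulting exponent of $\left\vert n\right\vert $ less than $-1$.
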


By a classical result of Jarnik (see e.g. \cite[\S 10.3]{F}) the set of real
numbers $\omega$ that are $\left(  2+\delta\right)  $-well approximable, that
is%
\[
\left\Vert n\omega\right\Vert \leqslant n^{1-\left(  2+\delta\right)
}=n^{-1-\delta}%
\]
for infinitely many $n$, has Hausdorff dimension $\frac{2}{2+\delta}$. In
particular the exceptional set in the above theorem, where the discrepancy may
be larger than $R^{1/2}$ has Hausdorff dimension at most $\frac{\gamma
-2}{\gamma-1}$.

\section{Estimates for the Fourier transforms}

The main ingredient in the proof of our results on the discrepancy comes from
suitable estimates of the decay of $\widehat{\chi}_{B}\left(  \mathbf{\zeta
}\right)  $. We start studying a family of oscillatory integrals.

As usual we write $d$-dimensional points through the notation $\mathbf{z}%
=\left(  x,t\right)  $ and $\mathbf{\zeta}=\left(  \xi,s\right)  $ (see the Introduction).

\begin{lemma}
Let $U\subset\mathbb{R}^{d-1}$ be an open ball about the origin of radius $b$,
let $\Phi\in S_{\gamma}\left(  U\right)  $ for some $\gamma>1$, let $\psi$ be
a smooth function supported in $\left\{  0<a\leqslant\left\vert x\right\vert
\leqslant b\right\}  $, for every positive integer $k$ let%
\[
\Phi_{k}\left(  x\right)  =2^{k\gamma}\Phi\left(  2^{-k}x\right)
\]
and let%
\[
I_{k}\left(  \xi,s\right)  =\int_{\mathbb{R}^{d-1}}\left(  \nabla\Phi\left(
2^{-k}x\right)  ,-1\right)  e^{-2\pi i\left(  \xi,s\right)  \cdot\left(
x,\Phi_{k}\left(  x\right)  \right)  }\psi\left(  x\right)  dx.
\]
Then there exist constants $c,c_{1},c_{2}>0$ and, for every $M>0$, a constant
$c_{M}$ such that for every $k\geqslant0$%
\[
\left\vert I_{k}\left(  \xi,s\right)  \right\vert \leqslant\left\{
\begin{array}
[c]{ll}%
c\left(  1+\left\vert s\right\vert +\left\vert \xi\right\vert \right)
^{-\frac{d-1}{2}} & \text{for every }\left(  \xi,s\right) \\
c_{M}\left(  1+\left\vert s\right\vert \right)  ^{-M} & \text{if }\left\vert
\xi\right\vert \leqslant c_{1}\left\vert s\right\vert ,\\
c_{M}\left(  1+\left\vert \xi\right\vert \right)  ^{-M} & \text{if }%
c_{2}\left\vert s\right\vert \leqslant\left\vert \xi\right\vert .
\end{array}
\right.
\]

\end{lemma}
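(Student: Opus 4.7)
The key observation is the scale-invariance built into the definition of $S_\gamma$: the rescaled functions $\Phi_k(x) = 2^{k\gamma}\Phi(2^{-k}x)$ satisfy
\[
c_1|x|^{\gamma-2} \leqslant \mu_j^{(k)}(x) \leqslant c_2|x|^{\gamma-2},\qquad \bigl|\partial^\alpha\Phi_k(x)\bigr| \leqslant C_\alpha|x|^{\gamma-|\alpha|},
\]
with constants independent of $k\geqslant 0$, because the chain rule inserts exactly the powers of $2^{-k}$ that cancel the factor $2^{k\gamma}$. On the annulus $\{a\leqslant|x|\leqslant b\}$ containing $\operatorname{supp}(\psi)$, this gives uniform $C^N$ bounds for $\Phi_k$ and a Hessian with eigenvalues bounded above and uniformly away from zero. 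A Taylor expansion based on $\Phi(0)=0$, $\nabla\Phi(0)=0$ (both forced by the axioms) combined with the lower bound on the Hessian yields $c\leqslant|\nabla\Phi_k(x)|\leqslant C$ uniformly in $k$, and the vector amplitude $A_k(x) = \bigl(\nabla\Phi(2^{-k}x),-1\bigr) = \bigl(2^{-k(\gamma-1)}\nabla\Phi_k(x),-1\bigr)$ has the same uniform $C^N$ estimates on $\operatorname{supp}(\psi)$.

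Writing the phase $\phi(x) = \xi\cdot x + s\Phi_k(x)$, so $\nabla\phi = \xi + s\nabla\Phi_k$, I would choose $c_1$ strictly smaller than the uniform lower bound of $|\nabla\Phi_k|$ and $c_2$ strictly larger than its uniform upper bound. On $\{|\xi|\leqslant c_1|s|\}$ one then has $|\nabla\phi|\geqslant|s|\,|\nabla\Phi_k|-|\xi|\gtrsim|s|$, with no critical points, and symmetrically $|\nabla\phi|\gtrsim|\xi|$ on $\{|\xi|\geqslant c_2|s|\}$. In each region, $M$ successive integrations by parts against the transpose of $L = (2\pi i|\nabla\phi|^2)^{-1}\nabla\phi\cdot\nabla$ yield the rapid-decay bounds (ii) and (iii); the coefficients produced along the way are polynomials in derivatives of $\Phi_k$ and in $|\nabla\phi|^{-2}$, which by the first paragraph are uniformly controlled in $k$.

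For the global estimate (i), the trivial bound $|I_k|\leqslant\|A_k\|_\infty\|\psi\|_1$ handles bounded $|s|+|\xi|$. When $|s|+|\xi|$ is large, the estimates (ii) and (iii) handle the regimes $|\xi|\leqslant c_1|s|$ and $|\xi|\geqslant c_2|s|$ with decay far better than $(d-1)/2$. In the intermediate cone $c_1|s|\leqslant|\xi|\leqslant c_2|s|$ one has $|s|\sim|\xi|\sim|s|+|\xi|$ and critical points may occur; here I would apply the classical stationary phase lemma to the factored form $\int A_k(x)\psi(x)\,e^{-2\pi i s\Psi_k(x)}\,dx$ with $\Psi_k(x) = \Phi_k(x) + (\xi/s)\cdot x$. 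Since $D^2\Psi_k = D^2\Phi_k$ is positive definite with eigenvalues bounded above and below uniformly in $k$, and since the $C^N$ norms of amplitude and phase are uniformly controlled, the standard asymptotic (cf.\ Chapter VIII of Stein) gives $|I_k|\leqslant c|s|^{-(d-1)/2}\sim(1+|s|+|\xi|)^{-(d-1)/2}$.

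The only genuinely nontrivial point is ensuring that every constant arising from stationary phase and from the integration by parts is independent of $k$, and this is precisely the purpose of the rescaling $\Phi_k = 2^{k\gamma}\Phi(2^{-k}\cdot)$ together with the scale-invariant form of the $S_\gamma$ axioms. Once the uniform $C^N$ bounds and the uniform upper/lower estimates on $|\nabla\Phi_k|$ are in place on the fixed annulus $\{a\leqslant|x|\leqslant b\}$, the remaining arguments are standard.
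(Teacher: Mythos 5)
Your proposal is correct and takes essentially the same route as the paper's proof: uniform, $k$-independent bounds on the derivatives and Hessian eigenvalues of $\Phi_k$ on the fixed annulus, two-sided bounds on $\left\vert \nabla\Phi_k\right\vert$ (the lower one obtained, as in the paper, by integrating the Hessian along the segment from the origin) yielding non-stationarity of the phase outside the cone $c_1\left\vert s\right\vert\leqslant\left\vert \xi\right\vert\leqslant c_2\left\vert s\right\vert$ and hence rapid decay by repeated integration by parts, and nondegenerate stationary phase for the uniform $\left(1+\left\vert s\right\vert+\left\vert \xi\right\vert\right)^{-\frac{d-1}{2}}$ bound. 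The only difference is cosmetic: you split off the intermediate cone explicitly before invoking stationary phase, whereas the paper cites the stationary-phase theorem for all $\left(\xi,s\right)$ at once.
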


\begin{proof}
The behaviour of the oscillatory integral $I_{k}\left(  \xi,s\right)  $
depends on the points where the amplitude $\psi\left(  x\right)  $ is not zero
and the phase $\left(  \xi,s\right)  \cdot\left(  x,\Phi_{k}\left(  x\right)
\right)  $ is stationary. This happens only when $\left\vert \xi\right\vert
\approx\left\vert s\right\vert $ and in this case, since the phase is non
degenerate, one obtains the classical estimate $c\left\vert \left(
\xi,s\right)  \right\vert ^{-\left(  d-1\right)  /2}$. In all other directions
the oscillatory integral has a fast decay. In particular, when $\left\vert
\xi\right\vert \leqslant c_{1}\left\vert s\right\vert $ one obtains the decay
$c_{M}\left(  1+\left\vert s\right\vert \right)  ^{-M}$, and when $\left\vert
\xi\right\vert >c_{2}\left\vert s\right\vert $, one obtains the decay
$c_{M}\left(  1+\left\vert \xi\right\vert \right)  ^{-M}$. For the sake of
completeness we include the full details of the proof.

By the definition of the class $S_{\gamma}$ we have%
\begin{align}
\left\vert \frac{\partial^{\left\vert \alpha\right\vert }\Phi_{k}}{\partial
x^{\alpha}}\left(  x\right)  \right\vert  &  =2^{k\gamma}2^{-k\left\vert
\alpha\right\vert }\left\vert \frac{\partial^{\left\vert \alpha\right\vert
}\Phi}{\partial x^{\alpha}}\left(  2^{-k}x\right)  \right\vert \leqslant
2^{k\gamma}2^{-k\left\vert \alpha\right\vert }c_{\alpha}\left\vert
2^{-k}x\right\vert ^{\gamma-\left\vert \alpha\right\vert }\label{Der Unif}\\
&  \leqslant c_{\alpha}\left\vert x\right\vert ^{\gamma-\left\vert
\alpha\right\vert }.\nonumber
\end{align}
In particular when $x$ belongs to the support of $\psi\left(  x\right)  $ we
have%
\[
\left\vert \frac{\partial^{\left\vert \alpha\right\vert }\Phi_{k}}{\partial
x^{\alpha}}\left(  x\right)  \right\vert \leqslant c_{\alpha}.
\]
Moreover, the Hessian matrix of $\Phi_{k}\left(  x\right)  $ satisfies%
\[
\operatorname{Hess}\Phi_{k}\left(  x\right)  =2^{k\left(  \gamma-2\right)
}\operatorname{Hess}\Phi\left(  2^{-k}x\right)
\]
and it follows that the eigenvalues $\mu_{j}^{\left(  k\right)  }\left(
x\right)  $ of $\operatorname{Hess}\Phi_{k}\left(  x\right)  $ are related to
the eigenvalues $\mu_{j}\left(  x\right)  $ of $\operatorname{Hess}\Phi\left(
x\right)  $ by the identity%
\[
\mu_{j}^{\left(  k\right)  }\left(  x\right)  =2^{k\left(  \gamma-2\right)
}\mu_{j}\left(  2^{-k}x\right)  .
\]
By (\ref{Eigenvalues})%
\begin{equation}
\mu_{j}^{\left(  k\right)  }\left(  x\right)  =\frac{\mu_{j}\left(
2^{-k}x\right)  }{\left\vert 2^{-k}x\right\vert ^{\gamma-2}}\left\vert
x\right\vert ^{\gamma-2}\geqslant c\left\vert x\right\vert ^{\gamma-2}.
\label{pos eigen}%
\end{equation}
If $x$ belongs to the support of $\psi\left(  x\right)  $ we have
\[
\mu_{j}^{\left(  k\right)  }\left(  x\right)  \geqslant c>0.
\]
Since%
\[
\nabla\Phi\left(  2^{-k}x\right)  =2^{-k\left(  \gamma-1\right)  }\nabla
\Phi_{k}\left(  x\right)
\]
by (\ref{Der Unif}) all the derivatives of $\nabla\Phi\left(  2^{-k}x\right)
$ are uniformly bounded.

The phase in the integral $I_{k}\left(  \xi,s\right)  $ is stationary when%
\[
\nabla\left(  \xi\cdot x+s\Phi_{k}\left(  x\right)  \right)  =\xi+s\nabla
\Phi_{k}\left(  x\right)  =0.
\]
By (\ref{Der Unif}) there exits $c_{2}>0$ such that $\left\vert \nabla\Phi
_{k}\left(  x\right)  \right\vert \leqslant\frac{c_{2}}{2}$ for every $k$. It
follows that for $\left\vert \xi\right\vert \geqslant c_{2}\left\vert
s\right\vert $ we have%
\[
\left\vert \xi+s\nabla\Phi_{k}\left(  x\right)  \right\vert \geqslant
\left\vert \xi\right\vert -\left\vert s\right\vert \left\vert \nabla\Phi
_{k}\left(  x\right)  \right\vert \geqslant\frac{1}{2}\left\vert
\xi\right\vert .
\]
Integrating by parts $M$ times gives (see e.g. Proposition 4, p. 341, in
\cite{Stein})%
\[
\left\vert I_{k}\left(  \xi,s\right)  \right\vert \leqslant c_{M}\left(
1+\left\vert \xi\right\vert \right)  ^{-M}.
\]
Let now $\left\vert \xi\right\vert \leqslant c_{1}\left\vert s\right\vert $
where $c_{1}$ is a constant which will be determined later on. Let us consider
the function
\[
F\left(  t\right)  =\nabla\Phi_{k}\left(  tx\right)  \cdot x
\]
with $t\in\left[  0,1\right]  $. Then, for $t\in\left(  0,1\right]  $%
\[
F^{\prime}\left(  t\right)  =x^{T}\operatorname{Hess}\Phi_{k}\left(
tx\right)  x
\]
and by (\ref{pos eigen}) the eigenvalues of $\operatorname{Hess}\Phi
_{k}\left(  tx\right)  $ are bounded from below by $t^{\gamma-2}\left\vert
x\right\vert ^{\gamma-2}$. Then%
\[
F\left(  1\right)  \geqslant\int_{0}^{1}x^{T}\operatorname{Hess}\Phi
_{k}\left(  tx\right)  xdt\geqslant c\int_{0}^{1}t^{\gamma-2}\left\vert
x\right\vert ^{\gamma-2}\left\vert x\right\vert ^{2}dt\geqslant c\left\vert
x\right\vert ^{\gamma}%
\]
and therefore%
\[
\left\vert \nabla\Phi_{k}\left(  x\right)  \right\vert \geqslant\frac
{\nabla\Phi_{k}\left(  x\right)  \cdot x}{\left\vert x\right\vert }\geqslant
c\left\vert x\right\vert ^{\gamma-1}\geqslant2c_{1}>0.
\]
It follows that%
\[
\left\vert \xi+s\nabla\Phi_{k}\left(  x\right)  \right\vert \geqslant
\left\vert s\right\vert \left\vert \nabla\Phi_{k}\left(  x\right)  \right\vert
-\left\vert \xi\right\vert \geqslant c_{1}\left\vert s\right\vert .
\]
Integrating by parts $M$ times gives%
\[
\left\vert I_{k}\left(  \xi,s\right)  \right\vert \leqslant c_{M}\left(
1+\left\vert s\right\vert \right)  ^{-M}.
\]
Finally, for every $\left(  \xi,s\right)  $, Theorem 1, p. 348, in
\cite{Stein} gives%
\[
\left\vert I_{k}\left(  \xi,s\right)  \right\vert \leqslant c\left(
1+\left\vert \xi\right\vert +\left\vert s\right\vert \right)  ^{-\frac{d-1}%
{2}}.
\]

\end{proof}

\begin{proposition}
\label{Prop Stima chi}Let $\gamma>1$ and let $B$ be a bounded convex body in
$\mathbb{R}^{d}$ with everywhere positive Gaussian curvature with the
exception of a single flat point of order $\gamma$. Let $\Theta$ be the
outward unit normal to $\partial B$ at the flat point and for every
$\mathbf{\zeta}\in\mathbb{R}^{d}$ write $\mathbf{\zeta}=\mathbf{\xi}+s\Theta$,
with $s=\mathbf{\zeta}\cdot\Theta$ and $\mathbf{\xi}\cdot\Theta=0$. Then, if
$1<\gamma\leqslant2$%
\begin{equation}
\left\vert \widehat{\chi}_{B}\left(  \mathbf{\zeta}\right)  \right\vert
\leqslant c\left\vert \mathbf{\zeta}\right\vert ^{-\frac{d+1}{2}}.
\label{Stima Chi ottimale}%
\end{equation}
If $\gamma>2$ the following three upper bounds hold:%
\begin{equation}
\left\vert \widehat{\chi}_{B}\left(  \mathbf{\zeta}\right)  \right\vert
\leqslant\left\{
\begin{array}
[c]{l}%
c\left\vert s\right\vert ^{-1-\frac{d-1}{\gamma}},\\
c\left\vert \mathbf{\xi}\right\vert ^{-\left(  d-1\right)  \frac{\gamma
-2}{2\left(  \gamma-1\right)  }}\left\vert s\right\vert ^{-\frac{d-1}{2\left(
\gamma-1\right)  }-1},\\
c\left\vert \mathbf{\xi}\right\vert ^{-\frac{d+1}{2}}.
\end{array}
\right.  \label{Stima gamma>2}%
\end{equation}

\end{proposition}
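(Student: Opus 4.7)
The plan is to combine the divergence theorem with a dyadic decomposition at the flat point, applying the previous lemma on each scale. Starting from
\begin{equation*}
\widehat{\chi}_{B}(\mathbf{\zeta}) = \frac{-1}{2\pi i |\mathbf{\zeta}|^{2}} \int_{\partial B} \mathbf{\zeta}\cdot n(\mathbf{z})\, e^{-2\pi i \mathbf{\zeta}\cdot\mathbf{z}}\, dS(\mathbf{z})
\end{equation*}
(valid because $\Phi\in C^{1}$ makes $\partial B$ Lipschitz), I would introduce a smooth partition of unity $1=\rho_{0}+\rho_{1}$ on $\partial B$ with $\rho_{1}$ localized in a small neighborhood of the flat point. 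The contribution from $\rho_{0}$ is a classical oscillatory integral on a smooth hypersurface with non-vanishing Gaussian curvature: standard stationary phase (Stein, Chapter 8) produces a bound of size $|\mathbf{\zeta}|\cdot|\mathbf{\zeta}|^{-(d-1)/2}$, the factor $|\mathbf{\zeta}|$ coming from $|\mathbf{\zeta}\cdot n|$, and division by $|\mathbf{\zeta}|^{2}$ then yields $c|\mathbf{\zeta}|^{-(d+1)/2}$. Since $|\mathbf{\xi}|\leq|\mathbf{\zeta}|$, this already produces the third bound in (\ref{Stima gamma>2}).

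For the $\rho_{1}$-piece I would switch to local graph coordinates $\mathbf{z}=(x,\Phi(x))$ with $\Phi\in S_{\gamma}(U)$, rewriting the contribution as
\begin{equation*}
\frac{-1}{2\pi i |\mathbf{\zeta}|^{2}}\int_{U}(\xi\cdot\nabla\Phi(x)-s)\,e^{-2\pi i(\xi\cdot x+s\Phi(x))}\rho(x)\,dx
\end{equation*}
for a cutoff $\rho$, and decompose $\rho(x)=\sum_{k\geq 0}\psi(2^{k}x)$ with $\psi\in C_{c}^{\infty}$ supported in an annulus. Rescaling $y=2^{k}x$ and using the homogeneity $\nabla\Phi(2^{-k}y)=2^{-k(\gamma-1)}\nabla\Phi_{k}(y)$, $\Phi(2^{-k}y)=2^{-k\gamma}\Phi_{k}(y)$, each summand equals $2^{-k(d-1)}\,\mathbf{\zeta}\cdot I_{k}(2^{-k}\xi,\,2^{-k\gamma}s)$ with $I_{k}$ exactly as in the lemma; hence the entire $\rho_{1}$-contribution is controlled by
\begin{equation*}
\frac{1}{|\mathbf{\zeta}|}\sum_{k\geq 0}2^{-k(d-1)}\,\bigl|I_{k}(2^{-k}\xi,\,2^{-k\gamma}s)\bigr|.
\end{equation*}
When $1<\gamma\leq 2$, the inequality $2^{-k\gamma}\leq 2^{-k}$ for $k\geq 0$ combined with the uniform bound $|I_{k}(\eta,\sigma)|\leq c(1+|\eta|+|\sigma|)^{-(d-1)/2}$ reduces the summand to $2^{-k(d-1)}(1+2^{-k}|\mathbf{\zeta}|)^{-(d-1)/2}$; a standard split at $2^{k}\sim|\mathbf{\zeta}|$ gives $|\mathbf{\zeta}|^{-(d-1)/2}$, producing (\ref{Stima Chi ottimale}). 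For $\gamma>2$, the first bound in (\ref{Stima gamma>2}) comes from the rapid-decay estimate $|I_{k}|\leq c_{M}(1+2^{-k\gamma}|s|)^{-M}$: splitting at $2^{k_{0}}\sim|s|^{1/\gamma}$ yields $\sum\leq c|s|^{-(d-1)/\gamma}$, and then dividing by $|\mathbf{\zeta}|\sim|s|$ gives $c|s|^{-1-(d-1)/\gamma}$.

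The middle bound of (\ref{Stima gamma>2}) is the most delicate: it corresponds to the transition scale $2^{k^{\ast}}\sim(|s|/|\mathbf{\xi}|)^{1/(\gamma-1)}$ at which the two scaled frequencies $2^{-k}|\mathbf{\xi}|$ and $2^{-k\gamma}|s|$ balance. Applying the uniform $(d-1)/2$-bound near $k^{\ast}$ and the rapid-decay bounds for $k$ away from $k^{\ast}$, one checks that both tails of the dyadic series are dominated by the summand at $k^{\ast}$, which equals $|\mathbf{\xi}|^{-(d-1)(\gamma-2)/(2(\gamma-1))}|s|^{-(d-1)/(2(\gamma-1))}$; dividing by $|\mathbf{\zeta}|\sim|s|$ produces the second bound. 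The main obstacle is precisely this transition-scale analysis: identifying $k^{\ast}$, verifying that the series is genuinely concentrated there, matching the exponents on $|\mathbf{\xi}|$ and $|s|$, and summing the tails via the lemma's rapid-decay bounds. A secondary subtlety is the case $\gamma=2$ with $\Phi$ not smooth at the origin, where classical stationary phase on $\partial B$ is unavailable; the dyadic rescaling bypasses this, since the lemma's estimates require only $\Phi\in S_{\gamma}$ and not smoothness at the flat point.
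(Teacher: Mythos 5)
Your architecture is exactly the paper's: divergence theorem, cutoff isolating the flat point, graph coordinates, dyadic decomposition $\rho=\sum_{k}\psi(2^{k}\cdot)$, rescaling to the integrals $I_{k}$ of the lemma, and summation of the dyadic series. Your treatment of the three bounds in (\ref{Stima gamma>2}) is essentially correct; in particular the transition-scale analysis at $2^{k^{*}}\sim(|s|/|\xi|)^{1/(\gamma-1)}$ for the middle bound matches the paper's computation (the paper gets the concentration at $k^{*}$ purely from geometric domination using the uniform $(d-1)/2$-bound, with no need for the rapid-decay estimates in the tails). One small inaccuracy: for the first bound you invoke $|I_{k}|\leqslant c_{M}(1+2^{-k\gamma}|s|)^{-M}$ for every $k$, but the lemma provides that estimate only when $|2^{-k}\xi|\leqslant c_{1}|2^{-k\gamma}s|$; for the remaining $k$ you should instead discard the $\xi$-term from the uniform bound, i.e.\ use $(1+|2^{-k\gamma}s|+|2^{-k}\xi|)^{-\frac{d-1}{2}}\leqslant(1+|2^{-k\gamma}s|)^{-\frac{d-1}{2}}$, which yields the same conclusion.

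The genuine gap is in the case $1<\gamma\leqslant2$. Since $\gamma>1$ we have $2^{-k\gamma}|s|+2^{-k}|\xi|\leqslant c\,2^{-k}|\zeta|$, so $(1+|2^{-k\gamma}s|+|2^{-k}\xi|)^{-\frac{d-1}{2}}\geqslant c(1+2^{-k}|\zeta|)^{-\frac{d-1}{2}}$: your claimed reduction bounds the summand from \emph{below}, not from above, and the "standard split at $2^{k}\sim|\zeta|$" does not follow. For $1<\gamma<2$ the estimate can still be recovered by combining the $\xi$-only bound $|\xi|^{-\frac{d+1}{2}}$ with the $s$-only computation, which gives $|s|^{-\frac{d+1}{2}}$ because $\sum_{2^{k}<|s|^{1/\gamma}}2^{k(d-1)(\gamma/2-1)}$ is then a convergent geometric series; taking the minimum and using $\max(|s|,|\xi|)\geqslant|\zeta|/\sqrt{2}$ gives (\ref{Stima Chi ottimale}). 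But at $\gamma=2$ that same computation produces $|s|^{-\frac{d+1}{2}}\log(2+|s|)$, and since $\Phi$ need not be smooth at the origin you cannot fall back on classical stationary phase on $\partial B$. Removing the logarithm requires splitting the dyadic sum into the ranges $2^{k}\lesssim|s|/|\xi|$, $2^{k}\gtrsim|s|/|\xi|$ and the transition range, and using the rapid-decay estimates of the lemma in the first two; this is precisely the extra care the $\gamma=2$ case demands, and it is missing from your argument.
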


The particular case where the boundary in a neighborhood of the flat point has
equation $t=\left\vert x\right\vert ^{\gamma}$ with $\gamma>2$ and $d=2$ has
been already considered in \cite{BRT}. The same case with $\gamma>2$ and $d>2$
has been considered in \cite{BGT}, but we acknowledge that the proof of the
rate of decay in the horizontal directions was not correctly justified.

\begin{proof}
Choose a smooth function $\eta\left(  \mathbf{z}\right)  $ supported in a
neighborhood of the flat point and such that $\eta\left(  \mathbf{z}\right)
=1$ in a smaller neighborhood. For every $\mathbf{z}\in\partial B$ let
$\mathbf{\nu}\left(  \mathbf{z}\right)  $ be its outward unit normal. Applying
the divergence theorem we decompose the Fourier transform as%
\begin{align}
\widehat{\chi}_{B}\left(  \mathbf{\zeta}\right)   &  =\int_{B}e^{-2\pi
i\mathbf{\zeta}\cdot\mathbf{z}}d\mathbf{z}=\frac{-1}{4\pi^{2}\left\vert
\mathbf{\zeta}\right\vert ^{2}}\int_{\partial B}\nabla\left(  e^{-2\pi
i\mathbf{\zeta}\cdot\mathbf{z}}\right)  \cdot\mathbf{\nu}\left(
\mathbf{z}\right)  d\sigma\left(  \mathbf{z}\right) \nonumber\\
&  =\frac{-1}{2\pi i\left\vert \mathbf{\zeta}\right\vert ^{2}}\int_{\partial
B}\mathbf{\zeta}\cdot\mathbf{\nu}\left(  \mathbf{z}\right)  e^{-2\pi
i\mathbf{\zeta}\cdot\mathbf{z}}d\sigma\left(  \mathbf{z}\right) \nonumber\\
&  =\frac{-1}{2\pi i\left\vert \mathbf{\zeta}\right\vert ^{2}}\int_{\partial
B}\mathbf{\zeta}\cdot\mathbf{\nu}\left(  \mathbf{z}\right)  e^{-2\pi
i\mathbf{\zeta}\cdot\mathbf{z}}\eta\left(  \mathbf{z}\right)  d\sigma\left(
\mathbf{z}\right) \label{Def I_1 e I_2}\\
&  +\frac{-1}{2\pi i\left\vert \mathbf{\zeta}\right\vert ^{2}}\int_{\partial
B}\mathbf{\zeta}\cdot\mathbf{\nu}\left(  \mathbf{z}\right)  e^{-2\pi
i\mathbf{\zeta}\cdot\mathbf{z}}\left[  1-\eta\left(  \mathbf{z}\right)
\right]  d\sigma\left(  \mathbf{z}\right) \nonumber\\
&  =K_{1}\left(  \mathbf{\zeta}\right)  +K_{2}\left(  \mathbf{\zeta}\right)
.\nonumber
\end{align}
Since in the support of the function $1-\eta\left(  \mathbf{z}\right)  $ the
Gaussian curvature is bounded away from zero, the method of stationary phase
gives the classical estimate (see Theorem 1, p. 348, in \cite{Stein})%
\begin{equation}
\left\vert K_{2}\left(  \mathbf{\zeta}\right)  \right\vert \leqslant
c\left\vert \mathbf{\zeta}\right\vert ^{-\frac{d+1}{2}}. \label{Stima I_2}%
\end{equation}
By a suitable choice of coordinates we can assume that $\mathbf{z}=\left(
x,t\right)  $, the flat point is the point $\left(  0,0\right)  $, its outward
normal is $\left(  0,-1\right)  $ and that the relevant part of the surface
$\partial B$ is described by the equation $t=\Phi\left(  x\right)  $ with
$\Phi\in S_{\gamma}$. Hence%
\[
\mathbf{\nu}\left(  x,\Phi\left(  x\right)  \right)  =\frac{\left(  \nabla
\Phi\left(  x\right)  ,-1\right)  }{\sqrt{1+\left\vert \nabla\Phi\left(
x\right)  \right\vert ^{2}}}.
\]
Write $\varphi\left(  x\right)  =\eta\left(  x,\Phi\left(  x\right)  \right)
$ and $\psi\left(  x\right)  =\varphi\left(  x\right)  -\varphi\left(
2x\right)  $ so that for every $x\neq0$%
\[
\varphi\left(  x\right)  =\sum_{k=0}^{+\infty}\psi\left(  2^{k}x\right)  .
\]
Observe that $\varphi\left(  x\right)  $ is smooth and a suitable choice of
$\eta\left(  \mathbf{z}\right)  $ guarantees $\varphi\left(  x\right)  =1$ if
$\left\vert x\right\vert \leqslant\varepsilon/2$ and $\varphi\left(  x\right)
=0$ if $\left\vert x\right\vert \geqslant\varepsilon$ for some $\varepsilon
>0$. With the above choice of coordinate we can also write $\mathbf{\zeta
=}\left(  \xi,-s\right)  $ so that, following the notation of the previous
lemma, we have%
\begin{align}
&  K_{1}\left(  \mathbf{\zeta}\right) \nonumber\\
&  =\frac{-1}{2\pi i\left\vert \mathbf{\zeta}\right\vert ^{2}}\int
_{\mathbb{R}^{d-1}}\mathbf{\zeta}\cdot\frac{\left(  \nabla\Phi\left(
x\right)  ,-1\right)  }{\sqrt{1+\left\vert \nabla\Phi\left(  x\right)
\right\vert ^{2}}}e^{-2\pi i\mathbf{\zeta}\cdot\left(  x,\Phi\left(  x\right)
\right)  }\varphi\left(  x\right)  \sqrt{1+\left\vert \nabla\Phi\left(
x\right)  \right\vert ^{2}}dx\nonumber\\
&  =\frac{-\mathbf{\zeta}}{2\pi i\left\vert \mathbf{\zeta}\right\vert ^{2}%
}\cdot\int_{\mathbb{R}^{d-1}}\left(  \nabla\Phi\left(  x\right)  ,-1\right)
e^{-2\pi i\mathbf{\zeta}\cdot\left(  x,\Phi\left(  x\right)  \right)  }%
\varphi\left(  x\right)  dx\label{Stima I_1}\\
&  =\frac{-\mathbf{\zeta}}{2\pi i\left\vert \mathbf{\zeta}\right\vert ^{2}%
}\cdot\sum_{k=0}^{+\infty}\int_{\mathbb{R}^{d-1}}\left(  \nabla\Phi\left(
x\right)  ,-1\right)  e^{-2\pi i\mathbf{\zeta}\cdot\left(  x,\Phi\left(
x\right)  \right)  }\psi\left(  2^{k}x\right)  dx\nonumber\\
&  =\sum_{k=0}^{+\infty}\dfrac{-2^{-k\left(  d-1\right)  }\mathbf{\zeta}}{2\pi
i\left\vert \mathbf{\zeta}\right\vert ^{2}}\nonumber\\
&  \ \ \ \ \ \ \cdot\int_{\mathbb{R}^{d-1}}\left(  \nabla\Phi\left(
2^{-k}y\right)  ,-1\right)  e^{-2\pi i\left(  2^{-k}\xi,-2^{-k\gamma}s\right)
\cdot\left(  y,2^{k\gamma}\Phi\left(  2^{-k}y\right)  \right)  }\psi\left(
y\right)  dy\nonumber\\
&  =\sum_{k=0}^{+\infty}2^{-k\left(  d-1\right)  }\frac{-\mathbf{\zeta}}{2\pi
i\left\vert \mathbf{\zeta}\right\vert ^{2}}\cdot I_{k}\left(  2^{-k}%
\xi,-2^{-k\gamma}s\right)  .\nonumber
\end{align}
By the previous lemma%
\[
\left\vert I_{k}\left(  2^{-k}\xi,-2^{-k\gamma}s\right)  \right\vert \leqslant
c\left(  1+\left\vert 2^{-k\gamma}s\right\vert +\left\vert 2^{-k}%
\xi\right\vert \right)  ^{-\frac{d-1}{2}}.
\]
Hence,%
\[
\left\vert K_{1}\left(  \mathbf{\zeta}\right)  \right\vert \leqslant\frac
{c}{\left\vert \left(  \xi,s\right)  \right\vert }\sum_{k=0}^{+\infty
}2^{-k\left(  d-1\right)  }\left(  1+\left\vert 2^{-k\gamma}s\right\vert
+\left\vert 2^{-k}\xi\right\vert \right)  ^{-\frac{d-1}{2}}.
\]
In particular, for every $\left(  \xi,s\right)  \in\mathbb{R}^{d}$ we have%
\[
\left\vert K_{1}\left(  \mathbf{\zeta}\right)  \right\vert \leqslant\frac
{c}{\left\vert \xi\right\vert }\sum_{k=0}^{+\infty}2^{-k\left(  d-1\right)
}\left\vert 2^{-k}\xi\right\vert ^{-\frac{d-1}{2}}\leqslant c\left\vert
\xi\right\vert ^{-\frac{d+1}{2}}.
\]
Assume now $\gamma\neq2$. Our second estimate for $K_{1}\left(  \mathbf{\zeta
}\right)  $ is as follows. For every $\left(  \xi,s\right)  \in\mathbb{R}^{d}$
we have%
\begin{align*}
\left\vert K_{1}\left(  \mathbf{\zeta}\right)  \right\vert  &  \leqslant
\frac{c}{\left\vert s\right\vert }\sum_{k=0}^{+\infty}2^{-k\left(  d-1\right)
}\left(  1+\left\vert 2^{-k\gamma}s\right\vert \right)  ^{-\frac{d-1}{2}}\\
&  \leqslant\frac{c}{\left\vert s\right\vert }\sum_{2^{k}\geqslant\left\vert
s\right\vert ^{1/\gamma}}2^{-k\left(  d-1\right)  }+\frac{c}{\left\vert
s\right\vert }\sum_{2^{k}<\left\vert s\right\vert ^{1/\gamma}}2^{-k\left(
d-1\right)  }\left\vert 2^{-k\gamma}s\right\vert ^{-\frac{d-1}{2}}\\
&  \leqslant c\left\vert s\right\vert ^{-1-\frac{d-1}{\gamma}}+c\left\vert
s\right\vert ^{-1-\frac{d-1}{2}}\sum_{2^{k}<\left\vert s\right\vert
^{1/\gamma}}2^{k\left(  d-1\right)  \left(  \frac{\gamma}{2}-1\right)  }\\
&  \leqslant\left\{
\begin{array}
[c]{ll}%
c\left\vert s\right\vert ^{-\frac{d+1}{2}} & \gamma<2,\\
c\left\vert s\right\vert ^{-1-\frac{d-1}{\gamma}} & \gamma>2.
\end{array}
\right.
\end{align*}
Note that when $\gamma=2$ the previous computation gives $c\left\vert
s\right\vert ^{-\frac{d+1}{2}}\log\left(  2+\left\vert s\right\vert \right)
$. However when $\Phi\left(  y\right)  $ is smooth it is well known that the
correct estimate is $c\left\vert s\right\vert ^{-\frac{d+1}{2}}$. With a more
careful analysis we show that this is the case also in our setting, even if we
do not assume smoothness at the flat point. Indeed notice that condition
(\ref{Deriv-Phi}) allows higher derivatives to blow up at the flat point. Let
$\gamma=2$. Then%
\begin{equation}
\left\vert K_{1}\left(  \mathbf{\zeta}\right)  \right\vert \leqslant\frac
{c}{\left\vert \left(  \xi,s\right)  \right\vert }\sum_{k=0}^{+\infty
}2^{-k\left(  d-1\right)  }\left\vert I_{k}\left(  2^{-k}\xi,-2^{-k\gamma
}s\right)  \right\vert \label{trePezzi}%
\end{equation}
By the previous lemma we have%
\[
\left\vert I_{k}\left(  2^{-k}\xi,-2^{-2k}s\right)  \right\vert \leqslant
\left\{
\begin{array}
[c]{ll}%
c\left(  1+\left\vert 2^{-2k}s\right\vert +\left\vert 2^{-k}\xi\right\vert
\right)  ^{-\frac{d-1}{2}} & \text{for every }\left(  \xi,s\right) \\
c_{M}\left(  1+\left\vert 2^{-2k}s\right\vert \right)  ^{-M} & \text{if }%
2^{k}\leqslant c_{1}\frac{\left\vert s\right\vert }{\left\vert \xi\right\vert
},\\
c_{M}\left(  1+\left\vert 2^{-k}\xi\right\vert \right)  ^{-M} & \text{if
}c_{2}\frac{\left\vert s\right\vert }{\left\vert \xi\right\vert }%
\leqslant2^{k}.
\end{array}
\right.
\]
so that%
\begin{align*}
&  \sum_{k=0}^{+\infty}2^{-k\left(  d-1\right)  }\left\vert I_{k}\left(
2^{-k}\xi,-2^{-2k}s\right)  \right\vert \\
&  \leqslant c\sum_{2^{k}\leqslant c_{1}\left\vert s\right\vert /\left\vert
\xi\right\vert }2^{-k\left(  d-1\right)  }\left(  1+\left\vert 2^{-2k}%
s\right\vert \right)  ^{-M}+c\sum_{c_{2}\left\vert s\right\vert /\left\vert
\xi\right\vert \leqslant2^{k}}2^{-k\left(  d-1\right)  }\left(  1+\left\vert
2^{-k}\xi\right\vert \right)  ^{-M}\\
&  +c\sum_{c_{1}\left\vert s\right\vert /\left\vert \xi\right\vert
<2^{k}<c_{2}\left\vert s\right\vert /\left\vert \xi\right\vert }2^{-k\left(
d-1\right)  }\left(  1+\left\vert 2^{-2k}s\right\vert +\left\vert 2^{-k}%
\xi\right\vert \right)  ^{-\frac{d-1}{2}}\\
&  =S_{1}+S_{2}+S_{3}.
\end{align*}
We have%
\begin{align*}
S_{1}  &  \leqslant c\sum_{k=0}^{+\infty}2^{-k\left(  d-1\right)  }\left(
1+\left\vert 2^{-2k}s\right\vert \right)  ^{-M}\\
&  \leqslant c\sum_{\left\vert s\right\vert ^{1/2}\leqslant2^{k}}2^{-k\left(
d-1\right)  }+c\left\vert s\right\vert ^{-M}\sum_{\left\vert s\right\vert
^{1/2}>2^{k}}2^{2kM}2^{-k\left(  d-1\right)  }\\
&  \leqslant c\left\vert s\right\vert ^{-\frac{d-1}{2}}%
\end{align*}
and%
\begin{align*}
S_{2}  &  \leqslant c\sum_{c_{2}\left\vert s\right\vert /\left\vert
\xi\right\vert \leqslant2^{k}}2^{-k\left(  d-1\right)  }\left(  1+\left\vert
2^{-k}\xi\right\vert \right)  ^{-M}\\
&  \leqslant c\sum_{\max\left(  c_{2}\left\vert s\right\vert /\left\vert
\xi\right\vert ,\left\vert \xi\right\vert \right)  \leqslant2^{k}}2^{-k\left(
d-1\right)  }+c\left\vert \xi\right\vert ^{-M}\sum_{c_{2}\left\vert
s\right\vert /\left\vert \xi\right\vert \leqslant2^{k}<\left\vert
\xi\right\vert }2^{Mk-k\left(  d-1\right)  }.
\end{align*}
Observe that%
\begin{align*}
\sum_{\max\left(  c_{2}\left\vert s\right\vert /\left\vert \xi\right\vert
,\left\vert \xi\right\vert \right)  \leqslant2^{k}}2^{-k\left(  d-1\right)  }
&  \leqslant c\min\left(  \left(  \frac{\left\vert \xi\right\vert }{\left\vert
s\right\vert }\right)  ^{d-1},\left\vert \xi\right\vert ^{-\left(  d-1\right)
}\right) \\
&  \leqslant c\left\vert s\right\vert ^{-\frac{d-1}{2}}\min\left(
\frac{\left\vert \xi\right\vert ^{d-1}}{\left\vert s\right\vert ^{\frac
{d-1}{2}}},\frac{\left\vert s\right\vert ^{\frac{d-1}{2}}}{\left\vert
\xi\right\vert ^{d-1}}\right)  \leqslant c\left\vert s\right\vert
^{-\frac{d-1}{2}}.
\end{align*}
Also, the series%
\[
\left\vert \xi\right\vert ^{-M}\sum_{c_{2}\left\vert s\right\vert /\left\vert
\xi\right\vert \leqslant2^{k}<\left\vert \xi\right\vert }2^{Mk-k\left(
d-1\right)  }%
\]
is non void only if $c_{2}\frac{\left\vert s\right\vert }{\left\vert
\xi\right\vert }<\left\vert \xi\right\vert $, that is $c\left\vert
s\right\vert ^{1/2}<\left\vert \xi\right\vert $. In this case we have%
\begin{align*}
c\left\vert \xi\right\vert ^{-M}\sum_{c_{2}\left\vert s\right\vert /\left\vert
\xi\right\vert \leqslant2^{k}<\left\vert \xi\right\vert }2^{Mk-k\left(
d-1\right)  }  &  \leqslant c\left\vert \xi\right\vert ^{-M}\sum_{\left\vert
\xi\right\vert >2^{k}}2^{Mk-k\left(  d-1\right)  }\leqslant c\left\vert
\xi\right\vert ^{-\left(  d-1\right)  }\\
&  \leqslant c\left\vert s\right\vert ^{-\frac{d-1}{2}}.
\end{align*}
We now turm to $S_{3}$ that contains a sum with a finite number of terms. We
have,%
\[
S_{3}\leqslant c\left\vert s\right\vert ^{-\frac{d-1}{2}}\sum_{c_{1}\left\vert
s\right\vert /\left\vert \xi\right\vert <2^{k}<c_{2}\left\vert s\right\vert
/\left\vert \xi\right\vert }1\leqslant c\left\vert s\right\vert ^{-\frac
{d-1}{2}}.
\]
Substituting into (\ref{trePezzi}) gives the estimate%
\[
\left\vert \widehat{\chi}_{B}\left(  \mathbf{\zeta}\right)  \right\vert
\leqslant c\left\vert s\right\vert ^{-\frac{d+1}{2}}%
\]
when $\gamma=2$. It remains to prove the second row in (\ref{Stima gamma>2}).
We have%
\begin{align*}
&  \left\vert K_{1}\left(  \mathbf{\zeta}\right)  \right\vert \\
&  \leqslant\frac{c}{\left\vert s\right\vert }\sum_{k=0}^{+\infty}2^{-k\left(
d-1\right)  }\left(  \left\vert 2^{-k\gamma}s\right\vert +\left\vert 2^{-k}%
\xi\right\vert \right)  ^{-\frac{d-1}{2}}\\
&  \leqslant\frac{c}{\left\vert s\right\vert }\left(  \left\vert s\right\vert
^{-\frac{d-1}{2}}\sum_{2^{k}\leqslant\left(  \left\vert s\right\vert
/\left\vert \xi\right\vert \right)  ^{\frac{1}{\gamma-1}}}2^{k\left(
d-1\right)  \left(  \frac{\gamma}{2}-1\right)  }+\left\vert \xi\right\vert
^{-\frac{d-1}{2}}\sum_{2^{k}>\left(  \left\vert s\right\vert /\left\vert
\xi\right\vert \right)  ^{\frac{1}{\gamma-1}}}2^{-k\frac{d-1}{2}}\right) \\
&  \leqslant c\left\vert \xi\right\vert ^{-\frac{\left(  d-1\right)  \left(
\gamma-2\right)  }{2\left(  \gamma-1\right)  }}\left\vert s\right\vert
^{-\frac{d-1}{2\left(  \gamma-1\right)  }-1}.
\end{align*}

\end{proof}

\begin{remark}
Let $\mathbf{z}\in\partial B,$ let $T_{\mathbf{z}}$ be the tangent hyperplane
to $\partial B$ in $\mathbf{z}$ and let%
\[
S\left(  \mathbf{z},\delta\right)  =\left\{  \mathbf{w}\in\partial
B:\operatorname{dist}\left(  \mathbf{w},T_{\mathbf{z}}\right)  <\delta
\right\}  .
\]
In \cite{BNW} it is proved that when the boundary of $B$ is smooth and of
finite type (every one dimensional tangent line to $\partial B$ makes finite
order of contact with $\partial B$), then%
\[
\left\vert \widehat{\chi}_{B}\left(  \mathbf{\zeta}\right)  \right\vert
\leqslant c\left\vert \mathbf{\zeta}\right\vert ^{-1}\left[  \sigma\left(
S\left(  \mathbf{z}^{+},\left\vert \mathbf{\zeta}\right\vert ^{-1}\right)
\right)  +\sigma\left(  S\left(  \mathbf{z}^{-},\left\vert \mathbf{\zeta
}\right\vert ^{-1}\right)  \right)  \right]
\]
where $\mathbf{z}^{+}$ and $\mathbf{z}^{-}$ are the two points on $\partial B$
with outer normal parallel to $\mathbf{\zeta}$ and $\sigma$ is the surface
measure. In our case $\partial B$ is not necessarily smooth, but the above
result in fact holds. Indeed, let $B$ as in Proposition \ref{Prop Stima chi},
and choose coordinates such that $\mathbf{z}=\left(  x,t\right)  $, the flat
point is the point $\left(  0,0\right)  $, its outward normal is $\left(
0,-1\right)  $ and that the relevant part of the surface $\partial B$ is
described by the equation $t=\Phi\left(  x\right)  $ with $\Phi\in S_{\gamma}%
$. Fix $\mathbf{z}=\left(  x,t\right)  \in\partial B$. Elementary geometric
observations lead to%
\begin{align}
\sigma\left(  S\left(  \mathbf{z},\delta\right)  \right)   &  \geqslant
c\sigma\left(  S\left(  \mathbf{0},c_{1}\delta\right)  \right)  \approx\left(
\delta^{1/\gamma}\right)  ^{d-1}~~~\text{for }\delta\geqslant c\left\vert
x\right\vert ^{\gamma},\label{1}\\
\sigma\left(  S\left(  \mathbf{z},\delta\right)  \right)   &  \geqslant
c\left(  \delta\left(  \left\vert x\right\vert ^{\gamma-2}\right)
^{-1}\right)  ^{\frac{d-1}{2}}~~~\text{for }\delta\leqslant c\left\vert
x\right\vert ^{\gamma}. \label{2}%
\end{align}
The unit normal to $\partial B$ in $\left(  x,\Phi\left(  x\right)  \right)  $
is $\frac{\left(  \nabla\Phi\left(  x\right)  ,-1\right)  }{\sqrt{\left\vert
\nabla\Phi\left(  x\right)  \right\vert ^{2}+1}}$. It follows that for a given
$\mathbf{\zeta}=\left(  \xi,s\right)  $ the point $\left(  x,\Phi\left(
x\right)  \right)  $ in $\partial B$ with normal in the direcion
$\mathbf{\zeta}$ satisfies $\left\vert \xi\right\vert /\left\vert s\right\vert
=\left\vert \nabla\Phi\left(  x\right)  \right\vert \approx\left\vert
x\right\vert ^{\gamma-1}$.

a) If $\left\vert s\right\vert \geqslant\left\vert \xi\right\vert ^{\gamma}$
then $\left\vert s\right\vert ^{1-\gamma}\geqslant\left(  \left\vert
\xi\right\vert /\left\vert s\right\vert \right)  ^{\gamma}\approx\left\vert
x\right\vert ^{\left(  \gamma-1\right)  \gamma}$. Hence%
\[
\frac{1}{\left\vert \mathbf{\zeta}\right\vert }\approx\frac{1}{\left\vert
s\right\vert }\geqslant c\left\vert x\right\vert ^{\gamma}%
\]
so that, by (\ref{1}),%
\[
\sigma\left(  S\left(  \mathbf{z},\left\vert \mathbf{\zeta}\right\vert
^{-1}\right)  \right)  \geqslant c\sigma\left(  S\left(  \mathbf{0}%
,c_{1}\left\vert \mathbf{\zeta}\right\vert ^{-1}\right)  \right)
\approx\left\vert \mathbf{\zeta}\right\vert ^{-\left(  d-1\right)  /\gamma
}\approx\left\vert s\right\vert ^{-\left(  d-1\right)  /\gamma}.
\]
b) If $\left\vert \xi\right\vert \leqslant\left\vert s\right\vert
\leqslant\left\vert \xi\right\vert ^{\gamma}$, then as before%
\[
\frac{1}{\left\vert \mathbf{\zeta}\right\vert }\approx\frac{1}{\left\vert
s\right\vert }\leqslant c\left\vert x\right\vert ^{\gamma}%
\]
and by (\ref{2})
\[
\sigma\left(  S\left(  \mathbf{z},\left\vert \mathbf{\zeta}\right\vert
^{-1}\right)  \right)  \geqslant c\left(  \frac{1}{\left\vert s\right\vert
}\left(  \frac{\left\vert \xi\right\vert }{\left\vert s\right\vert }\right)
^{\frac{2-\gamma}{\gamma-1}}\right)  ^{\frac{d-1}{2}}=c\left\vert
\xi\right\vert ^{-\left(  d-1\right)  \frac{\gamma-2}{2\left(  \gamma
-1\right)  }}\left\vert s\right\vert ^{-\frac{d-1}{2\left(  \gamma-1\right)
}}%
\]
c) If $\left\vert \xi\right\vert \geqslant\left\vert s\right\vert $, then
$\left\vert x\right\vert \approx1$and by (\ref{2})%
\[
\sigma\left(  S\left(  \mathbf{z},\left\vert \mathbf{\zeta}\right\vert
^{-1}\right)  \right)  \geqslant\left\vert \xi\right\vert ^{-\frac{d-1}{2}}.
\]
The geometric estimate
\[
\left\vert \widehat{\chi}_{B}\left(  \mathbf{\zeta}\right)  \right\vert
\leqslant c\left\vert \mathbf{\zeta}\right\vert ^{-1}\sigma\left(  S\left(
\mathbf{z},\left\vert \mathbf{\zeta}\right\vert ^{-1}\right)  \right)  .
\]
now follows from Proposition \ref{Prop Stima chi}.
\end{remark}

As said before, the above proposition is the main ingredient in the estimate
of the discrepancy associated to the convex body $B$. In particular it follows
that the directions where the Fourier transform has the slowest rate of decay
play a relevant role in the estimates of the discrepancy.

Actually the Fourier transform in a given direction depends on the two points
in $\partial B$ have normals in that direction. The interplay between the
contribution of these points is exploited in the following proposition.

\begin{proposition}
\label{Prop Sezioni}Let $B$ be a bounded convex body in $\mathbb{R}^{d}$.
Assume that $\partial B$ is smooth with everywhere positive Gaussian curvature
except at most at two points $P$ and $Q$ which are flat of order $\gamma_{P}$
and $\gamma_{Q}$ respectively and have outward unit normals $-\Theta$ and
$\Theta$. Let%
\[
S\left(  t\right)  =\left\vert \left\{  \mathbf{z}\in B:\mathbf{z}\cdot
\Theta=t\right\}  \right\vert
\]
be $\left(  d-1\right)  $-dimensional measures of the slices of $B$ that are
orthogonal to $\Theta$. The function $S\left(  t\right)  $ is supported in
$P\cdot\Theta\leqslant t\leqslant Q\cdot\Theta$ and is known to be smooth in
$P\cdot\Theta<t<Q\cdot\Theta$. Assume that there exist two smooth functions
$G_{P}\left(  r\right)  $ and $G_{Q}\left(  r\right)  $ with $G_{P}\left(
0\right)  \neq0$ and $G_{Q}\left(  0\right)  \neq0$ such that, for
$u\geqslant0$ sufficiently small%
\[
S\left(  P\cdot\Theta+u\right)  =u^{\frac{d-1}{\gamma_{P}}}G_{P}\left(
u^{1/\gamma_{P}}\right)
\]
and%
\[
S\left(  Q\cdot\Theta-u\right)  =u^{\frac{d-1}{\gamma_{Q}}}G_{Q}\left(
u^{1/\gamma_{Q}}\right)  .
\]
Then, as $\left\vert s\right\vert \rightarrow+\infty$,%
\begin{align*}
\widehat{\chi}_{B}\left(  s\Theta\right)   &  =e^{-2\pi is\Theta\cdot P}%
G_{P}\left(  0\right)  \frac{\Gamma\left(  \frac{d-1}{\gamma_{P}}+1\right)
}{\left(  2\pi\right)  ^{\frac{d-1}{\gamma_{P}}+1}}e^{-i\frac{\pi}{2}\left(
\frac{d-1}{\gamma_{P}}+1\right)  \operatorname{sgn}\left(  s\right)
}\left\vert s\right\vert ^{-1-\frac{d-1}{\gamma_{P}}}\\
&  +e^{-2\pi is\Theta\cdot Q}G_{Q}\left(  0\right)  \frac{\Gamma\left(
\frac{d-1}{\gamma_{Q}}+1\right)  }{\left(  2\pi\right)  ^{\frac{d-1}%
{\gamma_{Q}}+1}}e^{i\frac{\pi}{2}\left(  \frac{d-1}{\gamma_{Q}}+1\right)
\operatorname{sgn}\left(  s\right)  }\left\vert s\right\vert ^{-1-\frac
{d-1}{\gamma_{Q}}}\\
&  +O\left(  \left\vert s\right\vert ^{-1-\frac{d}{\max\left(  \gamma
_{P},\gamma_{Q}\right)  }}\right)  .
\end{align*}

\end{proposition}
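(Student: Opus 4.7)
The approach is to slice $B$ along hyperplanes orthogonal to $\Theta$ and reduce the claim to a one-dimensional Fourier asymptotic. Choosing coordinates with $\Theta$ as the last axis, Fubini gives
$$\widehat{\chi}_{B}(s\Theta) = \int_{P\cdot\Theta}^{Q\cdot\Theta} S(t)\, e^{-2\pi i s t}\, dt.$$
Since $S$ is smooth on the open interval $(P\cdot\Theta, Q\cdot\Theta)$, the asymptotic behaviour of this integral as $|s| \to \infty$ is governed entirely by the endpoints, where the non-smooth behaviour is prescribed by the hypotheses on $G_P$ and $G_Q$. I would introduce a smooth partition of unity on $[P\cdot\Theta, Q\cdot\Theta]$ into a cutoff $\phi_P$ localized on a small right-neighbourhood of $P\cdot\Theta$, a cutoff $\phi_Q$ localized on a small left-neighbourhood of $Q\cdot\Theta$, and an interior cutoff $\phi_{\mathrm{mid}}$ supported away from both endpoints. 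The piece with $\phi_{\mathrm{mid}}$ is the Fourier transform of a smooth compactly supported function, hence $O(|s|^{-N})$ for every $N$ by iterated integration by parts.

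For the contribution near $P$, substituting $t = P\cdot\Theta + u$ reduces one to
$$e^{-2\pi i s P\cdot\Theta}\int_0^\infty u^{(d-1)/\gamma_P}\, G_P(u^{1/\gamma_P})\, \phi_P(P\cdot\Theta + u)\, e^{-2\pi i s u}\, du.$$
I would Taylor expand $G_P(r) = G_P(0) + r\, G_P'(0) + O(r^2)$, turning the integrand into $G_P(0)\, u^{(d-1)/\gamma_P}$ times the cutoff plus correction terms of order $u^{d/\gamma_P}$ and higher. For each term I would invoke the standard one-sided asymptotic (as in Erd\'elyi's asymptotic expansions, or Stein, Chapter 8): for any smooth $\chi$ equal to $1$ near $0$ and compactly supported, and any $\alpha > -1$,
$$\int_0^\infty u^\alpha\, \chi(u)\, e^{-2\pi i s u}\, du = \Gamma(\alpha+1)\,(2\pi i s)^{-\alpha-1} + O(|s|^{-N}),$$
with $(is)^{-\alpha-1} = |s|^{-\alpha-1} e^{-i(\pi/2)(\alpha+1)\operatorname{sgn}(s)}$. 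Taking $\alpha = (d-1)/\gamma_P$, the leading term reproduces the first line of the claim, while the next Taylor correction contributes $O(|s|^{-1-d/\gamma_P})$.

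The $Q$ contribution is treated symmetrically: the substitution $t = Q\cdot\Theta - u$ flips the sign of the exponential to $e^{+2\pi i s u}$, which is equivalent to replacing $s$ by $-s$ in the endpoint formula and produces the $e^{+i(\pi/2)(\alpha_Q+1)\operatorname{sgn}(s)}$ factor with $\alpha_Q = (d-1)/\gamma_Q$, matching the second line; the leading correction is of order $O(|s|^{-1-d/\gamma_Q})$. Collecting the two corrections yields a total error bounded by the slower of the two decays, namely $O(|s|^{-1-d/\max(\gamma_P, \gamma_Q)})$, as claimed. The main obstacle is controlling the one-sided Fourier asymptotic for $u_+^\alpha\, \chi(u)$ with a quantitative remainder that is uniform in the cutoff, so that the contributions of the Taylor tail of $G_P$ and $G_Q$ as well as the overlap of the partition of unity really are negligible; this is handled either by Erd\'elyi's argument of repeatedly integrating by parts after subtracting the singular term, or by analytic continuation from the absolutely convergent range $-1<\alpha<0$.
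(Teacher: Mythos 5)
Your proposal is correct and follows essentially the same route as the paper: slice along $\Theta$ via Fubini, localize at the two endpoints with a smooth cutoff (the interior piece being rapidly decaying), Taylor-expand $G_P$ and $G_Q$, and apply the one-sided asymptotic $\int_0^\infty u^\alpha \chi(u) e^{-2\pi i s u}\,du = \Gamma(\alpha+1)(2\pi i s)^{-\alpha-1}+O(|s|^{-N})$ term by term, with the Taylor remainder killed by repeated integration by parts. The technical point you flag at the end (a quantitative remainder for $u_+^\alpha\chi$, obtained by reducing to the range $\alpha\in(-1,0)$) is exactly what the paper isolates as its Lemma on $\int_0^{+\infty}t^{\alpha}e^{-2\pi i s t}\eta(t)\,dt$.
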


Observe that when in a neighborhood of the points $P$ and $Q$ the boundary of
$B$ is smooth with positive Gaussian curvature $K\left(  P\right)  $ and
$K\left(  Q\right)  $ then we have $\gamma_{P}=\gamma_{Q}=\gamma=2$,
$G_{P}\left(  0\right)  =\frac{\left(  2\pi\right)  ^{\left(  d-1\right)  /2}%
}{\Gamma\left(  \frac{d+1}{2}\right)  }K^{-1/2}\left(  P\right)  $, and
$G_{Q}\left(  0\right)  =\frac{\left(  2\pi\right)  ^{\left(  d-1\right)  /2}%
}{\Gamma\left(  \frac{d+1}{2}\right)  }K^{-1/2}\left(  Q\right)  $. Hence we
obtain the classical formula%
\begin{align*}
\widehat{\chi}_{B}\left(  s\Theta\right)   &  =\frac{1}{2\pi}e^{-2\pi
is\Theta\cdot P}K^{-1/2}\left(  P\right)  e^{-i\frac{\pi}{2}\left(  \frac
{d-1}{2}+1\right)  \operatorname{sgn}\left(  s\right)  }\left\vert
s\right\vert ^{-\frac{d+1}{2}}\\
&  +\frac{1}{2\pi}e^{-2\pi is\Theta\cdot Q}K^{-1/2}\left(  Q\right)
e^{i\frac{\pi}{2}\left(  \frac{d-1}{2}+1\right)  \operatorname{sgn}\left(
s\right)  }\left\vert s\right\vert ^{-\frac{d+1}{2}}\\
&  +O\left(  \left\vert s\right\vert ^{-\frac{d+2}{2}}\right)  .
\end{align*}
See \cite{He} and \cite{Hl}. See also \cite[Corollary 7.7.15]{Ho}.

\begin{proof}
Let $\eta\left(  t\right)  $ be a smooth cutoff function with $\eta\left(
t\right)  =1$ if $\left\vert t\right\vert \leqslant\varepsilon$ and
$\eta\left(  t\right)  =0$ if $\left\vert t\right\vert \geqslant2\varepsilon$
with $\varepsilon$ small. Since $S\left(  t\right)  $ is smooth inside
$P\cdot\Theta<t<Q\cdot\Theta$, see \cite{Bi}, for every $N>0$ we have%
\begin{align*}
\widehat{\chi}_{B}\left(  s\Theta\right)   &  =\int_{\mathbb{R}^{d}}\chi
_{B}\left(  \mathbf{z}\right)  e^{-2\pi i\mathbf{z}\cdot s\Theta}%
d\mathbf{z}=\int_{-\infty}^{+\infty}\left(  \int_{\left\{  \mathbf{z}%
\cdot\Theta=t\right\}  }\chi_{B}\left(  \mathbf{z}\right)  d\mathbf{z}\right)
e^{-2\pi ist}dt\\
&  =\int_{P\cdot\Theta}^{Q\cdot\Theta}S\left(  t\right)  e^{-2\pi ist}dt\\
&  =\int_{0}^{+\infty}\eta\left(  u\right)  S\left(  P\cdot\Theta+u\right)
e^{-2\pi is\left(  P\cdot\Theta+u\right)  }du\\
&  +\int_{0}^{+\infty}\eta\left(  u\right)  S\left(  Q\cdot\Theta-u\right)
e^{-2\pi is\left(  Q\cdot\Theta-u\right)  }du+O\left(  \left\vert s\right\vert
^{-N}\right) \\
&  =e^{-2\pi isP\cdot\Theta}\int_{0}^{+\infty}u^{\frac{d-1}{\gamma_{P}}}%
G_{P}\left(  u^{1/\gamma_{P}}\right)  \eta\left(  u\right)  e^{-2\pi isu}du\\
&  +e^{-2\pi isQ\cdot\Theta}\int_{0}^{+\infty}u^{\frac{d-1}{\gamma_{Q}}}%
G_{Q}\left(  u^{1/\gamma_{Q}}\right)  \eta\left(  u\right)  e^{2\pi
isu}du+O\left(  \left\vert s\right\vert ^{-N}\right)  .
\end{align*}
It is enough to consider%
\[
K\left(  s\right)  =\int_{0}^{+\infty}u^{\frac{d-1}{\gamma}}G\left(
u^{1/\gamma}\right)  \eta\left(  u\right)  e^{-2\pi isu}du.
\]
Since $G\left(  r\right)  $ is smooth, for every $N>0$ we can write the Taylor
expansion%
\begin{align*}
K\left(  s\right)   &  =\sum_{k=0}^{N-1}\frac{G^{\left(  k\right)  }\left(
0\right)  }{k!}\int_{0}^{+\infty}u^{\frac{d-1+k}{\gamma}}\eta\left(  u\right)
e^{-2\pi isu}du\\
&  +\int_{0}^{+\infty}u^{\frac{d-1+N}{\gamma}}G_{N}\left(  u^{1/\gamma
}\right)  \eta\left(  u\right)  e^{-2\pi isu}du.
\end{align*}
For $N$ large enough, the function $u^{\frac{d-1+N}{\gamma}}G_{N}\left(
u^{1/\gamma}\right)  \eta\left(  u\right)  $ has enough bounded derivatives so
that a repeated integration by parts gives%
\[
\left\vert \int_{0}^{+\infty}\eta\left(  u\right)  G_{N}\left(  u^{1/\gamma
}\right)  u^{\frac{d-1+N}{\gamma}}e^{-2\pi isu}du\right\vert \leqslant
c\left\vert s\right\vert ^{-1-\frac{d}{\gamma}}.
\]
Finally, all other terms in the above sum have the form%
\[
\int_{0}^{+\infty}\eta\left(  u\right)  u^{\alpha}e^{-2\pi isu}du
\]
and can be estimated by the following lemma.
\end{proof}

\begin{lemma}
\label{Lemma alpha}If $\eta$ is as above then, for every $\alpha>-1$ and
$s\neq0$, we have%
\[
\int_{0}^{+\infty}t^{\alpha}e^{-2\pi ist}\eta\left(  t\right)  dt=\frac
{\Gamma\left(  \alpha+1\right)  }{\left(  2\pi\left\vert s\right\vert \right)
^{\alpha+1}}e^{-i\frac{\pi}{2}\left(  \alpha+1\right)  \operatorname{sgn}%
\left(  s\right)  }+O\left(  \left\vert s\right\vert ^{-N}\right)  .
\]

\end{lemma}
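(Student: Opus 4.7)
The plan is to split the integration at the boundaries of the transition region of $\eta$, compute the main contribution via the Gamma function identity (suitably regularized), and absorb the rest into the $O(|s|^{-N})$ remainder by repeated integration by parts.

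Concretely, I would write
$$\int_0^{+\infty}t^\alpha e^{-2\pi i s t}\eta(t)\,dt = \int_0^\varepsilon t^\alpha e^{-2\pi i s t}\,dt + \int_\varepsilon^{2\varepsilon}t^\alpha\eta(t)e^{-2\pi i s t}\,dt =: J_1(s)+J_2(s).$$
Since $\eta$ is $C^\infty$ and constant on either side of the transition interval $[\varepsilon,2\varepsilon]$, all derivatives of $\eta$ vanish at both endpoints of that interval, and $t^\alpha\eta(t)$ is smooth on $[\varepsilon,2\varepsilon]$ with all derivatives vanishing at $t=\varepsilon$ and $t=2\varepsilon$. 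Integrating $J_2(s)$ by parts $N$ times against the oscillation $e^{-2\pi i s t}$ kills all boundary contributions and gives $|J_2(s)|\leq c_N|s|^{-N}$ for every $N$.

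For $J_1(s)$ I would regularize: for $\delta>0$ set $J_1^\delta(s)=\int_0^\varepsilon t^\alpha e^{-(\delta+2\pi i s)t}\,dt$, so that $J_1^\delta(s)\to J_1(s)$ as $\delta\to 0^+$ by dominated convergence. Then split
$$J_1^\delta(s)=\int_0^{+\infty}t^\alpha e^{-(\delta+2\pi i s)t}\,dt - \int_\varepsilon^{+\infty}t^\alpha e^{-(\delta+2\pi i s)t}\,dt.$$
The first integral evaluates to $\Gamma(\alpha+1)/(\delta+2\pi i s)^{\alpha+1}$ by the classical Gamma identity analytically continued to complex $\zeta$ with $\mathrm{Re}\,\zeta>0$. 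The tail, integrated by parts $N$ times with $N>\alpha+1$ (so that the derivatives of $t^\alpha$ produce an integrand $t^{\alpha-N}$ absolutely integrable at infinity), reduces to boundary contributions at $t=\varepsilon$ of size $|s|^{-j-1}$ together with a remainder bounded uniformly in $\delta$ by $c_N|s|^{-N}$. Passing to the limit $\delta\to 0^+$ and invoking the principal-branch identity $(2\pi i s)^{\alpha+1}=(2\pi|s|)^{\alpha+1}e^{i(\pi/2)(\alpha+1)\mathrm{sgn}(s)}$ yields the stated main term.

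The only delicate point is that $\int_0^{+\infty}t^\alpha e^{-2\pi i s t}\,dt$ is not absolutely convergent when $\alpha\geq 0$, so the Gamma identity must be accessed through the $\delta$-regularization and one must control the tail uniformly in $\delta$. Once this uniform control is established by the integration-by-parts argument above, the limit $\delta\to 0^+$ commutes with the integrals and the lemma follows at once.
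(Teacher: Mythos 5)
Your overall strategy is viable and genuinely different from the paper's: the paper integrates by parts against $t^{\alpha}\eta(t)$ repeatedly to lower the exponent below $0$ and then quotes Erd\'elyi's asymptotic expansion for $\int_0^\infty t^{\lambda-1}e^{2\pi ist}\eta(t)\,dt$, whereas you access the Gamma identity directly through the regularization $e^{-\delta t}$. But as written your argument has a genuine error at the step handling $J_2$. It is not true that all derivatives of $t^{\alpha}\eta(t)$ vanish at $t=\varepsilon$: the derivatives of $\eta$ of order $\geqslant 1$ vanish there, but $\eta(\varepsilon)=1$, so by Leibniz
\[
\frac{d^{j}}{dt^{j}}\bigl[t^{\alpha}\eta(t)\bigr]\Big|_{t=\varepsilon}
=\alpha(\alpha-1)\cdots(\alpha-j+1)\,\varepsilon^{\alpha-j}\neq 0
\]
in general. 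Consequently $J_2(s)$ is \emph{not} $O(|s|^{-N})$; integrating it by parts leaves boundary contributions at $t=\varepsilon$ of sizes $|s|^{-1},|s|^{-2},\dots$ (equivalently, $t^{\alpha}\eta(t)\chi_{[\varepsilon,2\varepsilon]}(t)$ has a jump at $t=\varepsilon$, so its Fourier transform decays only like $|s|^{-1}$). Since for $\alpha>0$ the claimed main term is $|s|^{-\alpha-1}=o(|s|^{-1})$, these terms cannot simply be discarded.

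The proof can be repaired, but only by the observation you omit: the nonvanishing $\varepsilon$-boundary terms of $J_2$ cancel \emph{exactly} against the $\varepsilon$-boundary terms produced by integrating the tail $\int_{\varepsilon}^{+\infty}t^{\alpha}e^{-(\delta+2\pi is)t}\,dt$ by parts (both are $\frac{d^{j}}{dt^{j}}[t^{\alpha}]\big|_{t=\varepsilon}\,e^{-2\pi is\varepsilon}(2\pi is)^{-j-1}$ up to sign, and the signs are opposite because $\varepsilon$ is the lower endpoint of one integral and the upper endpoint of the other). You acknowledge that the tail produces boundary contributions of size $|s|^{-j-1}$ but never say what becomes of them, while simultaneously (and incorrectly) asserting that $J_2$ contributes none; the lemma therefore does not ``follow at once.'' Once the cancellation is stated and verified uniformly in $\delta$, the rest of your argument (the $\delta$-regularization, the identity $\int_0^{\infty}t^{\alpha}e^{-\zeta t}\,dt=\Gamma(\alpha+1)\zeta^{-\alpha-1}$ for $\operatorname{Re}\zeta>0$, and the principal-branch evaluation of $(2\pi is)^{\alpha+1}$) is correct and gives the stated main term.
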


The above result is not surprising since, in the sense of distributions,%
\[
\int_{0}^{+\infty}t^{\alpha}e^{-2\pi ist}dt=\frac{\Gamma\left(  \alpha
+1\right)  }{\left(  2\pi\left\vert s\right\vert \right)  ^{\alpha+1}%
}e^{-i\frac{\pi}{2}\left(  \alpha+1\right)  \operatorname{sgn}\left(
s\right)  }.
\]
See e.g. \cite{G-S}. The following is a direct proof.

\begin{proof}
Assume first $s>0$. An integration by parts gives%
\begin{align*}
&  \int_{0}^{+\infty}t^{\alpha}e^{-2\pi ist}\eta\left(  t\right)  dt=\frac
{1}{2\pi is}\int_{0}^{+\infty}e^{-2\pi ist}\frac{d}{dt}\left[  t^{\alpha}%
\eta\left(  t\right)  \right]  dt\\
=  &  \frac{\alpha}{2\pi is}\int_{0}^{+\infty}e^{-2\pi ist}t^{\alpha-1}%
\eta\left(  t\right)  dt+\frac{1}{2\pi is}\int_{0}^{+\infty}e^{-2\pi
ist}t^{\alpha}\eta^{\prime}\left(  t\right)  dt.
\end{align*}
Since $\operatorname*{supp}\eta^{\prime}\subset\left(  \varepsilon
,2\varepsilon\right)  $ the term $t^{\alpha}\eta^{\prime}\left(  t\right)  $
is smooth so that%
\[
\frac{1}{2\pi is}\int_{0}^{+\infty}e^{-2\pi ist}t^{\alpha}\eta^{\prime}\left(
t\right)  dt=O\left(  \left\vert s\right\vert ^{-N}\right)  .
\]
Repeating the integration by parts $k$ times, with $k\leqslant\alpha$ gives%
\begin{align*}
&  \int_{0}^{+\infty}t^{\alpha}e^{-2\pi ist}\eta\left(  t\right)  dt\\
&  =\frac{\alpha\left(  \alpha-1\right)  \cdots\left(  \alpha-k+1\right)
}{\left(  2\pi is\right)  ^{k}}\int_{0}^{+\infty}e^{-2\pi ist}t^{\alpha-k}%
\eta\left(  t\right)  dt+O\left(  \left\vert s\right\vert ^{-N}\right)  .
\end{align*}
Assume first that $\alpha$ is an integer and take $k=\alpha$. Then%
\begin{align*}
\int_{0}^{+\infty}t^{\alpha}e^{-2\pi ist}\eta\left(  t\right)  dt  &
=\frac{\alpha!}{\left(  2\pi is\right)  ^{\alpha}}\int_{0}^{+\infty}e^{-2\pi
ist}\eta\left(  t\right)  dt+O\left(  \left\vert s\right\vert ^{-N}\right) \\
&  =\frac{\alpha!}{\left(  2\pi is\right)  ^{\alpha+1}}+\frac{\alpha!}{\left(
2\pi is\right)  ^{\alpha+1}}\int_{0}^{+\infty}e^{-2\pi ist}\eta^{\prime
}\left(  t\right)  dt\\
&  =\frac{\alpha!}{\left(  2\pi is\right)  ^{\alpha+1}}+O\left(  \left\vert
s\right\vert ^{-N}\right)  .
\end{align*}
If $\alpha$ is not an integer we take $k=\left[  \alpha\right]  +1$. Then%
\begin{align*}
&  \int_{0}^{+\infty}t^{\alpha}e^{-2\pi ist}\eta\left(  t\right)  dt\\
&  =\frac{\alpha\left(  \alpha-1\right)  \cdots\left(  \alpha-\left[
\alpha\right]  \right)  }{\left(  2\pi is\right)  ^{\left[  \alpha\right]
+1}}\int_{0}^{+\infty}e^{-2\pi ist}t^{\lambda-1}\eta\left(  t\right)
dt+O\left(  \left\vert s\right\vert ^{-N}\right)  ,
\end{align*}
where $\lambda=\alpha-\left[  \alpha\right]  $. By \cite[(4) pag. 48]{E} we
have%
\begin{align*}
&  \int_{0}^{+\infty}e^{-2\pi ist}t^{\lambda-1}\eta\left(  t\right)
dt=\overline{\int_{0}^{+\infty}e^{2\pi ist}t^{\lambda-1}\eta\left(  t\right)
dt}\\
=  &  -\sum_{n=0}^{N-1}\frac{\Gamma\left(  n+\lambda\right)  }{n!}e^{-\pi
i\left(  n+\lambda-2\right)  /2}\eta^{\left(  n\right)  }\left(  0\right)
\left(  2\pi s\right)  ^{-n-\lambda}+O\left(  s^{-N}\right)
\end{align*}
and since $\eta^{\left(  n\right)  }\left(  0\right)  =0$ for every $n>0$ and
$\eta\left(  0\right)  =1$ we obtain%
\begin{align*}
&  \int_{0}^{+\infty}t^{\alpha}e^{-2\pi ist}\eta\left(  t\right)  dt\\
=  &  \frac{\alpha\left(  \alpha-1\right)  \cdots\left(  \alpha-\left[
\alpha\right]  \right)  }{\left(  2\pi is\right)  ^{\left[  \alpha\right]
+1}}\int_{0}^{+\infty}e^{-2\pi ist}t^{\lambda-1}\eta\left(  t\right)
dt+O\left(  \left\vert s\right\vert ^{-N}\right) \\
=  &  \frac{\Gamma\left(  \alpha+1\right)  }{\left(  2\pi s\right)
^{\alpha+1}}e^{-\frac{\pi}{2}i\left(  \alpha+1\right)  }+O\left(
s^{-N}\right)
\end{align*}
also in this case.

Let now $s<0$. Then%
\begin{align*}
\int_{0}^{+\infty}t^{\alpha}e^{-2\pi ist}\eta\left(  t\right)  dt  &
=\overline{\int_{0}^{+\infty}t^{\alpha}e^{-2\pi i\left(  -s\right)  t}%
\eta\left(  t\right)  dt}\\
&  =\frac{\Gamma\left(  \alpha+1\right)  }{\left(  2\pi\left\vert s\right\vert
\right)  ^{\alpha+1}}e^{i\frac{\pi}{2}\left(  \alpha+1\right)  }+O\left(
\left\vert s\right\vert ^{-N}\right)
\end{align*}

\end{proof}

In the next proposition we show that assumptions of Proposition
\ref{Prop Sezioni} are satisfied when the flat points are as in Proposition
\ref{Prop H}.

\begin{proposition}
\label{Prop asymp G}Let $\gamma>1$ and let $B$ be a bounded convex body in
$\mathbb{R}^{d}$. Let $U$ be a bounded open neighborhood of the origin in
$\mathbb{R}^{d-1}$ and let $H\left(  x\right)  \in C^{\infty}\left(  U\right)
$ such that $H\left(  0\right)  =0$, $\nabla H\left(  0\right)  =0$ and
$\operatorname{Hess}H\left(  0\right)  $ positive definite (see Proposition
\ref{Prop H}). Assume there exists a neighborhood of the origin $W\subset
\mathbb{R}^{d}$ such that, in suitable coordinates,%
\[
\partial B\cap W=\left\{  \left(  x,t\right)  \in\mathbb{R}^{d}:t=\left(
H\left(  x\right)  \right)  ^{\gamma/2}\right\}  \cap W.
\]
As before, let%
\[
S\left(  t\right)  =\left\vert \left\{  x\in\mathbb{R}^{d-1}:\left(
x,t\right)  \in B\right\}  \right\vert .
\]
Then, there exists a smooth function $G\left(  r\right)  $ such that for $t>0$
sufficiently small we have%
\[
S\left(  t\right)  =t^{\frac{d-1}{\gamma}}G\left(  t^{1/\gamma}\right)
\]
with $G\left(  0\right)  $ equal to the $\left(  d-1\right)  $-dimensional
measure of the ellipsoid%
\[
\left\{  x\in\mathbb{R}^{d-1}:\frac{1}{2}\sum_{j,k=1}^{d-1}\frac{\partial
^{2}H}{\partial x_{j}\partial x_{k}}\left(  0\right)  x_{j}x_{k}%
\leqslant1\right\}  .
\]

\end{proposition}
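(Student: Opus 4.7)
The plan is to reduce the problem to the standard computation of a sublevel-set volume for a Morse function, and then read off the leading behavior by an anisotropic rescaling.

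First, I would observe that since $H(0)=0$, $\nabla H(0)=0$, and the Hessian $A:=\mathrm{Hess}\,H(0)$ is positive definite, $H$ controls the slice shape for small heights. Concretely, for $t>0$ small enough the slice $\{x:(x,t)\in B\}$ lies entirely inside the coordinate neighborhood $W$, and hence equals $\{x\in\mathbb{R}^{d-1}:H(x)\leq t^{2/\gamma}\}$. Writing $V(u):=|\{H\leq u\}|$, this gives $S(t)=V(t^{2/\gamma})$, so it suffices to expand $V(u)$ for small $u>0$ in the form $V(u)=u^{(d-1)/2}G(u^{1/2})$ with $G$ smooth and $G(0)$ equal to the measure of the target ellipsoid $E:=\{w:\tfrac12 w^T A w\leq 1\}$.

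Next, I would apply the Morse lemma: there is a smooth diffeomorphism $y=\phi(x)$ of a neighborhood of $0$, with $\phi(0)=0$, such that $H(x)=\tfrac12\,\phi(x)^T A\,\phi(x)$. Exploiting the $O(d-1)$ gauge freedom in the Morse normal form, I would arrange $\phi'(0)=I$ so that the Jacobian $J(y):=|\det(\phi^{-1})'(y)|$ is smooth near $0$ and satisfies $J(0)=1$. Changing variables,
$$V(u)=\int_{\{\frac12 y^T A y\leq u\}}J(y)\,dy.$$
The key step is the anisotropic rescaling $y=\sqrt{u}\,w$, which carries the region onto the fixed ellipsoid $E$ and produces $dy=u^{(d-1)/2}dw$, so that
$$V(u)=u^{(d-1)/2}\int_E J(\sqrt{u}\,w)\,dw.$$
Setting $G(r):=\int_E J(rw)\,dw$, smoothness at $r=0$ follows by differentiation under the integral sign (justified by smoothness of $J$ and boundedness of $E$). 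Plugging in $u=t^{2/\gamma}$ yields $S(t)=t^{(d-1)/\gamma}G(t^{1/\gamma})$, and $G(0)=J(0)\,|E|=|E|$ is precisely the $(d-1)$-dimensional volume of the ellipsoid in the statement.

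The main delicacy is the choice of Morse chart: the textbook statement $H=|\psi|^2$ would introduce a nontrivial Jacobian $1/|\det\psi'(0)|$ and force one to verify by hand that, together with the unit-ball volume $|B_1^{d-1}|$, it reconstructs $|E|$. Choosing the Morse chart so that $\phi'(0)=I$ and the target quadratic form is exactly $\tfrac12 y^T A y$ makes the identification $G(0)=|E|$ automatic, and this normalization is the only piece of algebraic bookkeeping that requires care.
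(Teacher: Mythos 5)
Your argument is correct and follows essentially the same route as the paper: reduce to the sublevel-set volume of $H$, apply the Morse lemma, perform the anisotropic rescaling, and obtain the smoothness of $G$ by differentiating the Jacobian under the integral sign. The only difference is in the bookkeeping you yourself flag: the paper uses the standard normal form $H(\Psi(y))=\left\vert y\right\vert ^{2}$ and identifies $G(0)$ a posteriori by undoing the change of variables in a limit, $G(0)=\lim_{r\rightarrow0}\left\vert \left\{ x:H(rx)/r^{2}\leqslant1\right\} \right\vert$, whereas you normalize the Morse chart so that $\phi'(0)=I$ and the target form is $\tfrac{1}{2}y^{T}Ay$, making $G(0)=\left\vert E\right\vert$ immediate; the two devices are equivalent.
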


\begin{proof}
For $t$ small enough we have%
\[
S\left(  t\right)  =\int_{\left\{  x\in\mathbb{R}^{d-1}:t\geqslant\left(
H\left(  x\right)  \right)  ^{\gamma/2}\right\}  }dx.
\]
By Morse's lemma (see \cite[p. 346]{Stein}), there exists a diffeomorphism
$\Psi\left(  y\right)  $ between two small neighborhoods of the origin in
$\mathbb{R}^{d-1}$ such that%
\[
H\left(  \Psi\left(  y\right)  \right)  =\left\vert y\right\vert ^{2}.
\]
Then,%
\begin{align*}
S\left(  t\right)   &  =\int_{\left\{  x\in\mathbb{R}^{d-1}:t\geqslant\left(
H\left(  x\right)  \right)  ^{\gamma/2}\right\}  }dx=\int_{\left\{  \left\vert
y\right\vert \leqslant t^{1/\gamma}\right\}  }J_{\Psi}\left(  y\right)  dy\\
&  =t^{\frac{d-1}{\gamma}}\int_{\left\{  \left\vert u\right\vert
\leqslant1\right\}  }J_{\Phi}\left(  t^{1/\gamma}u\right)  du=t^{\frac
{d-1}{\gamma}}G\left(  t^{1/\gamma}\right)
\end{align*}
where%
\[
G\left(  r\right)  =\int_{\left\{  \left\vert z\right\vert \leqslant1\right\}
}J_{\Phi}\left(  ru\right)  du.
\]
Finally observe that%
\begin{align*}
G\left(  0\right)   &  =\lim_{r\rightarrow0}\int_{\left\{  \left\vert
u\right\vert \leqslant1\right\}  }J_{\Phi}\left(  ru\right)  du=\lim
_{r\rightarrow0}\frac{1}{r^{d-1}}\int_{\left\{  \left\vert w\right\vert
\leqslant r\right\}  }J_{\Phi}\left(  w\right)  dw\\
&  =\lim_{r\rightarrow0}\frac{1}{r^{d-1}}\int_{\left\{  \left\vert
w\right\vert ^{2}\leqslant r^{2}\right\}  }J_{\Phi}\left(  w\right)  dw\\
&  =\lim_{r\rightarrow0}\frac{1}{r^{d-1}}\int_{\left\{  H\left(  y\right)
\leqslant r^{2}\right\}  }dy=\lim_{r\rightarrow0}\int_{\left\{  \frac{H\left(
rx\right)  }{r^{2}}\leqslant1\right\}  }dx\\
&  =\left\vert \left\{  x\in\mathbb{R}^{d-1}:\frac{1}{2}\sum_{j,k=1}%
^{d-1}\frac{\partial^{2}H}{\partial x_{j}\partial x_{k}}\left(  0\right)
x_{j}x_{k}\leqslant1\right\}  \right\vert .
\end{align*}

\end{proof}

\section{Proofs of the results}

\begin{proof}
[Proof of Proposition \ref{Prop H}]Let $\alpha$ be a multi-index. It is not
difficult to prove by induction on $\left\vert \alpha\right\vert $ that
\[
\frac{\partial^{\left\vert \alpha\right\vert }\Phi}{\partial x^{\alpha}%
}\left(  x\right)
\]
is a finite sum of terms of the form%
\[
c\left[  H\left(  x\right)  \right]  ^{\gamma/2-k}\frac{\partial^{\left\vert
\beta_{1}\right\vert }H}{\partial x_{\beta_{1}}}\left(  x\right)  \times
\cdots\times\frac{\partial^{\left\vert \beta_{k}\right\vert }H}{\partial
x_{\beta_{k}}}\left(  x\right)
\]
with $k\leqslant\left\vert \alpha\right\vert $ and multi-indices $\beta
_{1},\ldots,\beta_{k}$ such that $\left\vert \beta_{1}\right\vert
+\cdots+\left\vert \beta_{k}\right\vert =\left\vert \alpha\right\vert $. Since
$\operatorname{Hess}H\left(  0\right)  $ is positive definite there are
positive constants $c_{1}$ and $c_{2}$ such that in a neighborhood of the
origin%
\[
c_{1}\left\vert x\right\vert ^{2}\leqslant H\left(  x\right)  \leqslant
c_{2}\left\vert x\right\vert ^{2}%
\]
and
\[
\left\vert \frac{\partial H}{\partial x_{j}}\left(  x\right)  \right\vert
\leqslant c_{2}\left\vert x\right\vert .
\]
Moreover, since $H\left(  x\right)  $ is smooth%
\[
\left\vert \frac{\partial^{\left\vert \beta_{j}\right\vert }H}{\partial
x_{\beta_{j}}}\left(  x\right)  \right\vert \leqslant c\left\vert x\right\vert
^{\max\left(  2-\left\vert \beta_{j}\right\vert ,0\right)  }\leqslant
c\left\vert x\right\vert ^{2-\left\vert \beta_{j}\right\vert }.
\]
It follows that%
\begin{align*}
&  \left\vert \left[  H\left(  x\right)  \right]  ^{\gamma/2-k}\frac
{\partial^{\left\vert \beta_{1}\right\vert }H}{\partial x_{\beta_{1}}}\left(
x\right)  \times\cdots\times\frac{\partial^{\left\vert \beta_{k}\right\vert
}H}{\partial x_{\beta_{k}}}\left(  x\right)  \right\vert \\
&  \leqslant c\left(  \left\vert x\right\vert ^{2}\right)  ^{\gamma
/2-k}\left\vert x\right\vert ^{2-\left\vert \beta_{1}\right\vert }\times
\cdots\times\left\vert x\right\vert ^{2-\left\vert \beta_{k}\right\vert
}\leqslant c\left\vert x\right\vert ^{\gamma-2k}\left\vert x\right\vert
^{2k-\left(  \left\vert \beta_{1}\right\vert +\cdots+\left\vert \beta
_{k}\right\vert \right)  }\\
&  \leqslant c\left\vert x\right\vert ^{\gamma-\left\vert \alpha\right\vert }.
\end{align*}
This proves (\ref{Deriv-Phi}). To prove (\ref{Eigenvalues}) let us write%
\begin{align*}
&  \frac{\partial^{2}\Phi}{\partial x_{j}\partial x_{k}}\left(  x\right) \\
&  =\frac{\gamma}{2}\left(  \gamma/2-1\right)  \left[  H\left(  x\right)
\right]  ^{\gamma/2-2}\frac{\partial H}{\partial x_{j}}\left(  x\right)
\frac{\partial H}{\partial x_{k}}\left(  x\right)  +\frac{\gamma}{2}\left[
H\left(  x\right)  \right]  ^{\gamma/2-1}\frac{\partial^{2}H}{\partial
x_{j}\partial x_{k}}\left(  x\right) \\
&  =\frac{\gamma}{2}\left[  H\left(  x\right)  \right]  ^{\gamma/2-1}\left(
\frac{\partial^{2}H}{\partial x_{j}\partial x_{k}}\left(  x\right)  +\left(
\gamma/2-1\right)  \frac{\frac{\partial H}{\partial x_{j}}\left(  x\right)
\frac{\partial H}{\partial x_{k}}\left(  x\right)  }{H\left(  x\right)
}\right)
\end{align*}
so that%
\[
\operatorname{Hess}\Phi\left(  x\right)  =\frac{\gamma}{2}\left[  H\left(
x\right)  \right]  ^{\gamma/2-1}\operatorname{Hess}M\left(  x\right)
\]
where $M\left(  x\right)  $ is the matrix with entries%
\[
\frac{\partial^{2}H}{\partial x_{j}\partial x_{k}}\left(  x\right)  +\left(
\gamma/2-1\right)  \frac{\frac{\partial H}{\partial x_{j}}\left(  x\right)
\frac{\partial H}{\partial x_{k}}\left(  x\right)  }{H\left(  x\right)  }.
\]
Let $A=\operatorname{Hess}H\left(  0\right)  $, since%
\begin{align*}
H\left(  x\right)   &  =\frac{1}{2}x^{T}Ax+O\left(  \left\vert x\right\vert
^{3}\right)  ,\\
\nabla H\left(  x\right)   &  =Ax+O\left(  \left\vert x\right\vert ^{2}\right)
\\
\operatorname{Hess}H\left(  x\right)   &  =A+O\left(  \left\vert x\right\vert
\right)
\end{align*}
we have%
\begin{align*}
M\left(  x\right)   &  =A+O\left(  \left\vert x\right\vert \right)  +\left(
\gamma/2-1\right)  \frac{\left(  Ax+O\left(  \left\vert x\right\vert
^{2}\right)  \right)  \left(  Ax+O\left(  \left\vert x\right\vert ^{2}\right)
\right)  ^{T}}{\frac{1}{2}x^{T}Ax+O\left(  \left\vert x\right\vert
^{3}\right)  }\\
&  =A+\left(  \gamma-2\right)  \frac{Ax\left(  Ax\right)  ^{T}}{x^{T}%
Ax}+O\left(  \left\vert x\right\vert \right)  .
\end{align*}
Let us show that the matrix%
\[
A+\left(  \gamma-2\right)  \frac{Ax\left(  Ax\right)  ^{T}}{x^{T}Ax}%
\]
is positive definite. Indeed, for all $y\in\mathbb{R}^{d-1}$ we have%
\[
y^{T}\left(  A+\left(  \gamma-2\right)  \frac{Ax\left(  Ax\right)  ^{T}}%
{x^{T}Ax}\right)  y=y^{T}Ay+\left(  \gamma-2\right)  \frac{\left(
y^{T}Ax\right)  ^{2}}{x^{T}Ax}.
\]
When $\gamma\geqslant2$ we easily obtain%
\[
y^{T}\left(  A+\left(  \gamma-2\right)  \frac{Ax\left(  Ax\right)  ^{T}}%
{x^{T}Ax}\right)  y\geqslant y^{T}Ay\geqslant\lambda_{1}\left\vert
y\right\vert ^{2}%
\]
where $\lambda_{1}$ is the smallest eigenvalue of $A$. For $1<\gamma<2$, by
Cauchy-Schwarz inequality for the inner product defined by $\left\langle
y,x\right\rangle =y^{T}Ax$ we have
\[
\frac{\left(  y^{T}Ax\right)  ^{2}}{x^{T}Ax}\leqslant\frac{\left(
y^{T}Ay\right)  \left(  x^{T}Ax\right)  }{x^{T}Ax}=y^{T}Ay.
\]
Hence%
\begin{align*}
y^{T}\left(  A+\left(  \gamma-2\right)  \frac{Ax\left(  Ax\right)  ^{T}}%
{x^{T}Ax}\right)  y  &  \geqslant y^{T}Ay+\left(  \gamma-2\right)  y^{T}Ay\\
&  =\left(  \gamma-1\right)  y^{T}Ay\geqslant\left(  \gamma-1\right)
\lambda_{1}\left\vert y\right\vert ^{2}.
\end{align*}
Let $\mu_{1}\left(  x\right)  $ be the smallest eigenvalue of
$\operatorname{Hess}\Phi\left(  x\right)  .$ We want to show that $\mu
_{1}\left(  x\right)  \geqslant c\left\vert x\right\vert ^{\gamma-2}$. This is
equivalent to show that%
\[
y^{T}\operatorname{Hess}\Phi\left(  x\right)  y\geqslant c\left\vert
x\right\vert ^{\gamma-2}\left\vert y\right\vert ^{2}.
\]
We have%
\begin{align*}
y^{T}\operatorname{Hess}\Phi\left(  x\right)  y  &  =\frac{\gamma}{2}\left[
H\left(  x\right)  \right]  ^{\gamma/2-1}y^{T}\left(  A+\left(  \gamma
-2\right)  \frac{Ax\left(  Ax\right)  ^{T}}{x^{T}Ax}\right)  y\\
&  +\left[  H\left(  x\right)  \right]  ^{\gamma/2-1}y^{T}O\left(  \left\vert
x\right\vert \right)  y\\
&  \geqslant c_{1}\left(  \left\vert x\right\vert ^{2}\right)  ^{\gamma
/2-1}\left\vert y\right\vert ^{2}-c_{2}\left(  \left\vert x\right\vert
^{2}\right)  ^{\gamma/2-1}\left\vert x\right\vert \left\vert y\right\vert
^{2}\\
&  =c_{1}\left\vert x\right\vert ^{\gamma-2}\left\vert y\right\vert ^{2}%
-c_{2}\left\vert x\right\vert ^{\gamma-2}\left\vert x\right\vert \left\vert
y\right\vert ^{2}\geqslant c\left\vert x\right\vert ^{\gamma-2}\left\vert
y\right\vert ^{2}%
\end{align*}
for $\left\vert x\right\vert $ small enough.
\end{proof}

\bigskip

To prove the theorems and the corollary it is convenient to introduce a
mollified discrepancy.

\begin{lemma}
\label{Lemma Mollif}Let $\varphi\left(  \mathbf{z}\right)  $ be a compactly
supported smooth function in $\mathbb{R}^{d}$ with integral $1$. Then, if the
support of $\varphi\left(  \mathbf{z}\right)  $ is sufficiently small, for
every $0<\varepsilon<1$ and $R>1$ we have%
\[
\varepsilon^{-d}\varphi\left(  \varepsilon^{-1}\cdot\right)  \ast\chi_{\left(
R-\varepsilon\right)  B}\left(  \mathbf{z}\right)  \leqslant\chi_{RB}\left(
\mathbf{z}\right)  \leqslant\varepsilon^{-d}\varphi\left(  \varepsilon
^{-1}\cdot\right)  \ast\chi_{\left(  R+\varepsilon\right)  B}\left(
\mathbf{z}\right)  .
\]
In particular,%
\[
\left\vert B\right\vert \left(  \left(  R-\varepsilon\right)  ^{d}%
-R^{d}\right)  +D_{\varepsilon,R-\varepsilon}\left(  \mathbf{z}\right)
\leqslant D_{R}\left(  \mathbf{z}\right)  \leqslant\left\vert B\right\vert
\left(  \left(  R+\varepsilon\right)  ^{d}-R^{d}\right)  +D_{\varepsilon
,R+\varepsilon}\left(  \mathbf{z}\right)  ,
\]
where%
\[
D_{\varepsilon,R}\left(  \mathbf{z}\right)  =R^{d}\sum_{\mathbf{0}%
\neq\mathbf{m\in}\mathbb{Z}^{d}}\widehat{\varphi}\left(  \varepsilon
\mathbf{m}\right)  \widehat{\chi}_{B}\left(  R\mathbf{m}\right)  e^{2\pi
i\mathbf{m\cdot z}}.
\]

\end{lemma}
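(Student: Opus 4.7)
The lemma decomposes into a pointwise sandwich for $\chi_{RB}(\mathbf{z})$, which is a straightforward consequence of convexity, and its reformulation in terms of $D_R$ via Poisson summation. Implicitly, $\varphi$ should be taken non-negative; otherwise the right-hand side of the upper bound could be negative when $\mathbf{z}\notin RB$. Since the $L^p$ norm of $D_R$ is invariant under translation of $B$, I would also translate $B$ so that $\mathbf{0}$ lies in the interior of $B$, and pick $r_0>0$ with $\{|\mathbf{w}|<r_0\}\subset B\cap(-B)$; then I impose $\operatorname{supp}\varphi\subset\{|\mathbf{w}|<r_0\}$. This is what ``sufficiently small'' means.

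\textbf{Pointwise sandwich.} For the upper bound, suppose $\mathbf{z}=R\mathbf{b}$ with $\mathbf{b}\in B$, and let $\mathbf{u}\in\operatorname{supp}\varphi$. Then $-\mathbf{u}\in B$, and the identity
$$\frac{\mathbf{z}-\varepsilon\mathbf{u}}{R+\varepsilon}=\frac{R}{R+\varepsilon}\mathbf{b}+\frac{\varepsilon}{R+\varepsilon}(-\mathbf{u})$$
exhibits the left-hand side as a convex combination of elements of $B$, so $\mathbf{z}-\varepsilon\mathbf{u}\in(R+\varepsilon)B$. Integrating against $\varphi(\mathbf{u})\,d\mathbf{u}$ (which has mass $1$) gives the desired upper bound when $\mathbf{z}\in RB$; when $\mathbf{z}\notin RB$ the bound is trivial from $\varphi\geq 0$. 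For the lower bound, the argument is dual: if $\mathbf{z}-\varepsilon\mathbf{u}=(R-\varepsilon)\mathbf{b}$ for some $\mathbf{b}\in B$ and $\mathbf{u}\in\operatorname{supp}\varphi\subset B$, then
$$\frac{\mathbf{z}}{R}=\frac{R-\varepsilon}{R}\mathbf{b}+\frac{\varepsilon}{R}\mathbf{u}\in B,$$
so $\mathbf{z}\in RB$; contrapositively, if $\mathbf{z}\notin RB$ the convolution $\varepsilon^{-d}\varphi(\varepsilon^{-1}\cdot)\ast\chi_{(R-\varepsilon)B}(\mathbf{z})$ vanishes, while on $\{\chi_{RB}=1\}$ the convolution is at most $\int\varphi=1$.

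\textbf{Passage to the discrepancy.} Summing the pointwise inequalities over $\mathbf{m}\in\mathbb{Z}^d$ produces periodic functions on both sides. Using $\widehat{\chi}_{rB}(\mathbf{\zeta})=r^d\widehat{\chi}_B(r\mathbf{\zeta})$ together with $\widehat{\varepsilon^{-d}\varphi(\varepsilon^{-1}\cdot)}(\mathbf{\zeta})=\widehat{\varphi}(\varepsilon\mathbf{\zeta})$, Poisson summation gives
$$\sum_{\mathbf{m}\in\mathbb{Z}^d}\bigl[\varepsilon^{-d}\varphi(\varepsilon^{-1}\cdot)\ast\chi_{rB}\bigr](\mathbf{z}+\mathbf{m})=r^d|B|+r^d\sum_{\mathbf{m}\neq\mathbf{0}}\widehat{\varphi}(\varepsilon\mathbf{m})\widehat{\chi}_B(r\mathbf{m})\,e^{2\pi i\mathbf{m}\cdot\mathbf{z}},$$
the $\mathbf{m}=\mathbf{0}$ contribution being $\widehat{\varphi}(\mathbf{0})\,r^d|B|=r^d|B|$ and the tail being exactly $D_{\varepsilon,r}(\mathbf{z})$. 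Meanwhile the periodization of $\chi_{RB}$ equals $R^d|B|+D_R(\mathbf{z})$. Applying the sandwich with $r=R\pm\varepsilon$ and subtracting $R^d|B|$ yields the stated inequalities.

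\textbf{Main obstacle.} The only genuinely geometric step is ensuring that the convex combinations above stay inside $B$, i.e.\ that $\operatorname{supp}\varphi\subset B\cap(-B)$. This is handled by the harmless translation placing $\mathbf{0}$ in the interior of $B$. Convergence of the Poisson series is not an issue, since $\widehat{\varphi}$ is rapidly decreasing and the decay estimates for $\widehat{\chi}_B$ (cf.\ Proposition \ref{Prop Stima chi}) are more than enough to justify termwise manipulation.
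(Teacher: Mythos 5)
Your argument is correct, and there is nothing in the paper to compare it against: the authors do not prove Lemma \ref{Lemma Mollif} but simply cite \cite[p.~195]{BGiT}, where essentially the convexity-plus-Poisson-summation argument you give is the standard one. Your two preliminary normalizations are genuinely needed and worth making explicit: the pointwise sandwich does require $\varphi\geqslant 0$ (only to handle the points $\mathbf{z}\notin RB$ for the upper bound and $\mathbf{z}\in RB$ for the lower bound; at the remaining points the convolution equals $0$ or $1$ exactly), and it does require $\mathbf{0}\in\operatorname{int}B$ so that one can take $\operatorname{supp}\varphi\subset B\cap(-B)$ --- for a body with $\mathbf{0}\notin B$ the two inclusions $\operatorname{supp}\varphi\subset B$ and $\operatorname{supp}\varphi\subset -B$ are incompatible and the stated inequalities fail for every $\varphi$. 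Since translating $B$ only translates $D_R$ in $\mathbf{z}$ (and replaces $\varphi$ by a translate in the sandwich), and the lemma is only ever used through $L^p$ and sup norms, this reduction is harmless, as you say. One further remark you may wish to record: the non-negativity you correctly identify as implicit sits in some tension with the paper's proof of Theorem \ref{Theorem B}, which invokes this lemma with the cut-off of Lemma \ref{Vandermonde}; a function whose Fourier transform has all derivatives up to order $M\geqslant 2$ vanishing at the origin cannot be non-negative with integral $1$, so strictly speaking that application needs an extra (routine but not stated) adjustment. The Poisson summation step is justified exactly as you indicate, since $\widehat{\varphi}(\varepsilon\mathbf{m})$ is rapidly decreasing and $\widehat{\chi}_B$ is bounded, so the series converges absolutely to the continuous periodization of the smooth, compactly supported function $\varepsilon^{-d}\varphi(\varepsilon^{-1}\cdot)\ast\chi_{(R\pm\varepsilon)B}$.
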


The above lemma is well known. See e.g. \cite[pag. 195]{BGiT} for a proof.

Also the following result is well known, the following is elementary proof.

\begin{lemma}
\label{Vandermonde}For every integer $M>0$ and every neighborhood $U$ of the
origin in $\mathbb{R}^{d}$ there exists a smooth function $\varphi\left(
\mathbf{z}\right)  $ supported in $U$ such that%
\[
\widehat{\varphi}\left(  \mathbf{0}\right)  =1
\]
and for every multi-index $\alpha$, with $0<\left\vert \alpha\right\vert
\leqslant M$%
\[
\frac{\partial^{\left\vert \alpha\right\vert }\widehat{\varphi}}%
{\partial\mathbf{\zeta}^{\alpha}}\left(  \mathbf{ 0}\right)  =0.
\]

\end{lemma}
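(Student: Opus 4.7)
The plan is to translate the conditions on $\widehat{\varphi}$ into moment conditions on $\varphi$ and then solve them with a Vandermonde construction. Differentiating $\widehat{\varphi}(\mathbf{\zeta})=\int\varphi(\mathbf{z})e^{-2\pi i\mathbf{\zeta}\cdot\mathbf{z}}d\mathbf{z}$ under the integral sign gives
$$\frac{\partial^{|\alpha|}\widehat{\varphi}}{\partial\mathbf{\zeta}^{\alpha}}(\mathbf{0})=(-2\pi i)^{|\alpha|}\int_{\mathbb{R}^d}\mathbf{z}^{\alpha}\varphi(\mathbf{z})\,d\mathbf{z},$$
so it suffices to construct a $\varphi\in C^\infty_c(U)$ with $\int\varphi=1$ and $\int\mathbf{z}^{\alpha}\varphi(\mathbf{z})\,d\mathbf{z}=0$ for every multi-index $\alpha$ with $1\leq|\alpha|\leq M$.

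I would fix a smooth bump $\psi$ supported in a ball $B(\mathbf{0},r)\subset U$ with $\int\psi=1$ and, for $j=0,1,\ldots,M$, set $\psi_j(\mathbf{z})=2^{jd}\psi(2^j\mathbf{z})$. Each $\psi_j$ is supported in $B(\mathbf{0},2^{-j}r)\subset U$, has integral $1$, and, by change of variables,
$$\int_{\mathbb{R}^d}\mathbf{z}^{\alpha}\psi_j(\mathbf{z})\,d\mathbf{z}=2^{-j|\alpha|}m_{\alpha}, \qquad m_{\alpha}:=\int_{\mathbb{R}^d}\mathbf{z}^{\alpha}\psi(\mathbf{z})\,d\mathbf{z}.$$

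Then I look for $\varphi=\sum_{j=0}^M c_j\psi_j$. The moment conditions reduce to $\sum_{j=0}^M c_j=1$ together with $m_{\alpha}\sum_{j=0}^M c_j 2^{-j|\alpha|}=0$ for each $\alpha$ with $1\leq|\alpha|\leq M$. It is enough to impose the stronger, $\alpha$-free requirement $\sum_{j=0}^M c_j 2^{-jk}=0$ for $k=1,\ldots,M$, which kills all multi-indices of a given order $k$ simultaneously. Combined with the normalization $\sum_j c_j=1$, this is an $(M+1)\times(M+1)$ linear system whose coefficient matrix is a Vandermonde matrix in the distinct nodes $1,2^{-1},\ldots,2^{-M}$, and is therefore invertible; this yields a unique $(c_0,\ldots,c_M)$.

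There is essentially no obstacle. The only conceptual point to highlight is that the family $\{\psi_j\}$ depends on a single real dilation parameter, so each moment $m_{\alpha}$ is multiplied by a factor that depends only on $|\alpha|$; this is what allows one scalar Vandermonde system of size $M+1$ to annihilate all the mixed partials of $\widehat{\varphi}$ at the origin simultaneously, independently of the dimension $d$. The resulting $\varphi=\sum_{j=0}^M c_j\psi_j$ is smooth, supported in $U$, and by construction satisfies $\widehat{\varphi}(\mathbf{0})=1$ together with $\partial^{\alpha}\widehat{\varphi}(\mathbf{0})=0$ for every $\alpha$ with $1\leq|\alpha|\leq M$.
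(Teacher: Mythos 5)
Your construction is correct and is essentially identical to the paper's: both take dilates $2^{jd}\psi(2^{j}\mathbf{z})$ of a single bump function and solve the resulting $(M+1)\times(M+1)$ Vandermonde system in the nodes $1,2^{-1},\ldots,2^{-M}$; the only cosmetic difference is that you phrase the conditions as vanishing moments of $\varphi$ while the paper works directly with the derivatives of $\widehat{\psi}$ at the origin. No changes needed.
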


\begin{proof}
Let $\psi\left(  \mathbf{z}\right)  $ be a smooth function supported in $U$
such that%
\[
\int_{\mathbb{R}^{d}}\psi\left(  \mathbf{z}\right)  d\mathbf{z}=1.
\]
We want to find constants $c_{0},c_{1},\ldots,c_{M}$ such that the function
\[
\varphi\left(  \mathbf{z}\right)  =\sum_{k=0}^{M}2^{kd}c_{k}\psi\left(
2^{k}\mathbf{z}\right)
\]
satisfies the lemma. We have%
\[
\widehat{\varphi}\left(  \mathbf{\zeta}\right)  =\sum_{k=0}^{M}c_{k}%
\widehat{\psi}\left(  2^{-k}\mathbf{\zeta}\right)
\]
so that%
\[
\widehat{\varphi}\left(  0\right)  =\sum_{k=0}^{M}c_{k}\widehat{\psi}\left(
0\right)  =\sum_{k=0}^{M}c_{k}%
\]
and for every multi-index $\alpha$%
\[
\frac{\partial^{\left\vert \alpha\right\vert }\widehat{\varphi}}%
{\partial\mathbf{\zeta}^{\alpha}}\left(  0\right)  =\sum_{k=0}^{M}%
c_{k}2^{-k\left\vert \alpha\right\vert }\frac{\partial^{\left\vert
\alpha\right\vert }\widehat{\psi}}{\partial\mathbf{\zeta}^{\alpha}}\left(
0\right)  .
\]
Hence the coefficients $c_{k}$ are the solution of the non singular linear
system%
\[
\left\{
\begin{array}
[c]{l}%
c_{0}+c_{2}+\cdots+c_{M}=1\\
\left(  2^{-1}\right)  ^{0}c_{0}+\left(  2^{-1}\right)  ^{1}c_{1}%
+\cdots+\left(  2^{-1}\right)  ^{M}c_{M}=0\\
~~~~~~~\vdots\\
\left(  2^{-M}\right)  ^{0}c_{0}+\left(  2^{-M}\right)  ^{1}c_{1}%
+\cdots+\left(  2^{-M}\right)  ^{M}c_{M}=0
\end{array}
\right.
\]

\end{proof}

The following lemma collects the main estimates that we will use later.

\begin{lemma}
\label{Lemma Stime Discrep}Assume the inequalities%
\[
\left\vert \widehat{\chi}_{B}\left(  \mathbf{\zeta}\right)  \right\vert
\leqslant\left\{
\begin{array}
[c]{l}%
c\left\vert s\right\vert ^{-1-\frac{d-1}{\gamma}},\\
c\left\vert \mathbf{\xi}\right\vert ^{-\left(  d-1\right)  \frac{\gamma
-2}{2\left(  \gamma-1\right)  }}\left\vert s\right\vert ^{-\frac{d-1}{2\left(
\gamma-1\right)  }-1},\\
c\left\vert \mathbf{\xi}\right\vert ^{-\frac{d+1}{2}}.
\end{array}
\right.
\]
proved in Proposition \ref{Prop Stima chi}, where $\mathbf{\zeta}=\mathbf{\xi
}+s\Theta$, with $s=\mathbf{\zeta}\cdot\Theta$ and $\mathbf{\xi}\cdot
\Theta\mathbf{=}0$ for some $\Theta\in\mathbb{R}^{d}$ with $\left\vert
\Theta\right\vert =1$ and $\gamma>2$ and let $\varphi\left(  \mathbf{z}%
\right)  $ as in the previous lemma.

1) For every $\tau>0$ and $p>2d/\left(  d-1\right)  $ there exists $c$ such
that for every $\varepsilon>0$ and $R>1$,%
\[
\left(  \int_{\mathbb{T}^{d}}\left\vert R^{d}\sum_{\left\vert \mathbf{m-}%
\left(  \mathbf{m}\cdot\Theta\right)  \Theta\right\vert \geqslant\tau}%
\widehat{\varphi}\left(  \varepsilon\mathbf{m}\right)  \widehat{\chi}%
_{B}\left(  R\mathbf{m}\right)  e^{2\pi i\mathbf{m\cdot z}}\right\vert
^{p}d\mathbf{z}\right)  ^{1/p}\leqslant cR^{\frac{d-1}{2}}\varepsilon
^{-\frac{d-1}{2}+\frac{d}{p}}.
\]

2) For every $\tau>0$ there exists $c$ such that for every $\varepsilon>0$,
$R>1$ and $\mathbf{z}\in\mathbb{T}^{d}$,%
\[
\left\vert R^{d}\sum_{\mathbf{0}\neq\mathbf{m\in}\mathbb{Z}^{d},~~\left\vert
\mathbf{m-}\left(  \mathbf{m}\cdot\Theta\right)  \Theta\right\vert <\tau
}\widehat{\varphi}\left(  \varepsilon\mathbf{m}\right)  \widehat{\chi}%
_{B}\left(  R\mathbf{m}\right)  e^{2\pi i\mathbf{m\cdot z}}\right\vert
\leqslant cR^{\left(  d-1\right)  \left(  1-\frac{1}{\gamma}\right)  }.
\]

\end{lemma}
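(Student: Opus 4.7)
Throughout, write $\mathbf{m}=s_\mathbf{m}\Theta+\xi_\mathbf{m}$ with $s_\mathbf{m}=\mathbf{m}\cdot\Theta$ and $\xi_\mathbf{m}\in\Theta^\perp$, so that $|\xi_\mathbf{m}|=|\mathbf{m}-(\mathbf{m}\cdot\Theta)\Theta|$.

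For Part 1, since $p>2d/(d-1)\ge 2$, the plan is to apply the Hausdorff--Young inequality on $\mathbb{T}^d$,
\[
\Bigl(\int_{\mathbb{T}^d}|S_1|^p\,d\mathbf{z}\Bigr)^{1/p}\le R^d\Bigl(\sum_{|\xi_\mathbf{m}|\ge\tau}|\widehat{\varphi}(\varepsilon\mathbf{m})|^{p'}|\widehat{\chi}_B(R\mathbf{m})|^{p'}\Bigr)^{1/p'},
\]
and on the range $|\xi_\mathbf{m}|\ge\tau$ invoke the third estimate of Proposition \ref{Prop Stima chi}, $|\widehat{\chi}_B(R\mathbf{m})|\le cR^{-(d+1)/2}|\xi_\mathbf{m}|^{-(d+1)/2}$ (which is the smooth strictly-convex-type decay valid because $R\mathbf{m}$ is bounded away from the normal $\pm\Theta$ to the flat point). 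The rapid decay of $\widehat{\varphi}(\varepsilon\mathbf{m})$ effectively restricts the sum to $|\mathbf{m}|\lesssim 1/\varepsilon$. Then perform a dyadic decomposition over shells $|\xi_\mathbf{m}|\in[2^j\tau,2^{j+1}\tau]$ with $0\le j\lesssim\log_2(1/(\varepsilon\tau))$. The key combinatorial input is that the number of $\mathbf{m}\in\mathbb{Z}^d$ with $|\mathbf{m}|\lesssim 1/\varepsilon$ in the $j$-th shell is $\lesssim(2^j\tau)^{d-1}/\varepsilon$, since the region is a tube of cross-section $\sim(2^j\tau)^{d-1}$ around $\mathbb{R}\Theta$ and length $\sim 1/\varepsilon$. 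Plugging in and summing, the hypothesis $p>2d/(d-1)$ (equivalently $p'<2d/(d+1)$, so that $(d+1)p'/2<d$) is precisely what makes the dyadic sum geometric with the upper endpoint $2^j\tau\sim 1/\varepsilon$ dominant; taking the $1/p'$-th root produces the factor $\varepsilon^{-(d-1)/2+d/p}$.

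For Part 2, the plan is to pass to the $L^\infty$ bound via the triangle inequality. Since the constant is allowed to depend on $\tau$, it suffices to treat $\tau<1/\sqrt{2}$ (larger $\tau$ differs by finitely many extra terms absorbed into the constant). For such $\tau$, every $\mathbf{m}\in\mathbb{Z}^d\setminus\{\mathbf{0}\}$ with $|\xi_\mathbf{m}|<\tau$ satisfies $|\mathbf{m}\cdot\Theta|\ge\sqrt{1-\tau^2}>0$, and one applies the first estimate of Proposition \ref{Prop Stima chi}: $|\widehat{\chi}_B(R\mathbf{m})|\le c(R|\mathbf{m}\cdot\Theta|)^{-1-(d-1)/\gamma}$. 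This gives
\[
|S_2(\mathbf{z})|\le cR^{(d-1)(1-1/\gamma)}\sum_{\substack{\mathbf{m}\ne\mathbf{0}\\|\xi_\mathbf{m}|<\tau}}|\widehat{\varphi}(\varepsilon\mathbf{m})|\,|\mathbf{m}\cdot\Theta|^{-1-(d-1)/\gamma}.
\]
Decomposing dyadically over $|\mathbf{m}\cdot\Theta|\in[2^k,2^{k+1}]$ and counting the integer points of the tube $\{|\xi_\mathbf{m}|<\tau\}$ in each such shell by $\lesssim\tau^{d-1}\cdot 2^k+1$, the $k$-th shell contributes at most $(\tau^{d-1}\cdot 2^k+1)\cdot 2^{-k(1+(d-1)/\gamma)}$. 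Both of the resulting geometric series converge absolutely since $(d-1)/\gamma>0$ and $1+(d-1)/\gamma>1$, and the sum is uniform in $\varepsilon$, yielding the claimed pointwise bound.

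The principal obstacle lies in Part 1: the lattice-point count in slabs parallel to $\mathbb{R}\Theta$ must be $\lesssim(2^j\tau)^{d-1}/\varepsilon$ uniformly in $\Theta$. For generic irrational $\Theta$ this follows from a standard volume/Minkowski argument, whereas for rational $\Theta$ one must confirm that the integer points on $\mathbb{R}\Theta$ (which collapse the projection onto $\Theta^\perp$) do not spoil the estimate; a careful fiber count along $\mathbf{m}_0$ (the primitive vector in $\mathbb{R}\Theta\cap\mathbb{Z}^d$) shows they do not. The sharpness of the $\varepsilon$-exponent then comes down to verifying that the dyadic series in $j$ is dominated by its largest-$j$ end, which is exactly what $p>2d/(d-1)$ arranges. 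Part 2 is technically simpler, the only care needed being the reduction to small $\tau$ so that no $\mathbf{m}\in\Theta^\perp\cap\mathbb{Z}^d\setminus\{\mathbf{0}\}$ (for which the first estimate is inapplicable) enters the sum.
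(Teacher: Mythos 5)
Your Part 2 is correct and is essentially the paper's own argument: the first estimate of Proposition \ref{Prop Stima chi} reduces the sum to an essentially one--dimensional convergent series along the tube, and your reduction to small $\tau$ is a harmless extra precaution. Part 1, however, has a genuine gap: you cannot treat the whole region $|\xi_{\mathbf m}|\geqslant\tau$ with the single bound $|\widehat{\chi}_B(R\mathbf m)|\leqslant c(R|\xi_{\mathbf m}|)^{-(d+1)/2}$. Writing $p'$ for the conjugate exponent, with the shell count $\lesssim(2^j\tau)^{d-1}\varepsilon^{-1}$ that you correctly identify, the $j$-th shell contributes $\varepsilon^{-1}(2^j\tau)^{\,d-1-(d+1)p'/2}$ to the $p'$-th power sum, and for this geometric series to be dominated by its top term $2^j\tau\sim1/\varepsilon$ you need $(d+1)p'/2<d-1$, not $(d+1)p'/2<d$: there is an off-by-one in your exponent, because the tube cross-section yields $(2^j\tau)^{d-1}$ lattice points, not $(2^j\tau)^{d}$. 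The condition $(d+1)p'/2<d-1$ amounts to $p>2(d-1)/(d-3)$, strictly stronger than the hypothesis $p>2d/(d-1)$, and it can never hold for $d=2,3$. Concretely, the shell $j=0$ alone (the tube $\tau\leqslant|\xi_{\mathbf m}|\leqslant2\tau$, $|\mathbf m\cdot\Theta|\lesssim1/\varepsilon$) contains about $\varepsilon^{-1}$ lattice points on which your bound gives only $c_\tau R^{-(d+1)/2}$ with \emph{no} decay in $\mathbf m\cdot\Theta$, so after Hausdorff--Young its contribution is $R^{(d-1)/2}\varepsilon^{-1/p'}$, which exceeds the target $R^{(d-1)/2}\varepsilon^{-(d-1)/2+d/p}$ whenever $p'>2(d-1)/(d+1)$.

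The missing ingredient is the second (intermediate) estimate of Proposition \ref{Prop Stima chi}. The paper splits $\{|\mathbf m_1|\geqslant\tau\}$ into $\{\tau\leqslant|\mathbf m_1|\leqslant|\mathbf m_2|\}$ and $\{|\mathbf m_1|>\max(|\mathbf m_2|,\tau)\}$. On the second region your $|\xi|^{-(d+1)/2}$ bound does work, since there $|\xi_{\mathbf m}|\gtrsim|\mathbf m|$ and the sum is genuinely $d$-dimensional in $|\mathbf m|$. On the first region one must instead use $c\,|\xi|^{-(d-1)(\gamma-2)/(2(\gamma-1))}|s|^{-(d-1)/(2(\gamma-1))-1}$: its decay in $s$ (with exponent below $-1$) is exactly what compensates for the $\varepsilon^{-1}$ lattice points in the long direction, and comparison with the corresponding integral then produces $\varepsilon^{(d+1)p'/2-d}$ under the stated hypothesis $p'<2d/(d+1)$. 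Without importing that anisotropic estimate, your plan for Part 1 does not close.
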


\begin{proof}
Let us prove 1). For every $\mathbf{m}\in\mathbb{Z}^{d}$, write $\mathbf{m}%
=\mathbf{m}_{1}+\mathbf{m}_{2}$ with $\mathbf{m}_{1}=\mathbf{m-}\left(
\mathbf{m}\cdot\Theta\right)  \Theta$ and $\mathbf{m}_{2}=\left(
\mathbf{m}\cdot\Theta\right)  \Theta$. Also observe that for every $M>0$,
\[
\left\vert \widehat{\varphi}\left(  \mathbf{\zeta}\right)  \right\vert
\leqslant c_{M}\left(  1+\left\vert \mathbf{\zeta}\right\vert \right)  ^{-M}.
\]
Since $p\geqslant2$, by the Hausdorff-Young inequality with $1/p+1/q=1$ and
the assumption on $\widehat{\chi}_{B}\left(  \mathbf{\zeta}\right)  $ we have%
\begin{align*}
&  \left(  \int_{\mathbb{T}^{d}}\left\vert R^{d}\sum_{\left\vert
\mathbf{m}_{1}\right\vert \geqslant\tau}\widehat{\varphi}\left(
\varepsilon\mathbf{m}\right)  \widehat{\chi}_{B}\left(  R\mathbf{m}\right)
e^{2\pi i\mathbf{m\cdot z}}\right\vert ^{p}d\mathbf{z}\right)  ^{q/p}\\
&  \leqslant R^{dq}\sum_{\left\vert \mathbf{m}_{1}\right\vert \geqslant\tau
}\left\vert \widehat{\varphi}\left(  \varepsilon\mathbf{m}\right)  \right\vert
^{q}\left\vert \widehat{\chi}_{B}\left(  R\mathbf{m}\right)  \right\vert
^{q}\\
&  \leqslant cR^{q\frac{d-1}{2}}\sum_{\tau\leqslant\left\vert \mathbf{m}%
_{1}\right\vert \leqslant\left\vert \mathbf{m}_{2}\right\vert }\left(
1+\varepsilon\left\vert \mathbf{m}_{2}\right\vert \right)  ^{-M}\left\vert
\mathbf{m}_{1}\right\vert ^{-q\left(  d-1\right)  \frac{\gamma-2}{2\left(
\gamma-1\right)  }}\left\vert \mathbf{m}_{2}\right\vert ^{-q\frac
{d-1}{2\left(  \gamma-1\right)  }-q}\\
&  +cR^{q\frac{d-1}{2}}\sum_{\left\vert \mathbf{m}_{1}\right\vert >\max\left(
\left\vert \mathbf{m}_{2}\right\vert ,\tau\right)  }\left(  1+\varepsilon
\left\vert \mathbf{m}_{1}\right\vert \right)  ^{-M}\left\vert \mathbf{m}%
_{1}\right\vert ^{-\frac{d+1}{2}q}\\
&  =A+B.
\end{align*}

Since in the series in $A$ the quantities $\left\vert \mathbf{m}%
_{1}\right\vert $ and $\left\vert \mathbf{m}_{2}\right\vert $ are bounded away
from zero we can control the series with an integral,%
\begin{align*}
&  \sum_{\tau<\left\vert \mathbf{m}_{1}\right\vert \leqslant\left\vert
\mathbf{m}_{2}\right\vert }\left(  1+\varepsilon\left\vert \mathbf{m}%
_{2}\right\vert \right)  ^{-M}\left\vert \mathbf{m}_{1}\right\vert ^{-q\left(
d-1\right)  \frac{\gamma-2}{2\left(  \gamma-1\right)  }}\left\vert
\mathbf{m}_{2}\right\vert ^{-q\frac{d-1}{2\left(  \gamma-1\right)  }-q}\\
&  \leqslant c\iint\nolimits_{\left\{  \tau<\left\vert \xi\right\vert
\leqslant\left\vert s\right\vert \right\}  }\left(  1+\varepsilon\left\vert
s\right\vert \right)  ^{-M}\left\vert \xi\right\vert ^{-q\left(  d-1\right)
\frac{\gamma-2}{2\left(  \gamma-1\right)  }}\left\vert s\right\vert
^{-q\frac{d-1}{2\left(  \gamma-1\right)  }-q}d\xi ds\\
&  \leqslant c\int_{\tau}^{+\infty}\left(  1+\varepsilon\left\vert
s\right\vert \right)  ^{-M}\left\vert s\right\vert ^{-q\frac{d-1}{2\left(
\gamma-1\right)  }-q}\left[  \int_{\left\{  \left\vert \xi\right\vert
\leqslant\left\vert s\right\vert \right\}  }\left\vert \xi\right\vert
^{-q\left(  d-1\right)  \frac{\gamma-2}{2\left(  \gamma-1\right)  }}%
d\xi\right]  ds\\
&  \leqslant c\int_{\tau}^{+\infty}\left(  1+\varepsilon\left\vert
s\right\vert \right)  ^{-M}\left\vert s\right\vert ^{d-1-q\frac{d+1}{2}%
}ds\leqslant c\varepsilon^{-\left(  d-q\frac{d+1}{2}\right)  }%
\end{align*}
(note that since $p>2d/\left(  d-1\right)  $ we have $q<2d/\left(  d+1\right)
$). Similarly, for the series in $B$,%
\begin{align*}
&  \sum_{\left\vert \mathbf{m}_{1}\right\vert >\max\left(  \left\vert
\mathbf{m}_{2}\right\vert ,\tau\right)  }\left(  1+\varepsilon\left\vert
\mathbf{m}_{1}\right\vert \right)  ^{-M}\left\vert \mathbf{m}_{1}\right\vert
^{-\frac{d+1}{2}q}\\
&  \leqslant c%
{\displaystyle\iint\nolimits_{\left\{  \left\vert \xi\right\vert >\left\vert
s\right\vert \right\}  }}
\left(  1+\varepsilon\left\vert \xi\right\vert \right)  ^{-M}\left\vert
\xi\right\vert ^{-\frac{d+1}{2}q}d\xi ds\\
&  =c\varepsilon^{\frac{d+1}{2}q-d}\int_{\mathbb{R}^{d-1}}\left(  1+\left\vert
\xi\right\vert \right)  ^{-M}\left\vert \xi\right\vert ^{1-\frac{d+1}{2}q}%
d\xi=c\varepsilon^{\frac{d+1}{2}q-d}.
\end{align*}
This proves point 1) in the statement. Similarly, to prove point 2) observe
that, by the assumption on $\widehat{\chi}_{B}\left(  \mathbf{\zeta}\right)
$, we have%
\begin{align*}
\left\vert R^{d}\sum_{\mathbf{0}\neq\mathbf{m\in}\mathbb{Z}^{d},\left\vert
\mathbf{m}_{1}\right\vert <\tau}\widehat{\varphi}\left(  \varepsilon
\mathbf{m}\right)  \widehat{\chi}_{B}\left(  R\mathbf{m}\right)  e^{2\pi
i\mathbf{m\cdot z}}\right\vert  &  \leqslant R^{d}\sum_{\mathbf{0}%
\neq\mathbf{m\in}\mathbb{Z}^{d},\left\vert \mathbf{m}_{1}\right\vert <\tau
}\left\vert \widehat{\chi}_{B}\left(  R\mathbf{m}\right)  \right\vert \\
&  \leqslant cR^{\left(  d-1\right)  \left(  1-\frac{1}{\gamma}\right)  }%
\sum_{\mathbf{0}\neq\mathbf{m\in}\mathbb{Z}^{d},\left\vert \mathbf{m}%
_{1}\right\vert <\tau}\left\vert \mathbf{m}_{2}\right\vert ^{-1-\frac
{d-1}{\gamma}}.
\end{align*}
Note that the last series is essentially one dimensional and it is convergent.
\end{proof}

\bigskip

\begin{proof}
[Proof of Theorem \ref{Theorem A}]The discrepancy will be estimated using the
size of $\widehat{\chi}_{B}\left(  \mathbf{\zeta}\right)  $. Since the main
contribution to the size of this Fourier transform comes from the flat points
on $\partial B$ and since with a suitable partition of unity we can isolate
such flat points, without loss of generality we can assume the existence of
only a single flat point of order $\gamma$.

The case $1<\gamma\leqslant2$ follows from the argument used in \cite{BCGT}
for the smooth case. This essentially reduces to the Hausdorff-Young
inequality and follows from the estimate%
\[
\left\vert \widehat{\chi}_{B}\left(  \mathbf{\zeta}\right)  \right\vert
\leqslant c\left\vert \mathbf{\zeta}\right\vert ^{-\frac{d+1}{2}}%
\]
that holds true also in our case by (\ref{Stima Chi ottimale}). Let us now
prove point 2) and point 3) in the theorem. To prove point 2)\ we observe that
the case $p\leqslant2d/\left(  d+1-\gamma\right)  $ follows from the case
$p=2d/\left(  d+1-\gamma\right)  $, and the case $2d/\left(  d+1-\gamma
\right)  \leqslant p\leqslant+\infty$ follows by interpolation between
$p=2d/\left(  d+1-\gamma\right)  $ and $p=+\infty$. Hence to prove point 2) it
suffices to consider only the cases $p=2d/\left(  d+1-\gamma\right)  $ and
$p=+\infty$. Similarly to prove point 3) it suffices to consider only the case
$p=+\infty$. Observe that since $\gamma>2$, all these values of $p$ are
greater than $2d/\left(  d-1\right)  $.

By Lemma \ref{Lemma Mollif} we have%
\begin{align*}
\left\Vert D_{R}\right\Vert _{L^{p}\left(  \mathbb{T}^{d}\right)  }  &
\leqslant\left\vert B\right\vert \max_{\pm}\left\vert \left(  R\pm
\varepsilon\right)  ^{d}-R^{d}\right\vert +\max_{\pm}\left\Vert D_{\varepsilon
,R\pm\varepsilon}\right\Vert _{L^{p}\left(  \mathbb{T}^{d}\right)  }\\
&  \leqslant cR^{d-1}\varepsilon+\max_{\pm}\left\Vert D_{\varepsilon
,R\pm\varepsilon}\right\Vert _{L^{p}\left(  \mathbb{T}^{d}\right)  }.
\end{align*}

Replacing $R\pm\varepsilon$ with $R$ for simplicity, Lemma
\ref{Lemma Stime Discrep}, with a fixed $\tau>0$, gives%
\begin{align*}
\left\Vert D_{\varepsilon,R}\right\Vert _{L^{p}\left(  \mathbb{T}^{d}\right)
}  &  =\left(  \int_{\mathbb{T}^{d}}\left\vert R^{d}\sum_{\mathbf{0}%
\neq\mathbf{m}\in\mathbb{Z}^{d}}\widehat{\varphi}\left(  \varepsilon
\mathbf{m}\right)  \widehat{\chi}_{B}\left(  R\mathbf{m}\right)  e^{2\pi
i\mathbf{m\cdot z}}\right\vert ^{p}d\mathbf{z}\right)  ^{1/p}\\
&  \leqslant\left(  \int_{\mathbb{T}^{d}}\left\vert R^{d}\sum_{\left\vert
\mathbf{m-}\left(  \mathbf{m}\cdot\Theta\right)  \Theta\right\vert
\geqslant\tau}\widehat{\varphi}\left(  \varepsilon\mathbf{m}\right)
\widehat{\chi}_{B}\left(  R\mathbf{m}\right)  e^{2\pi i\mathbf{m\cdot z}%
}\right\vert ^{p}d\mathbf{z}\right)  ^{1/p}\\
&  +\left\vert R^{d}\sum_{\mathbf{0}\neq\mathbf{m\in}\mathbb{Z}^{d}%
,~~\left\vert \mathbf{m-}\left(  \mathbf{m}\cdot\Theta\right)  \Theta
\right\vert <\tau}\widehat{\varphi}\left(  \varepsilon\mathbf{m}\right)
\widehat{\chi}_{B}\left(  R\mathbf{m}\right)  e^{2\pi i\mathbf{m\cdot z}%
}\right\vert \\
&  \leqslant cR^{\frac{d-1}{2}}\varepsilon^{-\frac{d-1}{2}+\frac{d}{p}%
}+cR^{\left(  d-1\right)  \left(  1-\frac{1}{\gamma}\right)  }%
\end{align*}

The choice $\varepsilon=R^{-\frac{d-1}{d+1-2d/p}}$ then gives%
\[
\left\Vert D_{R}\right\Vert _{L^{p}\left(  \mathbb{T}^{d}\right)  }\leqslant
cR^{d\left(  d-1\right)  \frac{p-2}{p-2d+dp}}+cR^{\left(  d-1\right)  \left(
1-\frac{1}{\gamma}\right)  }.
\]
For $p=2d/\left(  d+1-\gamma\right)  $ and $2<\gamma\leqslant d+1$, or
$p=+\infty$ and $\gamma>d+1$ we obtain%
\[
\left\Vert D_{R}\right\Vert _{L^{p}\left(  \mathbb{T}^{d}\right)  }\leqslant
cR^{\left(  d-1\right)  \left(  1-\frac{1}{\gamma}\right)  }.
\]
For $2<\gamma<d+1$ and $p=+\infty$ we obtain%
\[
\left\Vert D_{R}\right\Vert _{L^{\infty}\left(  \mathbb{T}^{d}\right)
}\leqslant cR^{\frac{d\left(  d-1\right)  }{d+1}}.
\]

\end{proof}

\bigskip

\begin{proof}
[Proof of Theorem \ref{Theorem B}]The idea of the proof is simple. For every
$\mathbf{m}\in\mathbb{Z}^{d}$, write $\mathbf{m}=\mathbf{m}_{1}+\mathbf{m}%
_{2}$ with $\mathbf{m}_{1}=\mathbf{m-}\left(  \mathbf{m}\cdot\Theta\right)
\Theta$ and $\mathbf{m}_{2}=\left(  \mathbf{m}\cdot\Theta\right)  \Theta$, and
split the Fourier expansion of the discrepancy as%
\begin{equation}
D_{R}\left(  \mathbf{z}\right)  =R^{d}\sum_{\mathbf{m}_{1}=\mathbf{0,}\text{
}\mathbf{m}_{2}\neq\mathbf{0}}\widehat{\chi}_{B}\left(  R\mathbf{m}\right)
e^{2\pi i\mathbf{m}\cdot\mathbf{z}}+R^{d}\sum_{\mathbf{m}_{1}\neq\mathbf{0}%
}\widehat{\chi}_{B}\left(  R\mathbf{m}\right)  e^{2\pi i\mathbf{m}%
\cdot\mathbf{z}}. \label{spezzamento discrepanza}%
\end{equation}
We will see that the main term is the first one and it follows from
Proposition \ref{Prop Sezioni} that%
\begin{align*}
&  R^{d}\sum_{\mathbf{m}_{1}=\mathbf{0,}\text{ }\mathbf{m}_{2}\neq\mathbf{0}%
}\widehat{\chi}_{B}\left(  R\mathbf{m}\right)  e^{2\pi i\mathbf{m}%
\cdot\mathbf{z}}\\
&  \sim R^{\left(  d-1\right)  \left(  1-1/\gamma_{P}\right)  }A_{P}\left(
\mathbf{z-}RP\right)  +R^{\left(  d-1\right)  \left(  1-1/\gamma_{Q}\right)
}A_{Q}\left(  \mathbf{z-}RQ\right)  .
\end{align*}
The details are as follows. Let $D_{\varepsilon,R}\left(  \mathbf{z}\right)  $
be the mollified discrepancy as in the proof of Theorem \ref{Theorem A} and
let%
\[
Y\left(  \mathbf{z,}R\right)  =R^{\left(  d-1\right)  \left(  1-1/\gamma
_{P}\right)  }A_{P}\left(  \mathbf{z-}RP\right)  +R^{\left(  d-1\right)
\left(  1-1/\gamma_{Q}\right)  }A_{Q}\left(  \mathbf{z-}RQ\right)  .
\]
From Lemma \ref{Lemma Mollif} with a cut-off function as in Lemma
\ref{Vandermonde} we have%
\begin{align}
&  \left\vert D_{R}\left(  \mathbf{z}\right)  -Y\left(  \mathbf{z,}R\right)
\right\vert \nonumber\\
&  \leqslant\left\vert B\right\vert \max_{\pm}\left\vert \left(
R\pm\varepsilon\right)  ^{d}-R^{d}\right\vert +\max_{\pm}\left\vert
D_{\varepsilon,R\pm\varepsilon}\left(  \mathbf{z}\right)  -Y\left(
\mathbf{z,}R\pm\varepsilon\right)  \right\vert \label{NormaErrore}\\
&  +\left\vert Y\left(  \mathbf{z,}R\pm\varepsilon\right)  -Y\left(
\mathbf{z,}R\right)  \right\vert \nonumber
\end{align}
The first term in the right-hand side is bounded by $cR^{d-1}\varepsilon$. For
the third term we have%
\begin{align*}
&  \left\vert Y\left(  \mathbf{z,}R\pm\varepsilon\right)  -Y\left(
\mathbf{z,}R\right)  \right\vert \\
&  \leqslant\left\vert \left(  R\pm\varepsilon\right)  ^{\left(  d-1\right)
\left(  1-1/\gamma_{P}\right)  }A_{P}\left(  \mathbf{z}-\left(  R\pm
\varepsilon\right)  P\right)  -R^{\left(  d-1\right)  \left(  1-1/\gamma
_{P}\right)  }A_{P}\left(  \mathbf{z}-RP\right)  \right\vert \\
&  +\left\vert \left(  R\pm\varepsilon\right)  ^{\left(  d-1\right)  \left(
1-1/\gamma_{Q}\right)  }A_{Q}\left(  \mathbf{z}-\left(  R\pm\varepsilon
\right)  Q\right)  -R^{\left(  d-1\right)  \left(  1-1/\gamma_{Q}\right)
}A_{Q}\left(  \mathbf{z}-RQ\right)  \right\vert .
\end{align*}
The two terms are similar, let us consider only the first one. Then%
\begin{align*}
&  \left\vert \left(  R\pm\varepsilon\right)  ^{\left(  d-1\right)  \left(
1-1/\gamma_{P}\right)  }A_{P}\left(  \mathbf{z}-\left(  R\pm\varepsilon
\right)  P\right)  -R^{\left(  d-1\right)  \left(  1-1/\gamma_{P}\right)
}A_{P}\left(  \mathbf{z}-RP\right)  \right\vert \\
&  \leqslant\left(  R\pm\varepsilon\right)  ^{\left(  d-1\right)  \left(
1-1/\gamma_{P}\right)  }\left\vert A_{P}\left(  \mathbf{z}-\left(
R\pm\varepsilon\right)  P\right)  -A_{P}\left(  \mathbf{z}-RP\right)
\right\vert \\
&  +\left\vert \left(  R\pm\varepsilon\right)  ^{\left(  d-1\right)  \left(
1-1/\gamma_{P}\right)  }-R^{\left(  d-1\right)  \left(  1-1/\gamma_{P}\right)
}\right\vert \left\vert A_{P}\left(  \mathbf{z}-RP\right)  \right\vert
\end{align*}
Since%
\begin{align*}
&  \left\vert A_{P}\left(  \mathbf{z}-\left(  R\pm\varepsilon\right)
P\right)  -A_{P}\left(  \mathbf{z}-RP\right)  \right\vert \\
&  \leqslant c\sum_{k=1}^{+\infty}k^{-1-\frac{d-1}{\gamma_{P}}}\\
&  ~\times\left\vert \sin\left(  2\pi k\mathbf{m}_{0}\cdot\left(
\mathbf{z}-\left(  R\pm\varepsilon\right)  P\right)  -\frac{\pi}{2}\frac
{d-1}{\gamma_{P}}\right)  -\sin\left(  2\pi k\mathbf{m}_{0}\cdot\left(
\mathbf{z}-RP\right)  -\frac{\pi}{2}\frac{d-1}{\gamma_{P}}\right)  \right\vert
\\
&  \leqslant c\sum_{k=1}^{+\infty}k^{-1-\frac{d-1}{\gamma_{P}}}\left\vert
\sin\left(  \varepsilon\pi k\mathbf{m}_{0}\cdot P\right)  \right\vert \\
&  \leqslant c\varepsilon^{\frac{d-1}{\gamma_{P}}}.
\end{align*}
and $\left\vert A_{P}\left(  \mathbf{z}-RP\right)  \right\vert \leqslant c$ we
have%
\begin{align*}
&  \left\vert \left(  R\pm\varepsilon\right)  ^{\left(  d-1\right)  \left(
1-1/\gamma_{P}\right)  }A_{P}\left(  \mathbf{z}-\left(  R\pm\varepsilon
\right)  P\right)  -R^{\left(  d-1\right)  \left(  1-1/\gamma_{P}\right)
}A_{P}\left(  \mathbf{z}-RP\right)  \right\vert \\
&  \leqslant cR^{\left(  d-1\right)  \left(  1-1/\gamma_{P}\right)
}\varepsilon^{\frac{d-1}{\gamma_{P}}}+R^{\left(  d-1\right)  \left(
1-1/\gamma_{P}\right)  -1}\varepsilon.
\end{align*}
It remains to estimate the second term in (\ref{NormaErrore}) and for
simplicity we replace $R\pm\varepsilon$ with $R$. We have%
\begin{align}
&  D_{\varepsilon,R}\left(  \mathbf{z}\right)  -Y\left(  \mathbf{z},R\right)
\nonumber\\
&  =\left(  R^{d}\sum_{\mathbf{m}_{1}=\mathbf{0,m}_{2}\neq0}\widehat{\varphi
}\left(  \varepsilon\mathbf{m}\right)  \widehat{\chi}_{B}\left(
R\mathbf{m}\right)  e^{2\pi i\mathbf{m}\cdot\mathbf{z}}-Y\left(
\mathbf{z},R\right)  \right) \label{ErroreDaStimare}\\
&  +\left(  R^{d}\sum_{\mathbf{m}_{1}\neq\mathbf{0}}\widehat{\varphi}\left(
\varepsilon\mathbf{m}\right)  \widehat{\chi}_{B}\left(  R\mathbf{m}\right)
e^{2\pi i\mathbf{m}\cdot\mathbf{z}}\right) \nonumber\\
&  =I\left(  \mathbf{z}\right)  +II\left(  \mathbf{z}\right)  .\nonumber
\end{align}
For $I\left(  \mathbf{z}\right)  $ we have a pointwise estimate,%
\begin{align*}
\left\vert I\left(  \mathbf{z}\right)  \right\vert  &  \leqslant R^{d}%
\sum_{s\mathbf{\neq}0}\left\vert \widehat{\varphi}\left(  \varepsilon
s\mathbf{m}_{0}\right)  -1\right\vert \left\vert \widehat{\chi}_{B}\left(
Rs\mathbf{m}_{0}\right)  \right\vert \\
&  +\left\vert R^{d}\sum_{s\neq0}\widehat{\chi}_{B}\left(  Rs\mathbf{m}%
_{0}\right)  e^{2\pi is\mathbf{m}_{0}\cdot\mathbf{z}}-Y\left(  \mathbf{z}%
,R\right)  \right\vert .
\end{align*}
Our choice of the function $\varphi\left(  \mathbf{z}\right)  $ yields%
\[
\frac{d^{\left\vert \alpha\right\vert }\widehat{\varphi}}{d\zeta^{\alpha}%
}\left(  \mathbf{0}\right)  =0
\]
for every multi-index $\alpha$ with $\left\vert \alpha\right\vert \leqslant
M$. Hence
\[
\left\vert \widehat{\varphi}\left(  \mathbf{\zeta}\right)  -1\right\vert
\leqslant c_{M}\left\vert \mathbf{\zeta}\right\vert ^{M},
\]
and by Proposition \ref{Prop Stima chi} (recall that $\gamma_{P}%
\geqslant\gamma_{Q}$)%
\begin{align*}
&  R^{d}\sum_{\mathbf{s\neq}0}\left\vert \widehat{\varphi}\left(  \varepsilon
s\mathbf{m}_{0}\right)  -1\right\vert \left\vert \widehat{\chi}_{B}\left(
Rs\mathbf{m}_{0}\right)  \right\vert \\
&  \leqslant cR^{\left(  d-1\right)  \left(  1-\frac{1}{\gamma_{P}}\right)
}\sum_{s\neq0}\min\left(  \varepsilon^{M}\left\vert s\right\vert
^{M},1\right)  \left\vert s\right\vert ^{-1-\frac{d-1}{\gamma_{P}}}\\
&  \leqslant cR^{\left(  d-1\right)  \left(  1-\frac{1}{\gamma_{P}}\right)
}\sum_{\varepsilon\left\vert s\right\vert \leqslant1}\varepsilon^{M}\left\vert
s\right\vert ^{M}\left\vert s\right\vert ^{-1-\frac{d-1}{\gamma_{P}}%
}+cR^{\left(  d-1\right)  \left(  1-\frac{1}{\gamma_{P}}\right)  }%
\sum_{\varepsilon\left\vert s\right\vert >1}\left\vert s\right\vert
^{-1-\frac{d-1}{\gamma_{P}}}\\
&  \leqslant cR^{\left(  d-1\right)  \left(  1-\frac{1}{\gamma_{P}}\right)
}\varepsilon^{\frac{d-1}{\gamma_{P}}}.
\end{align*}
By our assumption on the direction $\Theta$ and by Proposition
\ref{Prop Sezioni} a long but direct computation gives%
\begin{align*}
&  R^{d}\sum_{\mathbf{m}_{1}=\mathbf{0,m}\neq\mathbf{0}}\widehat{\chi}%
_{B}\left(  R\mathbf{m}\right)  e^{2\pi i\mathbf{m}\cdot\mathbf{z}}=R^{d}%
\sum_{s\neq0}\widehat{\chi}_{B}\left(  Rs\mathbf{m}_{0}\right)  e^{2\pi
is\mathbf{m}_{0}\cdot\mathbf{z}}\\
&  =R^{\left(  d-1\right)  \left(  1-1/\gamma_{P}\right)  }A_{P}\left(
\mathbf{z-}RP\right)  +R^{\left(  d-1\right)  \left(  1-1/\gamma_{Q}\right)
}A_{Q}\left(  \mathbf{z-}RQ\right) \\
&  +O\left(  R^{d-1-\frac{d}{\gamma_{P}}}\right) \\
&  =Y\left(  \mathbf{z},R\right)  +O\left(  R^{d-1-\frac{d}{\gamma_{P}}%
}\right)
\end{align*}
Hence, we have the pointwise estimate%
\begin{equation}
\left\vert I\left(  \mathbf{z}\right)  \right\vert \leqslant cR^{\left(
d-1\right)  \left(  1-\frac{1}{\gamma_{P}}\right)  }\varepsilon^{\frac
{d-1}{\gamma_{P}}}+R^{d-1-\frac{d}{\gamma_{P}}}. \label{Norma_I}%
\end{equation}

The assumption that $\alpha\Theta\in\mathbb{Z}^{d}$ for some $\alpha$ implies
that the requirement $\mathbf{m}_{1}\neq\mathbf{0}$ is equivalent to
$\left\vert \mathbf{m}_{1}\right\vert \geqslant\tau$ for some $\tau>0$. By
Lemma \ref{Lemma Stime Discrep} we therefore have%
\begin{equation}
\left\Vert II\right\Vert _{p}\leqslant cR^{\frac{d-1}{2}}\varepsilon
^{-\frac{d-1}{2}+\frac{d}{p}}. \label{Norma_II}%
\end{equation}
Collecting the estimates (\ref{ErroreDaStimare}), (\ref{Norma_I}) and
(\ref{Norma_II}) we have%
\begin{align*}
&  \left\Vert D_{R}\left(  \mathbf{z}\right)  -Y\left(  \mathbf{z},R\right)
\right\Vert _{L^{p}\left(  \mathbb{T}^{d}\right)  }\\
&  \leqslant cR^{\left(  d-1\right)  \left(  1-1/\gamma_{P}\right)
}\varepsilon^{\frac{d-1}{\gamma_{P}}}+cR^{d-1}\varepsilon+cR^{d-1-\frac
{d}{\gamma_{P}}}+cR^{\frac{d-1}{2}}\varepsilon^{-\frac{d-1}{2}+d/p}.
\end{align*}
The choice $\varepsilon=R^{-\frac{d-1}{d+1-2d/p}}$ gives%
\begin{align*}
&  \left\Vert D_{R}\left(  \mathbf{z}\right)  -Y\left(  \mathbf{z},R\right)
\right\Vert _{L^{p}\left(  \mathbb{T}^{d}\right)  }\\
&  \leqslant cR^{\left(  d-1\right)  \left(  1-\frac{1}{\gamma_{P}}\right)
}\left(  R^{-\frac{d-1}{d+1-2d/p}\frac{d-1}{\gamma_{P}}}+R^{\frac{d-1}%
{\gamma_{P}}-\frac{d-1}{d+1-2d/p}}+R^{-\frac{1}{\gamma_{P}}}\right)  .
\end{align*}
Since our assumption implies $1/p>\left(  d+1-\gamma_{P}\right)  /\left(
2d\right)  $, all the exponents of $R$ in the parenthesis are negative and
therefore
\[
\left\Vert D_{R}\left(  \mathbf{z}\right)  -Y\left(  \mathbf{z},R\right)
\right\Vert _{L^{p}\left(  \mathbb{T}^{d}\right)  }\leqslant cR^{\left(
d-1\right)  \left(  1-\frac{1}{\gamma_{P}}\right)  -\delta}%
\]
for some $\delta>0$. This proves immediately point 2). It also prove point
1)\ as long as one notices that if $\gamma_{P}>\gamma_{Q}$%
\[
\left\Vert R^{\left(  d-1\right)  \left(  1-1/\gamma_{Q}\right)  }A_{Q}\left(
\mathbf{z-}RQ\right)  \right\Vert _{L^{p}\left(  \mathbb{T}^{d}\right)
}\leqslant cR^{\left(  d-1\right)  \left(  1-\frac{1}{\gamma_{P}}\right)
-\delta}%
\]
for a suitable $\delta>0$.
\end{proof}

\bigskip

\begin{proof}
[Proof of Corollary \ref{Corollary}]Because of our assumptions the constants
in front of the two series that define $A_{P}\left(  \mathbf{z-}RP\right)  $
and $A_{Q}\left(  \mathbf{z}-RQ\right)  $ are the same. A simple computation
gives%
\begin{align*}
&  A_{P}\left(  \mathbf{z-}RP\right)  +A_{Q}\left(  \mathbf{z}-RQ\right) \\
&  =\frac{4G_{P}\left(  0\right)  \Gamma\left(  \frac{d-1}{\gamma}+1\right)
}{\left(  2\pi\left\vert \mathbf{m}_{0}\right\vert \right)  ^{\frac
{d-1}{\gamma}+1}}\sum_{k=1}^{+\infty}k^{-1-\frac{d-1}{\gamma}}\sin\left(
\pi\left(  k\mathbf{m}_{0}\cdot R\left(  Q-P\right)  -\frac{d-1}{2\gamma
}\right)  \right) \\
&  \times\cos\left(  2\pi k\mathbf{m}_{0}\cdot\left(  \mathbf{z-}R\frac
{P+Q}{2}\right)  \right)  .
\end{align*}
and%
\[
A_{P}\left(  \mathbf{z-}RP\right)  +A_{Q}\left(  \mathbf{z}-RQ\right)  =0
\]
since $\mathbf{m}_{0}\cdot R\left(  Q-P\right)  $ and $\frac{d-1}{2\gamma}$
are integers.
\end{proof}

\bigskip

\begin{proof}
[Proof of Theorem \ref{Theorem C}]By Theorem 1.1 in \cite{BHI} we have the
following estimate for the $L^{2}$ average decay of the Fourier transform%
\[
\left(  \int_{SO\left(  d\right)  }\left\vert \widehat{\chi}_{B}\left(
R\sigma\mathbf{m}\right)  \right\vert ^{2}d\sigma\right)  ^{1/2}\leqslant
c\left(  R\left\vert \mathbf{m}\right\vert \right)  ^{-\frac{d+1}{2}}.
\]
Hence, applying the Hausdorff-Young inequality to (\ref{Fourier discrep}) with
$2\leqslant p<2d/\left(  d-1\right)  $ and $1/p+1/q=1$, we obtain%
\begin{align*}
\int_{SO\left(  d\right)  }\left(  \int_{\mathbb{T}^{d}}\left\vert
D_{R,\sigma}\left(  \mathbf{z}\right)  \right\vert ^{p}d\mathbf{z}\right)
^{q/p}d\sigma &  \leqslant R^{dq}\sum_{\mathbf{0}\neq\mathbf{m}\in
\mathbb{Z}^{d}}\int_{SO\left(  d\right)  }\left\vert \widehat{\chi}_{B}\left(
R\sigma\mathbf{m}\right)  \right\vert ^{q}d\sigma\\
&  \leqslant R^{dq}\sum_{\mathbf{0}\neq\mathbf{m}\in\mathbb{Z}^{d}}\left(
\int_{SO\left(  d\right)  }\left\vert \widehat{\chi}_{B}\left(  R\sigma
\mathbf{m}\right)  \right\vert ^{2}d\sigma\right)  ^{q/2}\\
&  \leqslant cR^{q\frac{d-1}{2}}\sum_{\mathbf{0}\neq\mathbf{m}\in
\mathbb{Z}^{d}}\left\vert \mathbf{m}\right\vert ^{-q\frac{d+1}{2}}\leqslant
cR^{q\frac{d-1}{2}}.
\end{align*}

In a similar way if $1\leqslant p\leqslant2$ we have,%
\begin{align*}
\int_{SO\left(  d\right)  }\left(  \int_{\mathbb{T}^{d}}\left\vert
D_{R,\sigma}\left(  \mathbf{z}\right)  \right\vert ^{p}d\mathbf{z}\right)
^{2/p}d\sigma &  \leqslant\int_{SO\left(  d\right)  }\int_{\mathbb{T}^{d}%
}\left\vert D_{R,\sigma}\left(  \mathbf{z}\right)  \right\vert ^{2}%
d\mathbf{z}d\sigma\\
&  =R^{2d}\sum_{\mathbf{0}\neq\mathbf{m}\in\mathbb{Z}^{d}}\int_{SO\left(
d\right)  }\left\vert \widehat{\chi}_{B}\left(  R\sigma\mathbf{m}\right)
\right\vert ^{2}d\sigma\\
&  \leqslant cR^{d-1}\sum_{\mathbf{0}\neq\mathbf{m}\in\mathbb{Z}^{d}%
}\left\vert \mathbf{m}\right\vert ^{-\left(  d+1\right)  }\leqslant cR^{d-1}.
\end{align*}

\end{proof}

\bigskip

\begin{proof}
[Proof of Theorem \ref{Theorem D}]Without loss of generality we can assume
$\left\vert \alpha\right\vert \leqslant\left\vert \beta\right\vert $. By
Proposition \ref{Prop Stima chi} we have%
\[
\left\vert \widehat{\chi}_{B}\left(  m,n\right)  \right\vert \leqslant\left\{
\begin{array}
[c]{l}%
c\left\vert \alpha m+\beta n\right\vert ^{-1-\frac{1}{\gamma}},\\
c\left\vert -\beta m+\alpha n\right\vert ^{-\frac{\gamma-2}{2\left(
\gamma-1\right)  }}\left\vert \alpha m+\beta n\right\vert ^{-\frac{1}{2\left(
\gamma-1\right)  }-1},\\
c\left\vert -\beta m+\alpha n\right\vert ^{-\frac{3}{2}}.
\end{array}
\right.
\]
We have%
\begin{align*}
\int_{\mathbb{T}^{d}}\left\vert D_{R}\left(  \mathbf{z}\right)  \right\vert
^{2}d\mathbf{z}  &  =R^{4}\sum_{\left(  m,n\right)  \neq\left(  0,0\right)
}\left\vert \widehat{\chi}_{B}\left(  Rm,Rn\right)  \right\vert ^{2}\\
&  \leqslant R^{4}\sum_{0<\left\vert -\beta m+\alpha n\right\vert
<1/2}\left\vert \widehat{\chi}_{B}\left(  Rm,Rn\right)  \right\vert ^{2}\\
&  ~~~~+R^{4}\sum_{1/2\leqslant\left\vert -\beta m+\alpha n\right\vert
\leqslant\left\vert \alpha m+\beta n\right\vert }\left\vert \widehat{\chi}%
_{B}\left(  Rm,Rn\right)  \right\vert ^{2}\\
&  ~~~~+R^{4}\sum_{0<\left\vert \alpha m+\beta n\right\vert <\left\vert -\beta
m+\alpha n\right\vert }\left\vert \widehat{\chi}_{B}\left(  Rm,Rn\right)
\right\vert ^{2}\\
&  =I+II+III.
\end{align*}
Using the above estimate for $\widehat{\chi}_{B}\left(  Rm,Rn\right)  $ we
have%
\[
III\leqslant cR\sum_{0<\left\vert \alpha m+\beta n\right\vert <\left\vert
-\beta m+\alpha n\right\vert }\left\vert \left(  m,n\right)  \right\vert
^{-3}\leqslant cR\sum_{\left(  m,n\right)  \neq\left(  0,0\right)  }\left\vert
\left(  m,n\right)  \right\vert ^{-3}\leqslant cR.
\]
In the term $II$ the quantity $\left\vert -\beta m+\alpha n\right\vert $ and
$\left\vert \alpha m+\beta n\right\vert $ are bounded away from zero so that,
arguing as in the proof of Lemma \ref{Lemma Stime Discrep}, we can replace the
series with the corresponding integral,%
\begin{align*}
II  &  \leqslant cR\sum_{1/2\leqslant\left\vert -\beta m+\alpha n\right\vert
<\left\vert \alpha m+\beta n\right\vert }\left\vert -\beta m+\alpha
n\right\vert ^{-\frac{\gamma-2}{\left(  \gamma-1\right)  }}\left\vert \alpha
m+\beta n\right\vert ^{-\frac{1}{\left(  \gamma-1\right)  }-2}\\
&  \leqslant cR\int_{\left\{  1/2\leqslant\left\vert \xi\right\vert
<\left\vert s\right\vert \right\}  }\left\vert \xi\right\vert ^{-\frac
{\gamma-2}{\left(  \gamma-1\right)  }}\left\vert s\right\vert ^{-\frac
{1}{\left(  \gamma-1\right)  }-2}dsd\xi\leqslant cR.
\end{align*}
In the term $I$ observe that $\left\vert -\beta m+\alpha n\right\vert <1/2$
implies $\left\vert \alpha m+\beta n\right\vert \approx\left\vert n\right\vert
$. Then%
\begin{align*}
I  &  \leqslant cR\sum_{\left\vert -\beta m+\alpha n\right\vert <1/2}%
\left\vert -\beta m+\alpha n\right\vert ^{-\frac{\gamma-2}{\gamma-1}%
}\left\vert \alpha m+\beta n\right\vert ^{-\frac{1}{\gamma-1}-2}\\
&  \leqslant cR\sum_{\left\vert -\beta m+\alpha n\right\vert <1/2}\left\Vert
\frac{\alpha}{\beta}n\right\Vert ^{-\frac{\gamma-2}{\gamma-1}}\left\vert
n\right\vert ^{-\frac{1}{\gamma-1}-2}\\
&  \leqslant cR\sum_{\left\vert -\beta m+\alpha n\right\vert <1/2}\left(
\left\vert n\right\vert ^{-1-\delta}\right)  ^{-\frac{\gamma-2}{\gamma-1}%
}\left\vert n\right\vert ^{-\frac{1}{\gamma-1}-2}\\
&  \leqslant cR\sum_{n=1}^{+\infty}n^{\left(  1+\delta\right)  \frac{\gamma
-2}{\gamma-1}}n^{-\frac{1}{\gamma-1}-2}\leqslant cR.
\end{align*}
In the last inequality we used the assumption $\delta<2/(\gamma-2).$
\end{proof}

\end{document}